\newtheorem{definition}{Definition}
\newtheorem{theorem}{Theorem}
\newtheorem{corollary}{Corollary}
\newtheorem{lemma}{Lemma}
\begin{document}

\begin{frontmatter}

\title{Asymptotic Properties of Lasso+mLS and Lasso+Ridge in Sparse High-dimensional Linear Regression}
\runtitle{Lasso+mLS and Lasso+Ridge in SHLR}


\author{\fnms{Hanzhong} \snm{Liu}\ead[label=e1]{lhz2009@pku.edu.cn}}
\address{School of Mathematical Sciences, \\ Peking University, Beijing 100871, P.R. China \\ \printead{e1}}
\and
\author{\fnms{Bin} \snm{Yu}\ead[label=e2]{binyu@stat.berkeley.edu}}
\address{Department of Statistics, \\ Department of Electrical Engineering and Computer Sciences, \\ University of California, Berkeley, CA 94720, USA \\ \printead{e2}}

\runauthor{Hanzhong Liu and Bin Yu}

\begin{abstract}

We study the asymptotic properties of Lasso+mLS and Lasso+Ridge under the sparse high-dimensional linear regression model: Lasso selecting predictors and then modified Least Squares (mLS) or Ridge estimating their coefficients. First, we propose a valid inference procedure for parameter estimation based on parametric residual bootstrap after Lasso+mLS and Lasso+Ridge. Second, we derive the asymptotic unbiasedness of Lasso+mLS and Lasso+Ridge. More specifically, we show that their biases decay at an exponential rate and they can achieve the oracle convergence rate of $s/n$ (where $s$ is the number of nonzero regression coefficients and $n$ is the sample size) for mean squared error (MSE). Third, we show that Lasso+mLS and Lasso+Ridge are asymptotically normal. They have an oracle property in the sense that they can select the true predictors with probability converging to $1$ and the estimates of nonzero parameters have the same asymptotic normal distribution that they would have if the zero parameters were known in advance. In fact, our analysis is not limited to adopting Lasso in the selection stage, but is applicable to any other model selection criteria with exponentially decay rates of the probability of selecting wrong models.

\end{abstract}

\begin{keyword}[class=MSC]
\kwd[Primary ]{62F12}
\kwd{62F40}
\kwd[; secondary ]{62J07}
\end{keyword}

\begin{keyword}
\kwd{Lasso}
\kwd{Irrepresentable Condition}
\kwd{Lasso+mLS and Lasso+Ridge}
\kwd{Sparsity}
\kwd{Asymptotic unbiasedness}
\kwd{Asymptotic Normality}
\kwd{Residual Bootstrap}
\end{keyword}

\received{\smonth{6} \syear{2013}}

\end{frontmatter}

\section{Introduction}

Consider the sparse linear regression model
\begin{equation}
 Y = X \beta^* +\epsilon,
 \label{eqn:lrm}
\end{equation}
where $\epsilon = (\epsilon_1,...,\epsilon_n)^T$ is a vector of independent and identically distributed (i.i.d.) random variables with mean 0 and variance $\sigma^2$. $Y=(y_1,...,y_n)^T \in \mathbb R^n$ is the response vector, and $X \in \mathbb R^{n \times p}$ is the design matrix which is deterministic. $\beta^*\in \mathbb R^p$ is the vector of model coefficients with at most $s$ $(s<n)$ non-zero components. We consider the high-dimensional setting which allows $p$ and $s$ to grow with $n$ ($p$ can be comparable to or larger than $n$). Note that, in here and what follows, $Y$, $X$, and $\beta^*$ are all indexed by the sample size $n$, but we omit the index whenever this does not cause confusion.

In sparse linear regression models, an active line of research focuses on the recovery of sparse vector $\beta^*$ by a popular $l_1$ regularization method called Lasso \cite{Tibshirani1996}. The Lasso has been studied under at least three common criteria: (i) model selection criteria, meaning the correct recovery of the support set $S=\{ j \in \{1,2,...,p\}: \beta^*_j \neq 0\}$ of the model coefficients $\beta^*$; (ii) $l_q$ estimation errors $||\hat \beta - \beta^*||_q^q$, especially $l_2$ and $l_1$, where $\hat \beta$ is the estimate of $\beta^*$; and (iii) prediction error $||X\hat \beta - X\beta||_2^2$.

The Lasso estimator is defined by
\begin{equation}
 \hat \beta(\lambda_n) = \mathop {argmin}\limits_{\beta} \left\{  || {Y - X\beta } ||_2^2  + \lambda_n || {\beta } ||_1 \right\},
 \label{eqn:lasso}
\end{equation}
where $\lambda_n \geq 0$ is the tuning parameter which controls the amount of regularization applied to the estimate. Setting $\lambda_n =0$ reverses the Lasso problem to Ordinary Least Squares (OLS) which minimizes the unregularized empirical loss.

Replacing $\ell_1$ penalty by $\ell_2$ penalty in (\ref{eqn:lasso}) gives the Ridge estimator \cite{Hoerl1970}:
\begin{equation}
 \hat \beta_{Ridge}(\lambda_n) = \mathop {argmin}\limits_{\beta} \left\{  || {Y - X\beta } ||_2^2  + \lambda_n || {\beta } ||_2^2 \right\},
 \label{eqn:Ridge}
\end{equation}

The Lasso estimator has two nice properties, namely, (i) it generates sparse models by means of $\ell_1$ regularization and (ii) it is also computationally feasible (see \cite{OsbornePresnell2000,EfronHastieTibshirani2004,FriedmanHastieHolfing2007}). The asymptotic behavior of Lasso-type estimators has been studied by \cite{KnightFu2000} for fixed $p$ and $\beta^*$ as $n\rightarrow \infty$. In particular, they have shown that under some regularity conditions on the design, $\lambda_n=o(n)$ is sufficient for consistency in the sense that $\hat \beta(\lambda_n) \rightarrow_p \beta^*$, and $\lambda_n$ should grow more slowly (i.e. $\lambda_n=O(\sqrt{n})$) for asymptotic normality of the Lasso estimator. On the model selection consistency front, \cite{MeinshausenBuhlmann2006} proposed the neighborhood stability condition which is equivalent to the Irrepresentable condition \cite{Fuchs2005,Tropp,ZhaoYu2006,Wainwright2009} to prove the Lasso consistency for Gaussian graphical model selection. \cite{ZhaoYu2006} showed that the Irrepresentable condition is almost necessary (for fixed $p$) and sufficient for the Lasso to select the true model both in the classical fixed $p$ setting and in the high-dimensional setting. \cite{ZhangHuang2008} considered a weaker sparsity assumption, meaning that the regression coefficients outside an ideal model are small but not necessarily zero (the sum of their absolute values is of the order $O(s\lambda_n/n)$), and imposed the sparse Riesz condition to prove the rate-consistent in terms of the sparsity, bias and the norm of missing large coefficients. \cite{Wainwright2009} further established precise conditions on the scalings of $(n,p,s)$ that are necessary and sufficient for sparsity pattern recovery using the Lasso. In addition, thresholded Lasso and Dantzig estimators were introduced in \cite{Karim2008} and the authors proved their model selection consistency under less restrictive conditions on the decay rates of the nonzero regression coefficients. Other related work includes \cite{DonohoElad2000,Zou2006,Bunea2008,Adel2013}. All the aforementioned papers imposed suitable mutual incoherence conditions on the design. On the $l_2$ estimation error front, the Lasso has been shown under a weaker restricted eigenvalue condition to achieve $l_2$ convergence rate of $(s\log p) / n$ \cite{vandeGeer2007,BickelRitov2009,MeinshausenandYu2009,unifiedpaper2009,unifiedpaper2009journal}, which is the minimax optimal rate \cite{Raskutti2011}. Other work focuses on the convergence rates of $||X\hat \beta(\lambda_n)- X\beta^*||_2^2$ and $||\hat \beta(\lambda_n) - \beta^*||_1$, see \cite{Greenshtein2004,vandeGeerprediction,Bunea2009} for example.

However, even if $p$ is fixed and the Irrepresentable Condition is satisfied, there does not exist a tuning parameter $\lambda_n$ which can lead to both variable selection consistency and asymptotic normality \cite{FanLi2001,Zou2006}. More importantly, for the case of $p\gg n$, statistical inference for the Lasso estimator with theoretical guarantees is still an insufficiently explored area.

The bootstrap is very useful for inference. For fixed $p$, \cite{Minnier2009} developed a perturbation resampling-based method to approximate the distribution of a general class of penalized regression estimates. \cite{Chatterjee2011} proposed a modified residual bootstrapping Lasso method that is consistent in estimating the limiting distribution of the Lasso estimator. \cite{cuihuizhang2011} developed a low-dimensional projection (LDP) approach to constructing confidence intervals. Though LDP works for $(s \log{p})/\sqrt{n}\rightarrow 0$, it has nothing to do with the idea of resampling and bootstrap. Does the bootstrap provide a valid approximation in the case of $p\gg n$? In this paper, we will give an affirmative answer to this question based on residual bootstrap after two post-Lasso estimators: Lasso+mLS and Lasso+Ridge. Our method provides consistent estimate of the limiting distribution of Lasso+mLS (or Lasso+Ridge) even if $p$ grows at an exponential rate in $n$.

Post-Lasso estimator is a special case of two stage estimators: (1) selection stage: one selects predictors using the Lasso; and (2) estimation stage: modified Least Square (mLS) or Ridge, is applied to estimate the coefficients of the selected predictors. Our estimator is referred to as Lasso+mLS or Lasso+Ridge. Lasso+mLS is very close to Lasso+OLS \cite{Belloni2009}, which uses Ordinary Least Squares (OLS) in the second stage. Several authors have previously considered two stage estimators to improve the performance of the Lasso, such as the Lars-OLS hybrid \cite{EfronHastieTibshirani2004}, adaptive Lasso \cite{Zou2006}, relaxed Lasso \cite{Meinshausen2007}, and marginal bridge estimator \cite{Huang2008}, to name just a few.

{\bf Our contributions} are summarized as follows:

\begin{enumerate}
  \item We propose a valid inference procedure for parameter estimation based on parametric residual bootstrap after two post-Lasso estimators: Lasso+mLS and Lasso+Ridge. More specifically, we show that the Mallows distance between the distributions of the bootstrap estimator and the Lasso+mLS (or Lasso+Ridge) estimator converges to 0 in probability.
  \item Under the Irrepresentable condition and other regularity conditions, we derive the asymptotic unbiasedness of Lasso+mLS and Lasso+Ridge. We show that their biases decay at an exponential rate and that they can achieve the oracle convergence rate of $s/n$ for mean squared error $E||\tilde \beta- \beta^*||_2^2$ where $\tilde \beta$ is either Lasso+mLS or Lasso+Ridge.
  \item We prove the asymptotic normality of Lasso+mLS and Lasso+Ridge. As we show in Theorem~\ref{theorem: asymptotic normality} and Corollary~\ref{corollary: asymptotic normality}, these two post-Lasso estimators display an oracle property that the Lasso does not have: they can select the true predictors with probability converging to 1 and the estimates of nonzero parameters have the same asymptotic normal distribution that they would have if the zero parameters were known in advance.
  \item Our analysis is not limited to adopting the Lasso in the selection stage, but is applicable to any other model selection criteria with exponentially decay rates of the probability of selecting wrong models, for example, stability selection \cite{MeinshausenBuhlmann2010}, SCAD \cite{FanLi2001,SCAD2009} and Dantzig selector \cite{CandesTao2007,BickelRitov2009,Dantiz2013}.
\end{enumerate}

Our key assumptions for the validity of residual bootstrap after Lasso+mLS or Lasso+Ridge are the Irrepresentable condition and that $s$ goes to infinity slower than $\sqrt{n}$. The Irrepresentable condition can be weakened by the sparse eigenvalue condition if we adopt stability selection \cite{MeinshausenBuhlmann2010} to enhance the selection performance of the Lasso. Without considering model selection, \cite{BickelFreedman1983} showed that residual bootstrap OLS fails if $p^2/n$ does not tend to 0. Therefore, the condition $s^2/n \rightarrow 0$ cannot be weakened. Our conditions on the scalings $(n,p,s)$ are not the sharpest but have been previously used in the literature \cite{ZhaoYu2006} and make our convergence rate more explicit. In addition, we require a gap of size $n^{\frac{c_3}{2}}$ ($c_3 \in (0,1]$ is a constant) between the decay rate of $\beta^*$ and $n^{-\frac{1}{2}}$ which prevents the estimation from being dominated by the noise terms. \cite{FanLv2008} proposed a similar constraint $\mathop {min}\limits_{1\leq i \leq s}|\beta_i^*| \geq \frac{M}{n^\kappa},\ 0 \leq \kappa < \frac{1}{2}$ to show model selection consistency of the Sure Independent Screening. This assumption is weaker than $\mathop {min}\limits_{1\leq i \leq s}|\beta_i^*| \geq M$, which was assumed by \cite{Huang2008} in connection with the asymptotic properties of the Bridge estimator. However, as mentioned by \cite{Leeb2005}, inference results based on post-model selection methods can be misleading when this kind of ``beta min" condition fails. Therefore, the proposed inference procedure should be used in practice only when there is believed to be a gap between the decay rate of the nonzero elements of $\beta^*$ and the $n^{-1/2}$. Finally, we need some regularity conditions (conditions (a)-(c) in Section 2) which are standard in sparse high-dimensional linear regression literature \cite{ZhaoYu2006,Huang2008,HuangZhang2008}.

After we had obtained our bootstrap results in Theorem~\ref{theorem:bootstraplassovalid} and Corollary~\ref{corollary:bootstraplassovalid}, our attention was brought to an independent result in \cite{Chatterjee2012} where a variant of the Irrepresentable condition was used to prove the second-order correctness of the residual bootstrap applied to a suitable studentized version of the adaptive Lasso estimator. However, the main results of \cite{Chatterjee2012} are valid only for linear combinations of the adaptive Lasso estimator while our results hold for the joint distribution of Lasso+mLS (or Lasso+Ridge). The distance between distributions used in \cite{Chatterjee2012} and the proof there are also different from ours. Specifically, \cite{Chatterjee2012} adopted the total variation distance and used the Edgeworth expansion in the proof while we study the Mallows distance and our proof is direct. In addition, \cite{Chatterjee2012} allows $p$ to grow only at polynomial rates in $n$ while we allow $p$ to grow at an exponential rate in $n$.

In our paper, before stating the bootstrap result, we first derive the asymptotic unbiasedness and asymptotic normality of Lasso+mLS and Lasso+Ridge. It is well known that $(s\log p)/n$ is the minimax optimal rate for the Lasso under the restricted eigenvalue condition. Since we assume the stronger Irrepresentable condition and some conditions on scaling $(n,p,s)$, we are able to attain a better rate of $s/n$ for MSE which indicates that one can avoid the feature selection penalty of $\log p$ by combining the Lasso and Least Squares or Ridge. We should mention that previous work \cite{Belloni2009} has obtained $l_2$ convergence rate $(||\tilde \beta_{Lasso+OLS} - \beta^*||_2^2=O_p(s/n))$ of Lasso+OLS estimator under weaker conditions. However, their results hold in probability and it is not clear whether Lasso+OLS can achieve the oracle convergence rate of $O(s/n)$ in $L_2$-expectation, i.e., whether $E||\tilde \beta - \beta^*||_2^2=O(s/n)$ holds, which we need to prove the validity of residual bootstrap. On the asymptotic normality front, the authors in \cite{Belloni2011a,Belloni2011b} also adopted the OLS after model selection and derived the asymptotic normality for inference on the effect of a treatment variable on a scalar outcome in the presence of very many controls. However, they studied a partial linear model which is different from ours and the $l_1$ regularization was imposed on the effect of the control variables without on the effect of the treatment variable.

{\bf Notation} For any vector $a=(a_1,...,a_m)^T$, we denote $||a||_2^2 = \sum_{i=1}^{m} a_i^2$, $||a||_1 = \sum_{i=1}^{m} |a_i|$, and $\|a\|_\infty = \max_{i=1,\ldots,m} |a_i|$. For a vector $\beta \in R^p$ and a set $S \subset \{1,...,p\}$, denote $S^c$ the complementary set of $S$ and let $\beta_S = \{\beta_j: j \in S \}$. Given an $n$ by $p$ matrix $X$, write $x_i^T \in R^p,i=1,...,n$ and $X_j \in R^n,j=1,...,p$ the $i$-th row and the $j$-th column of $X$ respectively, where $x_i^T$ is the transpose of $x_i$. For a given $m\times m$ matrix $A$, let $\Lambda_{min} (A)$ and $\Lambda_{max} (A)$ denote the smallest and largest eigenvalues of $A$ respectively. Write $tr(A)$ the trace of $A$ which is the sum of the diagonal entries of $A$.

The rest of the paper is organized as follows: in Section 2, we define modified Least Squares and Ridge after model selection and study their asymptotic properties. In Section 3, we apply these general properties to the special cases of Lasso+mLS and Lasso+Ridge and then derive their asymptotic unbiasedness, asymptotic normality and the approximation property of residual bootstrap. Similar asymptotic properties of modified Least Squares and Ridge after stability selection are obtained in Section 4. Simulation examples are given in Section 5. We conclude in Section 6. The proofs can be found in the Appendix.

\section{Asymptotic Properties of Modified Least Squares and Ridge after Model Selection}

In this section, we begin with a precise definition of the modified Least Squares or Ridge after model selection, and then study their asymptotic properties, including asymptotic unbiasedness, asymptotic normality and the validity of residual bootstrap.

\subsection{Definitions and Assumptions}

Modified Least Squares (or Ridge) after model selection refers to a special type of two stage estimators. In the first stage, one uses certain model selection methods to select predictors. For example, let $\hat \beta$ be the Lasso estimator defined in $\eqref{eqn:lasso}$, one gets a set of selected predictors $\hat S = \{ j \in \{1,2,...,p\}:\ \hat \beta_j \neq 0 \}$. Again, $\hat \beta$ and $\hat S$ are dependent on $\lambda_n$, but we omit the dependence whenever this does not cause confusion.

In the second stage, a low-dimensional estimation method is applied to the selected predictors in $\hat S$. For example, one can adopt OLS and then form the OLS after model selection (denoted by Select+OLS):
\begin{equation}
 \tilde \beta_{Select+OLS} = \mathop {argmin}\limits_{\beta:\ \beta_j = 0,\ j \in \hat S^c}  || {Y - X\beta } ||_2^2.
 \label{eqn:ols}
\end{equation}
The solution of $\eqref{eqn:ols}$ is $\tilde \beta_{Select+OLS,\hat S} = (X_{\hat S}^TX_{\hat S})^{-1}X_{\hat S}^TY$ if $X_{\hat S}^TX_{\hat S}$ is invertible. When $X_{\hat S}^TX_{\hat S}$ is not invertible, the solution of $\eqref{eqn:ols}$ is not unique. In this case one can use the generalized inverse. However, the generalized inverse is not stable when the smallest nonzero eigenvalue of $X_{\hat S}^TX_{\hat S}$ approximately equals 0, which may result in poor performance. We propose a modified Least Squares method in the second stage and form our Select+mLS estimator. Let $d = |\hat S|$ and write $\frac{1}{\sqrt{n}} X_{\hat S}$ in its singular value decomposition (SVD) form
\begin{equation}
 \frac{1}{\sqrt{n}} X_{\hat S} = U D V^T
 \label{eqn:SVD}
\end{equation}
where $U$ is an $n\times n$ orthogonal matrix, $D$ is an $n\times d$ diagonal matrix with singular values $\lambda_1 \geq \lambda_2 \geq ... \geq \lambda_d$ on the diagonal, and $V^T$ (the transpose of $V$) is a $d\times d$ orthogonal matrix. By simple algebraic operations, OLS based on $( X_{\hat S},Y)$ has the following form:
\begin{equation}
 \tilde \beta_{OLS} = \frac{1}{\sqrt{n}} V D^{-1} U^T Y
 \label{eqn:ols2}
\end{equation}
where $D^{-1}$ is a $d\times n$ diagonal matrix with diagonal entries $\lambda_1^{-1}$, $\lambda_2^{-1}$,...,$\lambda_d^{-1}$. If one or more of the singular values are 0, one can utilize generalized inverse which just takes $\lambda_k^{-1}=0$ for all zero-valued $\lambda_k$.

We propose a hard thresholding on the singular values, that is, shrinking those singular values smaller than $\tau_n$ $(\tau_n>0)$ to zero. Then define a modified Least Square estimator $\tilde \beta_{mLS}(\tau_n)$ in the same form of \eqref{eqn:ols2} except that we take $\lambda_k^{-1}=0$ for all $\lambda_k< \tau_n$. This estimator is similar to principal components regression \cite{Massy1965}. Note that if $0\leq \tau_n^2 \leq \Lambda_{min}(\frac{1}{n}X_{\hat S}^TX_{\hat S})$, $\tilde \beta_{mLS}(\tau_n)$ is the same as $\tilde \beta_{OLS}$, that is,
\[ \tilde \beta_{mLS}(\tau_n) =  \tilde \beta_{OLS} = (X_{\hat S}^TX_{\hat S})^{-1}X_{\hat S}^TY, \ \  if \ \  0\leq \tau_n^2 \leq \Lambda_{min}(\frac{1}{n}X_{\hat S}^TX_{\hat S}). \]

Our final modified Least Squares after model selection (Select+mLS) is defined by:
\begin{equation}
 \tilde \beta_{Select+mLS,\hat S}(\tau_n) = \tilde \beta_{mLS}(\tau_n)\ , \ \  \tilde \beta_{Select+mLS,\hat S^c}(\tau_n)=0.
 \label{eqn:modifiedols}
\end{equation}

One can also use Ridge in the second stage and form the Ridge after model selection (Select+Ridge):
\begin{equation}
 \tilde \beta_{Select+Ridge}(\mu_n) = \mathop {argmin}\limits_{\beta:\ \beta_j = 0,\ j \in \hat S^c}  || {Y - X\beta } ||_2^2 + \mu_n ||\beta||_2^2
 \label{eqn:ridge}
\end{equation}
where $\mu_n \geq 0$ is a smoothing parameter. \eqref{eqn:ridge} is equivalent to
\begin{equation}
 \tilde \beta_{Select+Ridge,\hat S}(\mu_n) =(X_{\hat S}^TX_{\hat S}+ \mu_n I)^{-1}X_{\hat S}^TY\ , \ \  \tilde \beta_{Select+Ridge,\hat S^c}(\mu_n)=0.
 \label{eqn:lassoridge}
\end{equation}

When the Lasso is used in the selection stage, we refer to our final estimators as Lasso+mLS and Lasso+Ridge respectively. $\tau_n$ and $\mu_n$ are tuning parameters. In our theorems and simulation, $\tau_n \propto \frac{1}{n}$ and $\mu_n \propto \frac{1}{n}$ can get good estimation and prediction performance. For the sake of notational simplicity, we omit the dependence of estimators on $\lambda_n$ and $\tau_n$ or $\mu_n$ whenever this does not cause confusion.

To state our main theorems, we need the following assumptions. Without loss of generality, assume $\beta^*=(\beta^*_1,...,\beta^*_s,\beta^*_{s+1},...,\beta^*_p)$ with $\beta^*_j\neq 0$ for $j=1,...,s$ and $\beta^*_j = 0$ for $j=s+1,...,p$. Let $S=\{1,...,s\}$ and $\beta^*_S = (\beta^*_1,...,\beta^*_s)$. Now write $X_S$ and $X_{S^c}$ as the first $s$ and the last $p-s$ columns of $X$ respectively and let $C=\frac{1}{n} X^TX$ which can be expressed in a block-wise form as follows:
\begin{equation}
\label{eqn:cform}
C = \left(
  \begin{array}{cc}
    C_{11} & C_{12} \\
    C_{21} & C_{22} \\
  \end{array}
\right)
\end{equation}
where $C_{11}=\frac{1}{n}X_S^TX_S$, $C_{12}=\frac{1}{n}X_S^TX_{S^c}$, $C_{21}=\frac{1}{n}X_{S^c}^TX_S$ and $C_{22}=\frac{1}{n}X_{S^c}^TX_{S^c}$.
\ \\

{\em Assumption (a): $\epsilon_i$ are i.i.d. gaussian random variables with mean 0 and variance $\sigma^2$.}

{\em Assumption (b): Suppose that the predictors are standardized, i.e.
\begin{equation}
 \frac{1}{n} \sum\limits_{i=1}^{n} {x_{ij}} = 0 \ and \ \frac{1}{n} \sum\limits_{i=1}^{n} {x_{ij}^2}=1, \ j=1,...,p.
 \label{eqn:standardized}
\end{equation}
}

{\em Assumption (c): there exists an constant $\Lambda_{min}>0$ such that
\begin{equation}
 \Lambda_{min} (C_{11})  \geq \Lambda_{min}.
 \label{eqn:cond2}
\end{equation}
}

{\em Assumption (d): the model is high-dimensional and sparse, i.e. there exists $0\leq c_1<1$ and $0<c_2<1-c_1$ such that
\begin{equation}
 s=s_n=O(n^{c_1}) \ , \ p=p_n = O(e^{n^{c_2}}).
 \label{eqn:cond3}
\end{equation}
}

{\em Assumption (e)\footnote{In fact, Theorem~\ref{theorem: asymptotic normality} (asymptotic normality) and Theorem~\ref{theorem:bootstraplassovalid} (bootstrap) are valid for any $\mu_n \rightarrow 0$ with rate neither faster than $e^{-n^{c_2}/4}$ nor slower than $\frac{1}{n}$.}: $\tau_n \propto \frac{1}{n}$ and $\mu_n \propto \frac{1}{n}$.
}
\ \\

The gaussian assumption (a) is fairly standard in the literature. Assumption (c) ensures the smallest eigenvalue of $C_{11}$ is bounded away from 0 so that its inverse behaves well. (a)-(c) are typical assumptions in sparse linear regression literature, see for example \cite{ZhaoYu2006,Huang2008,HuangZhang2008}. Assumption (d) means that the number of relevant predictors $s$ is allowed to diverge but much slower than $n$, and that the number of predictors $p$ can grow faster than $n$ (up to exponentially fast), which is standard in almost all high-dimensional inference literature. Though this assumption is stronger than the typical one $\frac{s \log p}{n} \rightarrow 0$, it has been used previously \cite{ZhaoYu2006}.

In the following subsections, we will show that both Select+mLS and Select+Ridge have good asymptotic properties if the probability of selecting wrong models $P(\hat S \neq S)$ decays fast, say $o(e^{-n^{\kappa}})$ (where $\kappa>0$ is a constant).

\subsection{Bias and MSE of Select+mLS and Select+Ridge}

In a high-dimensional setting, bias is not the only consideration of estimates because of the bias and variance trade-off. Regularization has been a popular technique for model fitting which results in a biased estimator but decreases the mean squared error (MSE) dramatically. However, if two estimators have the same MSE, we prefer the unbiased one. The following Theorems~\ref{theorem: asymptotic bias and variance} and \ref{theorem: asymptotic bias and variance ridge} provide general bounds for the bias and MSE of the Select+mLS and Select+Ridge.

\begin{theorem}
\label{theorem: asymptotic bias and variance}
Suppose that Gaussian assumption (a) is satisfied and $\tau_n^2 \leq \Lambda_{min}(C_{11})$, then the bias and the MSE of $\tilde \beta_{Select+mLS}(\tau_n)$ satisfy
\begin{eqnarray}
 &  & || E \tilde \beta_{Select+mLS}(\tau_n) - \beta^*||_2^2  \nonumber \\
 & \leq & 2 P(\hat S \neq S)  \left\{\frac{\sigma^2}{n} tr(C_{11}^{-1}) + ||\beta^*||_2^2 + \frac{1}{\tau_n^2} \frac{1}{n}||X\beta^*||_2^2 + \frac{1}{\tau_n^2} \sigma^2 \right\},
 \label{eqn:theo1}
\end{eqnarray}
\begin{eqnarray}
 & & E||\tilde \beta_{Select+mLS}(\tau_n) - \beta^*||_2^2 \nonumber \\
 & \leq & \frac{\sigma^2}{n} tr(C_{11}^{-1}) + 8 \sqrt{ P(\hat S \neq S) }  \left\{ ||\beta^*||_2^2 + \frac{1}{\tau_n^2}\frac{1}{n}||X\beta^*||_2^2  + \frac{1}{\tau_n^2}\sigma^2 \right\}.
 \label{eqn:theo2}
\end{eqnarray}
\end{theorem}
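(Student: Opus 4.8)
The plan is to condition on the event $A=\{\hat S=S\}$ that the selection stage recovers the true support, so that $A^{c}=\{\hat S\neq S\}$ has probability $P(\hat S\neq S)$. Introduce the oracle least squares estimator $\tilde\beta^{o}$ with $\tilde\beta^{o}_{S}=(X_S^TX_S)^{-1}X_S^TY$ and $\tilde\beta^{o}_{S^c}=0$; this is well defined because the hypothesis $\tau_n^2\le\Lambda_{min}(C_{11})$ with $\tau_n>0$ forces $\Lambda_{min}(C_{11})>0$, hence $X_S^TX_S$ invertible. The crucial structural fact is that \emph{on the event $A$} one has $\tilde\beta_{Select+mLS}(\tau_n)=\tilde\beta^{o}$: when $\hat S=S$ every singular value of $\frac{1}{\sqrt n}X_S$ is at least $\sqrt{\Lambda_{min}(C_{11})}\ge\tau_n$, so the hard-thresholding step in \eqref{eqn:SVD}--\eqref{eqn:modifiedols} does nothing and the modified least squares reduces to ordinary least squares on $X_S$. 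Since $\beta^*_{S^c}=0$ gives $X\beta^*=X_S\beta^*_S$, the oracle error $\tilde\beta^{o}-\beta^*$ is supported on $S$ and equals $(X_S^TX_S)^{-1}X_S^T\epsilon$ there, whence (using $E\epsilon=0$ and each $\epsilon_i$ of variance $\sigma^2$) the three identities $E\tilde\beta^{o}=\beta^*$, $E\|\tilde\beta^{o}-\beta^*\|_2^2=\sigma^2\,tr((X_S^TX_S)^{-1})=\frac{\sigma^2}{n}tr(C_{11}^{-1})$, and $E\|\tilde\beta^{o}\|_2^2=\|\beta^*\|_2^2+\frac{\sigma^2}{n}tr(C_{11}^{-1})$. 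I also record the deterministic a priori bound $\|\tilde\beta_{Select+mLS}(\tau_n)\|_2^2\le\frac{1}{n\tau_n^2}\|Y\|_2^2$, which follows from the SVD form \eqref{eqn:ols2} of the estimator, the orthogonality of $U$ and $V$, and the fact that every nonzero thresholded singular value exceeds $\tau_n$, together with the elementary moments $E\|Y\|_2^2=\|X\beta^*\|_2^2+n\sigma^2$ and $E\|\epsilon\|_2^4\le 3n^2\sigma^4$.

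For \eqref{eqn:theo1}, since $\tilde\beta_{Select+mLS}(\tau_n)=\tilde\beta^{o}$ on $A$ and $E\tilde\beta^{o}=\beta^*$, I would write
\[
E\tilde\beta_{Select+mLS}(\tau_n)-\beta^*=E\!\left[\tilde\beta_{Select+mLS}(\tau_n)\,\mathbf{1}_{A^{c}}\right]-E\!\left[\tilde\beta^{o}\,\mathbf{1}_{A^{c}}\right],
\]
so that $\|E\tilde\beta_{Select+mLS}(\tau_n)-\beta^*\|_2^2\le 2\|E[\tilde\beta_{Select+mLS}(\tau_n)\mathbf{1}_{A^{c}}]\|_2^2+2\|E[\tilde\beta^{o}\mathbf{1}_{A^{c}}]\|_2^2$. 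Applying Jensen's inequality $\|E[Z\mathbf{1}_{A^{c}}]\|_2\le E[\|Z\|_2\mathbf{1}_{A^{c}}]$ followed by Cauchy--Schwarz $E[\|Z\|_2\mathbf{1}_{A^{c}}]\le(E\|Z\|_2^2)^{1/2}P(A^{c})^{1/2}$ to each of $Z=\tilde\beta_{Select+mLS}(\tau_n)$ and $Z=\tilde\beta^{o}$, and inserting the a priori bound (with $E\|Y\|_2^2=\|X\beta^*\|_2^2+n\sigma^2$) and the value of $E\|\tilde\beta^{o}\|_2^2$ above, yields exactly \eqref{eqn:theo1}.

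For \eqref{eqn:theo2}, I would split
\[
E\|\tilde\beta_{Select+mLS}(\tau_n)-\beta^*\|_2^2=E\!\left[\|\tilde\beta^{o}-\beta^*\|_2^2\mathbf{1}_{A}\right]+E\!\left[\|\tilde\beta_{Select+mLS}(\tau_n)-\beta^*\|_2^2\mathbf{1}_{A^{c}}\right].
\]
The first term is at most $E\|\tilde\beta^{o}-\beta^*\|_2^2=\frac{\sigma^2}{n}tr(C_{11}^{-1})$, which is the leading term of \eqref{eqn:theo2}. In the second term I would use $\|\tilde\beta-\beta^*\|_2^2\le 2\|\tilde\beta\|_2^2+2\|\beta^*\|_2^2$, the a priori bound together with $\|Y\|_2^2\le 2\|X\beta^*\|_2^2+2\|\epsilon\|_2^2$, and the Cauchy--Schwarz estimate $E[\|\epsilon\|_2^2\mathbf{1}_{A^{c}}]\le(E\|\epsilon\|_2^4)^{1/2}P(A^{c})^{1/2}\le\sqrt{3}\,n\sigma^2\,P(A^{c})^{1/2}$; collecting terms and using $P(A^{c})\le P(A^{c})^{1/2}$ to unify the powers, the crude numerical constants ($4<8$, $4\sqrt{3}<8$, $2<8$) deliver \eqref{eqn:theo2}.

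The only genuinely delicate point is that the selection event $A$ is random and statistically dependent on $\epsilon$, so one may \emph{not} simply assert that the oracle error has conditional mean zero given $A$; the remedy is the displayed identity in the bias step, which reroutes the whole discrepancy between $E\tilde\beta_{Select+mLS}(\tau_n)$ and $\beta^*$ through the low-probability event $A^{c}$, after which Cauchy--Schwarz and second/fourth Gaussian moments finish the job. Matching the precise constants $2$ and $8$ is merely a matter of grouping the elementary inequalities appropriately; in particular nothing is used about \emph{how} $\hat S$ is produced beyond the single quantity $P(\hat S\neq S)$, which is exactly why the same argument will later apply verbatim when the Lasso is replaced by stability selection, SCAD, or the Dantzig selector.
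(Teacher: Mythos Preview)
Your proposal is correct and follows essentially the same route as the paper: condition on $\{\hat S=S\}$, identify the estimator with the oracle OLS there, reroute the bias through the bad event via the identity $E\tilde\beta-\beta^*=E[\tilde\beta\,\mathbf 1_{A^c}]-E[\tilde\beta^{o}\mathbf 1_{A^c}]$ (the paper reaches the same two terms by a chain of triangle inequalities), and control each piece by Cauchy--Schwarz together with the deterministic bound $\|\tilde\beta\|_2^2\le\frac{1}{n\tau_n^2}\|Y\|_2^2$. The only cosmetic difference is in the MSE step: the paper applies Cauchy--Schwarz directly to $E[\|\tilde\beta\|_2^2\mathbf 1_{A^c}]\le(E\|\tilde\beta\|_2^4)^{1/2}P(A^c)^{1/2}$ and then bounds $E\|Y\|_2^4$, whereas you first split $\|Y\|_2^2\le 2\|X\beta^*\|_2^2+2\|\epsilon\|_2^2$ and apply Cauchy--Schwarz only to the $\|\epsilon\|_2^2$ term; both routes use the same Gaussian fourth moment and land on the same constants.
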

\ \\
{\bf Remark 2.1.} Let $\beta_S^{OLS}=(X_S^TX_S)^{-1}X_S^TY = \beta^*_S+(X_S^TX_S)^{-1}X_S^T\epsilon$ be the oracle OLS estimator. It is easy to see that $E||\beta_S^{OLS} - \beta^*||_2^2  =  \frac{\sigma^2}{n} tr(C_{11}^{-1})$. The first term on the right hand side of $\eqref{eqn:theo2}$ corresponds to the oracle convergence rate. The second term is related to model selection accuracy. In the case of the Lasso, $P(\hat S \neq S)$ can decay at an exponential rate. Hence the MSE is completely determined by the first term $\frac{\sigma^2}{n} tr(C_{11}^{-1})$, which can not be improved.
\ \\
{\bf Remark 2.2.} From theorem~\ref{theorem: asymptotic bias and variance}, one can easily get an upper bound for prediction mean squared error $E\{\frac{1}{n}||X\tilde \beta -X\beta^*||_2^2\}$ since
\[ \frac{1}{n}||X\tilde \beta -X\beta^*||_2^2 \leq \Lambda_{max}(\frac{1}{n}X^TX) ||\tilde \beta - \beta^*||_2^2. \]

Similarly, for $\tilde \beta_{Select+Ridge}(\mu_n)$, we have:
\begin{theorem}
\label{theorem: asymptotic bias and variance ridge}
Suppose that Gaussian assumption (a) is satisfied, then the bias and the MSE of $\tilde \beta_{Select+Ridge}(\mu_n)$ satisfy
\begin{eqnarray}
 & & || E \tilde \beta_{Select+Ridge}(\mu_n) - \beta^*||_2^2 \nonumber \\
 & \leq & \frac{2\mu_n^2}{n^2 \Lambda_{min}^2} ||\beta^*||_2^2 +  2 P(\hat S \neq S) \frac{n}{\mu_n} \left\{ \frac{1}{n}||X\beta^*||_2^2  + \sigma^2 \right\},
 \label{eqn:theo21}
\end{eqnarray}
\begin{eqnarray}
 & & E||\tilde \beta_{Select+Ridge}(\mu_n) - \beta^*||_2^2  \nonumber \\
 & \leq & \frac{\sigma^2}{n} tr\{(C_{11}+\frac{\mu_n}{n}I)^{-2}C_{11}\} + \frac{\mu_n^2}{n^2 \Lambda_{min}^2} ||\beta^*||_2^2 +  \nonumber \\
  & &  2 \sqrt{ P(\hat S \neq S) }  \left\{ ||\beta^*||_2^2 + \frac{n}{\mu_n} \{\frac{1}{n}||X\beta^*||_2^2  + \sigma^2\}  \right\}.
 \label{eqn:theo22}
\end{eqnarray}
\end{theorem}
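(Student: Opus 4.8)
The plan is to compare $\tilde\beta_{Select+Ridge}(\mu_n)$ with the \emph{oracle Ridge estimator} $\tilde\beta^{o}$, defined by $\tilde\beta^{o}_{S}=(X_S^TX_S+\mu_n I)^{-1}X_S^TY$ and $\tilde\beta^{o}_{S^c}=0$ --- i.e.\ exactly what Select+Ridge returns on the good event $A:=\{\hat S=S\}$. Since the two estimators coincide on $A$ and $\mathbf 1_A\le 1$, I would write the bias as $E\tilde\beta_{Select+Ridge}(\mu_n)-\beta^*=(E\tilde\beta^{o}-\beta^*)+E[(\tilde\beta_{Select+Ridge}(\mu_n)-\tilde\beta^{o})\mathbf 1_{A^c}]$ and the MSE as $E\|\tilde\beta_{Select+Ridge}(\mu_n)-\beta^*\|_2^2\le E\|\tilde\beta^{o}-\beta^*\|_2^2+E[\|\tilde\beta_{Select+Ridge}(\mu_n)-\beta^*\|_2^2\mathbf 1_{A^c}]$. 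The first term of each is an ``oracle'' contribution, the second is the cost of imperfect selection.

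Next I would evaluate the oracle terms exactly. The resolvent identity $(X_S^TX_S+\mu_n I)^{-1}X_S^TX_S-I=-\mu_n(X_S^TX_S+\mu_n I)^{-1}$ gives $E\tilde\beta^{o}_S-\beta^*_S=-\mu_n(X_S^TX_S+\mu_n I)^{-1}\beta^*_S$, hence $\|E\tilde\beta^{o}-\beta^*\|_2^2=\mu_n^2\|(X_S^TX_S+\mu_n I)^{-1}\beta^*_S\|_2^2\le \mu_n^2(\Lambda_{min}(X_S^TX_S))^{-2}\|\beta^*\|_2^2$, which by Assumption (c) and $X_S^TX_S=nC_{11}$ is at most $\frac{\mu_n^2}{n^2\Lambda_{min}^2}\|\beta^*\|_2^2$. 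For the variance, $\mathrm{Cov}(\tilde\beta^{o}_S)=\sigma^2(X_S^TX_S+\mu_n I)^{-2}X_S^TX_S$, so factoring $n$ out of $X_S^TX_S+\mu_n I$ gives $E\|\tilde\beta^{o}_S-E\tilde\beta^{o}_S\|_2^2=\mathrm{tr}\,\mathrm{Cov}(\tilde\beta^{o}_S)=\frac{\sigma^2}{n}\mathrm{tr}\{(C_{11}+\frac{\mu_n}{n}I)^{-2}C_{11}\}$. Adding squared bias and variance yields $E\|\tilde\beta^{o}-\beta^*\|_2^2\le \frac{\sigma^2}{n}\mathrm{tr}\{(C_{11}+\frac{\mu_n}{n}I)^{-2}C_{11}\}+\frac{\mu_n^2}{n^2\Lambda_{min}^2}\|\beta^*\|_2^2$, i.e.\ the first two terms of \eqref{eqn:theo22}, and via $\|a+b\|_2^2\le 2\|a\|_2^2+2\|b\|_2^2$ the first term of \eqref{eqn:theo21}.

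The substantive step is to control the $A^c$ contribution \emph{uniformly over which set $\hat S$ happened to be selected}. The key a priori bound is that, writing $\frac1{\sqrt n}X_{\hat S}=UDV^T$ as in \eqref{eqn:SVD}, the matrix $(X_{\hat S}^TX_{\hat S}+\mu_n I)^{-1}X_{\hat S}^T$ has singular values $\sqrt n\,\lambda_k/(n\lambda_k^2+\mu_n)$, each $\le \frac1{2\sqrt{\mu_n}}$ by $n\lambda_k^2+\mu_n\ge 2\sqrt n\,\lambda_k\sqrt{\mu_n}$; hence $\|\tilde\beta_{Select+Ridge,\hat S}(\mu_n)\|_2\le \frac1{2\sqrt{\mu_n}}\|Y\|_2$ for \emph{every} realized $\hat S$, and likewise $\|\tilde\beta^{o}_S\|_2\le \frac1{2\sqrt{\mu_n}}\|Y\|_2$. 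Under Gaussian Assumption (a) one has $E\|Y\|_2^2=\|X\beta^*\|_2^2+n\sigma^2$ (the cross term $2\langle X\beta^*,\epsilon\rangle$ is mean zero) and a matching fourth-moment bound $E\|Y\|_2^4\le 8(\|X\beta^*\|_2^2+n\sigma^2)^2$ (the odd-in-$\epsilon$ terms drop out). For the bias I would then use Cauchy--Schwarz against $\mathbf 1_{A^c}$: $\|E[(\tilde\beta_{Select+Ridge}(\mu_n)-\tilde\beta^{o})\mathbf 1_{A^c}]\|_2^2\le E\|\tilde\beta_{Select+Ridge}(\mu_n)-\tilde\beta^{o}\|_2^2\cdot P(A^c)$, and bound $E\|\tilde\beta_{Select+Ridge}(\mu_n)-\tilde\beta^{o}\|_2^2\le \frac1{\mu_n}E\|Y\|_2^2=\frac1{\mu_n}(\|X\beta^*\|_2^2+n\sigma^2)$ by the a priori bound (using the identity for $E\|Y\|_2^2$ directly, rather than the triangle inequality on $\|Y\|_2$, is what keeps the constant at $2$ in \eqref{eqn:theo21} instead of $4$); combined with the factor $2$ from $\|a+b\|_2^2\le2\|a\|_2^2+2\|b\|_2^2$ this gives the $2P(\hat S\neq S)\frac n{\mu_n}\{\frac1n\|X\beta^*\|_2^2+\sigma^2\}$ term. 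For the MSE I would instead write $\|\tilde\beta_{Select+Ridge}(\mu_n)-\beta^*\|_2^2\le 2\|\tilde\beta_{Select+Ridge}(\mu_n)\|_2^2+2\|\beta^*\|_2^2$ and use Cauchy--Schwarz one power higher: $E[\|\tilde\beta_{Select+Ridge}(\mu_n)\|_2^2\mathbf 1_{A^c}]\le \sqrt{E\|\tilde\beta_{Select+Ridge}(\mu_n)\|_2^4}\,\sqrt{P(A^c)}\le \frac{\sqrt2}{2\mu_n}(\|X\beta^*\|_2^2+n\sigma^2)\sqrt{P(A^c)}$, together with $\|\beta^*\|_2^2P(A^c)\le \|\beta^*\|_2^2\sqrt{P(A^c)}$; since $\frac{\sqrt2}{2}\le 2$, these combine into the $2\sqrt{P(\hat S\neq S)}\{\|\beta^*\|_2^2+\frac n{\mu_n}(\frac1n\|X\beta^*\|_2^2+\sigma^2)\}$ term of \eqref{eqn:theo22}.

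I expect the main obstacle to be precisely this last point: obtaining a bound on $\tilde\beta_{Select+Ridge}(\mu_n)$ (and on its difference from the oracle) that holds \emph{simultaneously over all possible selected sets} $\hat S$, so that it decouples from the random event $A^c$; the spectral/AM--GM estimate $\|(X_{\hat S}^TX_{\hat S}+\mu_n I)^{-1}X_{\hat S}^T\|_{\mathrm{op}}\le \frac1{2\sqrt{\mu_n}}$ is what makes this go through, and it is the Ridge analogue of the truncated-pseudoinverse bound used for Theorem~\ref{theorem: asymptotic bias and variance}. Everything else is bookkeeping with $\|a+b\|_2^2\le2\|a\|_2^2+2\|b\|_2^2$, Cauchy--Schwarz, the resolvent identity for the oracle bias, and Gaussian moment computations; the overall structure mirrors the proof of Theorem~\ref{theorem: asymptotic bias and variance}, the only genuinely new ingredients being the nonzero oracle bias and the replacement of the truncated pseudo-inverse by $(X_{\hat S}^TX_{\hat S}+\mu_n I)^{-1}$ throughout.
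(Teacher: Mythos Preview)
Your proposal is correct and follows essentially the same approach as the paper's proof: both hinge on the AM--GM spectral bound $\frac{n\lambda_k^2}{(n\lambda_k^2+\mu_n)^2}\le\frac{1}{4\mu_n}$ (equivalently $\|(X_{\hat S}^TX_{\hat S}+\mu_n I)^{-1}X_{\hat S}^T\|_{\mathrm{op}}\le\frac{1}{2\sqrt{\mu_n}}$) to control $\|\tilde\beta_{\hat S}\|$ uniformly in the selected set, the resolvent identity for the oracle bias, Cauchy--Schwarz against $\mathbf 1_{A^c}$, and the Gaussian moment computations for $E\|Y\|_2^2$ and $E\|Y\|_2^4$. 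The only cosmetic difference is that you organize the bias via a single correction term $E[(\tilde\beta-\tilde\beta^o)\mathbf 1_{A^c}]$, whereas the paper splits it into $\|E\tilde\beta_S\mathbb I_{\hat S\neq S}\|_2$ and $\|E\tilde\beta_{\hat S}\mathbb I_{\hat S\neq S}\|_2$ separately before recombining; the resulting constants and structure are identical.
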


Theorems~\ref{theorem: asymptotic bias and variance} and \ref{theorem: asymptotic bias and variance ridge} indicate that as long as the probability of selecting wrong models $P(\hat S \neq S)$ decays fast, say at an exponential rate, Select+mLS and Select+Ridge are asymptotically unbiased and their MSEs decay at the oracle rate. We have known that under the Irrepresentable condition and other regularity conditions, the probability of the Lasso selecting wrong models satisfies $P(\hat S \neq S)=o(e^{-n^{c_2}})$ \cite{ZhaoYu2006}. Then applying the above theorems to Lasso+mLS and Lasso+Ridge as special cases, we can easily derive their convergence rates of bias and MSE, see Section 3 for more details.

\subsection{Asymptotic Normality of Select+mLS and Select+Ridge}

In this section, we show asymptotic normality of Select+mLS and Select+Ridge. Let $\hat \Psi$ and $\Psi$ be the distribution functions of $\sqrt{n}( \tilde \beta_S - \beta^*_S )$ and $N(0,\sigma^2C_{11}^{-1})$ respectively, where $\tilde \beta$ can be any one of the two post model selection estimators: Select+mLS and Select+Ridge. Let $\hat S$ be the selected predictor set,

\begin{theorem}
\label{theorem: asymptotic normality}
Suppose that assumptions (a)-(e) are satisfied and that the model selection procedure is consistent, i.e., $P(\hat S \neq S)=o(1)$, then Select+mLS and Select+Ridge are asymptotically normal\footnote{For Select+Ridge, we need assumption (g) proposed in the next subsection to hold. Due to the restricted space, we don't state it separately.}, that is,
\begin{equation}
 \mathop {sup}\limits_{t \in R^s} | \hat \Psi(t) - \Psi(t)|  \rightarrow 0, \ as\ n\rightarrow \infty.
 \label{eqn:normality}
\end{equation}
\end{theorem}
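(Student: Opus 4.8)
The plan is to decompose the event space according to whether model selection succeeds, and to show that on the good event $\{\hat S = S\}$ the estimator coincides (up to a negligible perturbation, in the Ridge case) with the oracle OLS estimator $\beta_S^{OLS}$, which is exactly Gaussian with the target distribution, while the bad event $\{\hat S \neq S\}$ has vanishing probability and hence cannot affect the supremum of the CDF difference. Concretely, for any $t \in \mathbb{R}^s$ write
\begin{eqnarray*}
\hat\Psi(t) & = & P\bigl(\sqrt{n}(\tilde\beta_S - \beta_S^*) \leq t\bigr) \\
& = & P\bigl(\sqrt{n}(\tilde\beta_S - \beta_S^*) \leq t,\ \hat S = S\bigr) + P\bigl(\sqrt{n}(\tilde\beta_S - \beta_S^*) \leq t,\ \hat S \neq S\bigr),
\end{eqnarray*}
where $\leq$ is taken coordinatewise. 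The second term is bounded by $P(\hat S \neq S) = o(1)$ uniformly in $t$. For the first term, I would compare it to $P(\sqrt{n}(\beta_S^{OLS} - \beta_S^*) \leq t,\ \hat S = S)$, which differs from $\Psi(t)$ by at most $P(\hat S \neq S) = o(1)$ since $\sqrt{n}(\beta_S^{OLS} - \beta_S^*) = \sqrt{n}(X_S^TX_S)^{-1}X_S^T\epsilon \sim N(0, \sigma^2 C_{11}^{-1})$ exactly under assumption (a). Putting these together gives $\sup_t |\hat\Psi(t) - \Psi(t)| \to 0$, provided that on $\{\hat S = S\}$ the estimators $\tilde\beta_S$ are close enough to $\beta_S^{OLS}$.

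The crux is therefore the agreement between $\tilde\beta_S$ and $\beta_S^{OLS}$ on the event $\{\hat S = S\}$. For Select+mLS this is essentially exact: by the discussion preceding the theorem, $\tilde\beta_{mLS}(\tau_n) = \tilde\beta_{OLS}$ whenever $\tau_n^2 \leq \Lambda_{min}(\tfrac{1}{n}X_{\hat S}^TX_{\hat S})$, and on $\{\hat S = S\}$ this eigenvalue is $\Lambda_{min}(C_{11}) \geq \Lambda_{min} > 0$ by assumption (c), while $\tau_n \propto 1/n \to 0$ by assumption (e); so for $n$ large, $\tilde\beta_{Select+mLS,S} = \beta_S^{OLS}$ on $\{\hat S = S\}$ and the first term above equals $P(\sqrt{n}(\beta_S^{OLS} - \beta_S^*) \leq t,\ \hat S = S)$ outright. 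For Select+Ridge the two differ by the ridge shrinkage: on $\{\hat S = S\}$,
\[
\tilde\beta_{Select+Ridge,S}(\mu_n) - \beta_S^{OLS} = \bigl[(C_{11} + \tfrac{\mu_n}{n}I)^{-1} - C_{11}^{-1}\bigr]\tfrac{1}{n}X_S^TY = -\tfrac{\mu_n}{n}(C_{11}+\tfrac{\mu_n}{n}I)^{-1}C_{11}^{-1}\tfrac{1}{n}X_S^TY,
\]
whose Euclidean norm, after multiplying by $\sqrt{n}$, I would bound using $\mu_n \propto 1/n$, assumption (c), and a bound on $\|\tfrac{1}{\sqrt n}X_S^TY\|_2$ (this is where the unstated assumption (g) enters, controlling moments of the design/response so that the perturbation is $o_p(1)$). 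Then a standard argument — if $Z_n \to_d Z$ with $Z$ having a continuous distribution and $R_n \to_p 0$, then $Z_n + R_n$ has the same limiting CDF, uniformly by Pólya's theorem since the limit is continuous — transfers the Gaussian limit from $\sqrt n(\beta_S^{OLS}-\beta_S^*)$ restricted to $\{\hat S = S\}$ to $\sqrt n(\tilde\beta_{Select+Ridge,S} - \beta_S^*)$.

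The main obstacle I anticipate is the uniformity in $t \in \mathbb{R}^s$ together with the fact that $s = s_n$ grows with $n$: one must be careful that "$N(0,\sigma^2 C_{11}^{-1})$" refers to a sequence of distributions on growing-dimensional spaces, so the convergence in \eqref{eqn:normality} is a statement about the sequence of CDF pairs, and the continuity/Pólya step must be applied to each fixed $n$ with the dimension frozen. The genuinely delicate point is controlling the Ridge perturbation term uniformly — showing $\sqrt n\|\tilde\beta_{Select+Ridge,S}(\mu_n) - \beta_S^{OLS}\|_\infty \to_p 0$ — which requires bounding $\|\tfrac{1}{n}X_S^TY\|_\infty$ over the growing index set $S$; assumption (b) (standardized columns) plus the Gaussian tail of $\epsilon$ and the sparsity/scaling of assumption (d) should suffice via a union bound, but this is the step that needs the auxiliary assumption (g) and the most care. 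The mLS case, by contrast, is clean because the thresholding makes the identification with oracle OLS exact rather than merely approximate.
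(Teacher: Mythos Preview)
Your proposal is correct and follows essentially the same decomposition as the paper: split on $\{\hat S = S\}$, use the exact identification $\tilde\beta_{Select+mLS,S}=\beta_S^{OLS}$ on that event (via $\tau_n^2\leq \Lambda_{min}(C_{11})$ for large $n$) to get $\sup_t|\hat\Psi(t)-\Psi(t)|\leq 2P(\hat S\neq S)\to 0$, and then for Ridge show the $\sqrt n$-scaled perturbation from $\beta_S^{OLS}$ is $o_p(1)$. One simplification worth noting: the paper handles the Ridge perturbation in $\ell_2$ (not $\ell_\infty$) via the SVD of $\tfrac{1}{\sqrt n}X_S$, obtaining on $\{\hat S=S\}$ the bound $\|\sqrt n(\tilde\beta_{Select+Ridge,S}-\beta_S^{OLS})\|_2^2 \leq \tfrac{\mu_n^2}{n^2\Lambda_{min}^3}\|Y\|_2^2 = O_p(\mu_n^2)$ directly from assumption (g) via $E\|Y\|_2^2=\|X\beta^*\|_2^2+n\sigma^2=O(n^2)$, which sidesteps the union-bound and $\ell_\infty$ concerns you raise and makes that step no harder than the mLS case.
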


This theorem states that model selection consistency in the first stage implies the asymptotic normality of the second stage estimators: Select+mLS and Select+Ridge. The proof of this theorem can be found in Appendix~\ref{C}.

\subsection{Residual Bootstrap after Select+mLS and Select+Ridge}

To make reliable scientific discoveries, we need to establish valid inference procedures including constructing confidence regions and testing for the parameter estimation. Although we have derived the asymptotic normality of Select+mLS and Select+Ridge, it is difficult to use in practice because the noise variance $\sigma^2$ is not known and hard to estimate in a high-dimensional setting. The bootstrap is a popular alternative in this case. A summary of bootstrap methods in linear and generalized linear penalized regression models for fixed $p$ can be found in \cite{Phdthesis}. We will consider the $p\gg n$ case by proposing a new inference procedure: residual bootstrap after two stage estimators. Our method allows $p$ to grow at an exponential rate in $n$.

In the context of a regression model, residual bootstrap is a standard method to bootstrap when the design matrix $X$ is deterministic \cite{Efron1979,Freedman1981,KnightFu2000}. Let $\tilde \beta$ denote Select+mLS or Select+Ridge, the residual vector is given by:
\begin{equation}
 \hat \epsilon = (\hat \epsilon_1,...,\hat \epsilon_n)^T = Y - X \tilde \beta.
 \label{eqn:residual}
\end{equation}
Consider the centered residuals at the mean $\{\hat \epsilon_i - \hat \mu,\ i=1,...,n\}$, where $\hat \mu = \frac{1}{n} \sum\limits_{i=1}^{n} {\hat \epsilon_i}$. For residual bootstrap, one obtains $\epsilon^*=(\epsilon_1^*,...,\epsilon_n^*)^T$ by resampling with replacement from the centered residuals $\{\hat \epsilon_i - \hat \mu,\ i=1,...,n\}$, and formulates $Y^*$ as follows
\begin{equation}
 Y^* = X \tilde \beta + \epsilon^*.
 \label{eqn:resampleY}
\end{equation}
Then one can define the selected predictor set $\hat S^*$ and Select+mLS or Select+Ridge $\tilde \beta^*$ based on the bootstrap sample $(X,Y^*)$. For the bootstrap to be valid, one needs to verify that the conditional distribution of $T_n^*= \sqrt{n}(\tilde \beta^*-\tilde \beta)$ given $\epsilon$, which can be computed directly from the data, approximates the distribution of $T_n = \sqrt{n}(\tilde \beta-\beta^*)$. The difference between two distributions can be characterized by Mallows metric.
\begin{definition}
The Mallows metric $d$, relative to the Euclidean norm $||\cdot||$, of two distributions $F$ and $\tilde F$ is the infimum of
$(E||Z-W||_2^2)^{\frac{1}{2}}$ over all pairs of random vectors $Z$ and $W$, where $Z$ has distribution $F$ and $W$ has distribution $\tilde F$. That is,
\begin{equation}
 d(F,\tilde F) = \mathop {inf}\limits_{Z \sim F,W \sim \tilde F} (E||Z-W||_2^2)^{\frac{1}{2}}.
 \label{eqn:mallow}
\end{equation}
\label{eqn:Mallows}
\end{definition}
By Lemma 8.1 in \cite{BickelFreedman1981a}, the infimum can be attained. To proceed, denote $G_n$ and $G_n^*$ the distribution of $T_n$ and the conditional distribution of $T_n^*$ respectively. Let $P^*$ denote the conditional probability given the error variables $\{\epsilon_i,i=1,...,n\}$.
To show the validity of residual bootstrap after Select+mLS or Select+Ridge, we need more conditions:
\ \\

{\em Assumption (dd): suppose that $s^2/n\rightarrow 0,\ as \ n\rightarrow \infty$.
}

{\em Assumption (f): suppose that both the probability of selecting wrong models based on the original data $(X,Y)$ and that based on the resample $(X,Y^*)$ decay at an exponential rate, i.e. $ P(\hat S \neq S) = o(e^{-n^{c_2}})$ and $ P^*(\hat S^* \neq \hat S) = o_p(e^{-n^{c_2}})$.
}

{\em Assumption (g): suppose that
\begin{equation}
 \frac{1}{n}||X\beta^*||_2^2 =O(n).
 \label{asum:f}
\end{equation}
}

{\em Assumption (h): suppose that $\mathop {max}\limits_{1\leq i \leq n}\sum\limits_{j=1}^{s}x_{ij}^2=o(n^{\frac{1}{2}})$.
}
\ \\

Assumption (dd) is stronger than assumption (d) because it requires that $s$ grows slower than $\sqrt{n}$. Without considering model selection, \cite{BickelFreedman1983} showed that residual bootstrap OLS fails if $p^2/n$ does not tend to 0. Therefore, the assumption (dd) cannot be weakened. As we shown in the next section, assumption (f) is satisfied if the Irrepresentable condition and some regularity conditions hold. Assumption (g) is a technical assumption which makes the convergence rates in Theorems~\ref{theorem: asymptotic bias and variance} and \ref{theorem: asymptotic bias and variance ridge} more clear. If we suppose $\Lambda_{max}(C_{11}) = \Lambda_{max}(\frac{1}{n}X_S^T X_S) \leq \Lambda_{max} <\infty$ where $\Lambda_{max}$ is a constant, assumption (g) is equivalent to $||\beta^*||_2^2 =O(n)$ because
$$\Lambda_{min} ||\beta^*||_2^2 \leq \frac{1}{n}||X\beta^*||_2^2 = \frac{1}{n}||X_S\beta^*_S||_2^2 \leq \Lambda_{max}(\frac{1}{n}X_S^T X_S)||\beta^*||_2^2 \leq \Lambda_{max} ||\beta^*||_2^2.$$
Since $\beta^*$ has only $s \ll n$ nonzero components, this assumption is not very restrictive. Obviously, it is satisfied when the maximum of $\beta^*_j$ is upper bounded by a constant. Assumption (h) is not very restrictive either because the number of terms in the sum is $s\ll n^{\frac{1}{2}}$ and it clearly holds when all the predictors corresponding to the nonzero coefficients are bounded by a constant $M$, i.e. $|x_{ij}|\leq M,\ i=1,...,n,\ j=1,...,s$. \cite{Huang2008} also assumed this condition to show the asymptotic normality of Bridge estimator.

Let ``$\rightarrow_p$'' denote convergence in probability,
\begin{theorem}
\label{theorem:bootstraplassovalid}
Suppose that assumptions (a)-(h) and (dd) are satisfied, then $d(G_n,G_n^*)$ converges in probability to zero, i.e.,
\begin{equation}
 d(G_n,G_n^*)\rightarrow_p 0.
 \label{eqn:bootstrap}
\end{equation}
\end{theorem}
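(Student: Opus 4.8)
The plan is to bound the Mallows distance by a triangle inequality that routes through an explicit representation of each estimator as a linear functional of the noise. Let $A_n=\frac{1}{\sqrt n}C_{11}^{-1}X_S^T$ (an $s\times n$ matrix), let $\widetilde G_n$ denote the law of $A_n\epsilon$ (with zeros appended on the coordinates of $S^c$), and let $\widetilde G_n^*$ denote the conditional law, given $\epsilon$, of $A_n\epsilon^*$; since $\frac1nX_S^TX_S=C_{11}$ one has $\widetilde G_n=N(0,\sigma^2C_{11}^{-1})$, the limit appearing in Theorem~\ref{theorem: asymptotic normality}. Then
\[ d(G_n,G_n^*)\ \le\ d(G_n,\widetilde G_n)+d(\widetilde G_n,\widetilde G_n^*)+d(\widetilde G_n^*,G_n^*), \]
and I would show the first term is $o(1)$ and the other two are $o_p(1)$; this in particular re-derives, and strengthens to the Mallows metric, the asymptotic normality of the two post-selection estimators.

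For the outer two terms the key point is that on $\{\hat S=S\}$ --- which by Assumption~(f) has probability $1-o(e^{-n^{c_2}})$, and on which $\tau_n^2\le\Lambda_{min}(C_{11})$ holds eventually by Assumptions (c), (e) --- the Select+mLS estimator equals the oracle OLS, so $T_n=A_n\epsilon$ there, and likewise on $\{\hat S^*=\hat S=S\}$ one has $T_n^*=A_n\epsilon^*$. For Select+Ridge the same identities hold up to the additive corrections $-\mu_n(nC_{11}+\mu_nI)^{-1}\beta_S^*$ and $\frac1n\{(C_{11}+\tfrac{\mu_n}{n}I)^{-1}-C_{11}^{-1}\}X_S^T\epsilon$, whose Euclidean norms after multiplication by $\sqrt n$ are $O(\mu_n)=o(1)$ and $O_p(\mu_n\sqrt s/n)=o_p(1)$ respectively (using $\mu_n\propto\frac1n$, Assumption~(c), and $\|\beta^*\|_2^2=O(n)$, which follows from Assumption~(g)); these are absorbed by the triangle inequality. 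Since the Mallows distance is dominated by the $L^2$ distance of the coupling that uses the same $\epsilon$ throughout, $d(G_n,\widetilde G_n)^2\le 2\,E[\mathbf 1\{\hat S\neq S\}(\|T_n\|_2^2+\|A_n\epsilon\|_2^2)]\le 2\sqrt{P(\hat S\neq S)}\big((E\|T_n\|_2^4)^{1/2}+(E\|A_n\epsilon\|_2^4)^{1/2}\big)$; crude Gaussian and operator-norm bounds give $E\|T_n\|_2^4=\mathrm{poly}(n)$ (the only potentially large factor being $\tau_n^{-2}\propto n^2$ when selection fails, still polynomial) and $E\|A_n\epsilon\|_2^4=O(s^2)$, so this term vanishes. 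The term $d(\widetilde G_n^*,G_n^*)$ is handled the same way conditionally, using $P^*(\hat S^*\neq\hat S)=o_p(e^{-n^{c_2}})$; here one first checks that the empirical moments $\frac1n\sum_i(\hat\epsilon_i-\hat\mu)^k$ ($k=2,4,\dots$) converge in probability to those of $N(0,\sigma^2)$, which follows from $\max_i|\hat\epsilon_i-\epsilon_i|=\max_i|x_i^T(\tilde\beta-\beta^*)|\le(\max_i\sum_{j\le s}x_{ij}^2)^{1/2}\|\tilde\beta_S-\beta_S^*\|_2=o_p(1)$ by Assumption~(h), Assumption~(dd) and $\|\tilde\beta_S-\beta_S^*\|_2^2=O_p(s/n)$ (Theorems~\ref{theorem: asymptotic bias and variance} and \ref{theorem: asymptotic bias and variance ridge}); thus the resampling distribution inherits the moment behaviour of the $\epsilon_i$ and the polynomial bounds transfer.

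The middle term is the crux and uses only the structure ``$A_n$ applied to i.i.d.\ mean-zero noise''. Coupling $\epsilon_i$ with $\epsilon_i^*$ by the optimal one-dimensional coupling, independently across $i$, the difference has independent mean-zero coordinates with common variance $d(N(0,\sigma^2),\hat F_n)^2$, where $\hat F_n$ is the empirical distribution of the centered residuals, so
\[ d(\widetilde G_n,\widetilde G_n^*)^2\ \le\ \|A_n\|_F^2\,d\big(N(0,\sigma^2),\hat F_n\big)^2\ =\ \mathrm{tr}(C_{11}^{-1})\,d\big(N(0,\sigma^2),\hat F_n\big)^2\ \le\ \frac{s}{\Lambda_{min}}\,d\big(N(0,\sigma^2),\hat F_n\big)^2 . \]
Write $d(N(0,\sigma^2),\hat F_n)\le d(N(0,\sigma^2),F_n^\epsilon)+d(F_n^\epsilon,\hat F_n)$ with $F_n^\epsilon$ the empirical distribution of $\{\epsilon_i-\bar\epsilon\}$. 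The first piece is the classical one-dimensional Wasserstein law of large numbers for i.i.d.\ Gaussian data, of order $O_p(\sqrt{\log\log n/n})=o_p(n^{-1/2})$. For the second, since the design columns are centered (Assumption~(b)) one has $\hat\mu=\bar\epsilon$ and $(\epsilon_i-\bar\epsilon)-(\hat\epsilon_i-\hat\mu)=x_i^T(\tilde\beta-\beta^*)$, hence $d(F_n^\epsilon,\hat F_n)^2\le\frac1n\|X(\tilde\beta-\beta^*)\|_2^2=\frac1n\epsilon^TH_S\epsilon=O_p(s/n)$ on $\{\hat S=S\}$, where $H_S=X_S(X_S^TX_S)^{-1}X_S^T$. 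Multiplying by $s$ gives $d(\widetilde G_n,\widetilde G_n^*)^2=O_p(s^2/n+s\log\log n/n)=o_p(1)$, which is exactly where Assumption~(dd), $s^2/n\to0$, enters.

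The main obstacle is precisely this last estimate: replacing the true errors by the residuals costs a factor equal to the prediction error $\frac1n\|X(\tilde\beta-\beta^*)\|_2^2=O_p(s/n)$ in squared Mallows distance, and this must survive multiplication by the effective dimension $\mathrm{tr}(C_{11}^{-1})\asymp s$ coming from $\|A_n\|_F^2$ --- the high-dimensional analogue of the Bickel--Freedman $p^2/n$ phenomenon, which forces $s=o(\sqrt n)$. A secondary technical burden is the careful bookkeeping for the exponentially rare wrong-selection events, where the modified least squares step can be severely ill-conditioned ($\tau_n^{-2}\propto n^2$): one needs only polynomial-in-$n$ moment control there so that the $o(e^{-n^{c_2}})$ and $o_p(e^{-n^{c_2}})$ decay rates of Assumption~(f) dominate, but this has to be carried out on both the original and the bootstrap side, the latter relying on Assumption~(h) to control $\|\hat\epsilon-\epsilon\|_\infty$ and thereby transfer the moment control from $\epsilon$ to $\epsilon^*$.
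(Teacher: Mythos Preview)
Your proposal is essentially the paper's own proof, just organized slightly differently: the paper bounds $d^2(G_n,G_n^*)$ directly via a single optimal $(F,\hat F_n)$ coupling and splits on the events $\{\hat S=S\}$, $\{\hat S^*=S\}$ to obtain $d^2(G_n,G_n^*)\le 4\,\mathrm{tr}(C_{11}^{-1})\,d^2(F,\hat F_n)+o_p(1)$ (its Lemma~\ref{lemma:theorem31}), whereas you route through the intermediate laws $\widetilde G_n,\widetilde G_n^*$ by a triangle inequality; your ``outer'' terms are exactly the paper's wrong-selection remainder terms and your ``middle'' term is its main term. The subsequent reduction $d(F,\hat F_n)\le d(F,F_n)+d(F_n,\hat F_n)$, the use of the del~Barrio--Cuesta-Albertos--Matr\'an rate $d^2(F,F_n)=O_p(\log\log n/n)$, and the estimate $d^2(F_n,\hat F_n)\le\frac1n\|X(\tilde\beta-\beta^*)\|_2^2=O_p(s/n)$ all match the paper's Lemmas~\ref{lemma:theorem32}--\ref{lemma:theorem34}. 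One small slip: $O_p(\sqrt{\log\log n/n})$ is \emph{not} $o_p(n^{-1/2})$ (the $\sqrt{\log\log n}$ factor is there); what you actually need, and what still holds, is $s\cdot(\log\log n)/n\to 0$ under Assumption~(dd).
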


Theorem~\ref{theorem:bootstraplassovalid} states that residual bootstrap after Select+mLS or Select+Ridge gives a valid approximation to the distribution of $T_n$ if the probabilities of selecting wrong models $P(\hat S \neq S) $ and $P^*(\hat S^* \neq \hat S)$ decay at an exponential rate and the number of true predictors $s$ grows slower than $\sqrt{n}$. The proof of this theorem can be found in Appendix~\ref{C}.

In the next section, we will apply the above Theorems~\ref{theorem: asymptotic bias and variance}, \ref{theorem: asymptotic bias and variance ridge}, \ref{theorem: asymptotic normality} and \ref{theorem:bootstraplassovalid} to two special cases: Lasso+mLS and Lasso+Ridge.

\section{Asymptotic Properties of Lasso+mLS and Lasso+Ridge}

As we shown in Section 2, Select+mLS and Select+Ridge have attractive asymptotic properties (see Theorem~\ref{theorem: asymptotic bias and variance}, \ref{theorem: asymptotic bias and variance ridge}, \ref{theorem: asymptotic normality} and \ref{theorem:bootstraplassovalid}) if the probability of selecting wrong models decays exponentially fast. Applying these theorems to Lasso+mLS and Lasso+Ridge, we can easily attain their convergence rates of bias and MSE, asymptotic normality and the validity of residual bootstrap. To proceed, we will first give a brief overview of the assumptions used to get model selection consistency of the Lasso investigated by \cite{ZhaoYu2006}. Let $sign(\cdot)$ map positive entries to $1$, negative entries to $-1$ and zero to zero,
 \begin{definition}[{\bf Irrepresentable condition} \cite{Fuchs2005,Tropp,MeinshausenBuhlmann2006,ZhaoYu2006,Wainwright2009}]
There exists a positive constant vector $\eta$, such that
\begin{equation}
 |C_{21}C_{11}^{-1}sign(\beta^*_S)|\leq \mathbf{1}-\eta
 \label{eqn:IC}
\end{equation}
where $\mathbf{1}$ is a $p-s$ by $1$ vector with entries $1$ and the inequality holds element-wise.
\end{definition}

{\em Assumption (i): there exist constant $c_1+c_2<c_3\leq 1$ and $M>0$ so that
\begin{equation}
 n^{\frac{1-c_3}{2}} \mathop {min}\limits_{1\leq i \leq s}|\beta_i^*| \geq M.
 \label{eqn:cond4}
\end{equation}
}

{\em Assumption (j): suppose that the tuning parameter $\lambda_n$ in the definition of the Lasso satisfies $\lambda_n \propto n^{\frac{1+c_4}{2}}$ with $c_2<c_4<c_3-c_1$.
}
\ \\

The Irrepresentable Condition is a key assumption on the design that can be weakened by e.g. using stability selection instead of the Lasso. \cite{MeinshausenBuhlmann2010} showed that stability selection can achieve the same convergence rate of the probability of selecting wrong models as the Lasso does but under a less restrictive sparse eigenvalue condition. We will give a brief overview of stability selection combined with randomized Lasso in sparse linear regression in the Appendix~\ref{B} and discuss the asymptotic properties of two stage estimators based on stability selection in the next section. Assumption (i) requires a gap of size $n^{\frac{c_3}{2}}$ between the decay rate of $\beta^*$ and $n^{-\frac{1}{2}}$ thus preventing the estimation from being dominated by the noise terms. It is weaker than $\mathop {min}\limits_{1\leq i \leq s}|\beta_i^*| \geq M$, which was assumed by \cite{Huang2008} who studied the asymptotic properties of the Bridge estimator. \cite{FanLv2008} also proposed a similar constraint $\mathop {min}\limits_{1\leq i \leq s}|\beta_i^*| \geq \frac{M}{n^\kappa},\ 0 \leq \kappa < \frac{1}{2}$ to show model selection consistency of the Sure Independent Screening.

Now, we are ready to state our main results. Let $\tilde \beta$ and $\tilde \beta^*$ denote the Lasso+mLS (or Lasso+Ridge) based on the original data $(X,Y)$ and that based on the resample $(X,Y^*)$ respectively, then we can define $\hat \Psi$, $\Psi$, $T_n$, $T_n^*$, $G_n$ and $G_n^*$ as Section 2 does.

Firstly, combining Theorem~\ref{theorem: asymptotic bias and variance} and the model selection property of the Lasso (see Lemma~\ref{lemma: model selection} in Appendix~\ref{A}), we can attain the following corollary:

\begin{corollary}
\label{corollary: asymptotic bias and variance}
Suppose that assumptions (a)-(d), (g), (i), (j) and the Irrepresentable condition $\eqref{eqn:IC}$ are satisfied, if $\tau_n \propto \frac{1}{n}$ and $\mu_n \propto e^{-n^{c_2}/4}$, then the bias and the MSE of Lasso+mLS and Lasso+Ridge estimators $\tilde \beta$ satisfy
\begin{equation}
|| E \tilde \beta - \beta^*||_2^2 = o(e^{-n^{c_2}/2})\rightarrow 0,\ as \ n\rightarrow \infty,
\label{eqn:corollarybias}
\end{equation}
\begin{equation}
E||\tilde \beta - \beta^*||_2^2 = O( \frac{\sigma^2}{\Lambda_{min}} \frac{s}{n}).
\label{eqn:corollarymse}
\end{equation}
\end{corollary}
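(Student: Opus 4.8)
The plan is to invoke Theorems~\ref{theorem: asymptotic bias and variance} and \ref{theorem: asymptotic bias and variance ridge} as black boxes and then control each term on their right-hand sides using the specified scalings of $\tau_n$, $\mu_n$ and the exponential model-selection rate of the Lasso. The first ingredient I would set up is the model selection property of the Lasso: under the Irrepresentable condition together with assumptions (a)-(d), (i), (j), the result of \cite{ZhaoYu2006} (quoted here as Lemma~\ref{lemma: model selection} in Appendix~\ref{A}) gives $P(\hat S \neq S) = o(e^{-n^{c_2}})$. This is the only place the Irrepresentable condition and assumptions (i), (j) enter; everything else is bookkeeping on the bounds from Section~2. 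I would also record the elementary consequences of assumptions (b), (c), (g): $\mathrm{tr}(C_{11}^{-1}) \leq s/\Lambda_{min}$ (since $C_{11}$ is $s\times s$ with all eigenvalues $\geq \Lambda_{min}$), $\frac{1}{n}\|X\beta^*\|_2^2 = O(n)$ by (g), and $\|\beta^*\|_2^2 = O(n)$ (which follows from (g) under the standardization, or directly from the argument given after assumption (g)).

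For the bias bound \eqref{eqn:corollarybias} in the mLS case, I would take $\tau_n \propto 1/n$ so that $\tau_n^{-2} = O(n^2)$, and note $\tau_n^2 \leq \Lambda_{min}(C_{11})$ holds for $n$ large since $\tau_n^2 \to 0$ while $\Lambda_{min}(C_{11}) \geq \Lambda_{min} > 0$; this legitimizes applying Theorem~\ref{theorem: asymptotic bias and variance}. The braces on the right of \eqref{eqn:theo1} are then $O(n^3)$ at worst (the dominant piece being $\tau_n^{-2}\frac{1}{n}\|X\beta^*\|_2^2 = O(n^2)\cdot O(n) = O(n^3)$), so multiplying by $P(\hat S \neq S) = o(e^{-n^{c_2}})$ gives $o(n^3 e^{-n^{c_2}}) = o(e^{-n^{c_2}/2})$, since any fixed polynomial is absorbed by $e^{n^{c_2}/2}$. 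For the Ridge case I would choose $\mu_n \propto e^{-n^{c_2}/4}$ and feed this into \eqref{eqn:theo21}: the first term is $\frac{2\mu_n^2}{n^2\Lambda_{min}^2}\|\beta^*\|_2^2 = O(e^{-n^{c_2}/2}\cdot n^{-2}\cdot n) = o(e^{-n^{c_2}/2})$; the second term is $2P(\hat S\neq S)\frac{n}{\mu_n}\{\frac1n\|X\beta^*\|_2^2+\sigma^2\} = o(e^{-n^{c_2}})\cdot n\cdot e^{n^{c_2}/4}\cdot O(n) = o(n^2 e^{-3n^{c_2}/4}) = o(e^{-n^{c_2}/2})$. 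Hence \eqref{eqn:corollarybias} holds for both estimators, and in particular the bias tends to $0$.

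For the MSE bound \eqref{eqn:corollarymse}, in the mLS case \eqref{eqn:theo2} gives $E\|\tilde\beta-\beta^*\|_2^2 \leq \frac{\sigma^2}{n}\mathrm{tr}(C_{11}^{-1}) + 8\sqrt{P(\hat S\neq S)}\{\|\beta^*\|_2^2 + \tau_n^{-2}\frac1n\|X\beta^*\|_2^2 + \tau_n^{-2}\sigma^2\}$. The first term is $\leq \frac{\sigma^2}{\Lambda_{min}}\frac{s}{n}$ by the trace bound. For the second, $\sqrt{P(\hat S\neq S)} = o(e^{-n^{c_2}/2})$ and the braces are $O(n^3)$, so the whole second term is $o(n^3 e^{-n^{c_2}/2})$, which is not merely $o(1)$ but is in fact $o(s/n)$ — any polynomial times $e^{-n^{c_2}/2}$ decays faster than $n^{-1}$ — so it is absorbed into the $O(\sigma^2 s/(n\Lambda_{min}))$ term. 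In the Ridge case I would use \eqref{eqn:theo22} with $\mu_n \propto e^{-n^{c_2}/4}$: the leading term $\frac{\sigma^2}{n}\mathrm{tr}\{(C_{11}+\frac{\mu_n}{n}I)^{-2}C_{11}\}$ is bounded by $\frac{\sigma^2}{n}\mathrm{tr}(C_{11}^{-1}) \leq \frac{\sigma^2}{\Lambda_{min}}\frac{s}{n}$ (since $(C_{11}+\frac{\mu_n}{n}I)^{-2}C_{11} \preceq C_{11}^{-1}$), the term $\frac{\mu_n^2}{n^2\Lambda_{min}^2}\|\beta^*\|_2^2 = o(e^{-n^{c_2}/2}) = o(s/n)$, and the last term is $2\sqrt{P(\hat S\neq S)}\{\|\beta^*\|_2^2 + \frac{n}{\mu_n}(\frac1n\|X\beta^*\|_2^2+\sigma^2)\} = o(e^{-n^{c_2}/2})\cdot O(n^2 e^{n^{c_2}/4}) = o(e^{-n^{c_2}/4}) = o(s/n)$. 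Combining these gives \eqref{eqn:corollarymse}.

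The only genuinely non-routine point is verifying that the perturbed trace $\mathrm{tr}\{(C_{11}+\frac{\mu_n}{n}I)^{-2}C_{11}\}$ is still bounded by $s/\Lambda_{min}$ uniformly; this follows by diagonalizing $C_{11}$ and observing that $x/(x+\frac{\mu_n}{n})^2 \leq 1/x$ for every eigenvalue $x \geq \Lambda_{min}$, so no extra hypothesis beyond (c) is needed. The main obstacle is therefore not any single estimate but rather the bookkeeping: one must make sure that the chosen rates $\tau_n \propto 1/n$ and $\mu_n \propto e^{-n^{c_2}/4}$ simultaneously keep the polynomially-growing braces under control and that the exponential factor $\sqrt{P(\hat S\neq S)}$ dominates them with room to spare so that the residual terms land below the oracle rate $s/n$ rather than merely below $1$. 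Everything else reduces to the elementary inequalities recorded above together with the citation of Lemma~\ref{lemma: model selection}.
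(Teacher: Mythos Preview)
Your proposal is correct and follows essentially the same approach as the paper: invoke Lemma~\ref{lemma: model selection} to get $P(\hat S\neq S)=o(e^{-n^{c_2}})$, use assumption (c) to bound $\mathrm{tr}(C_{11}^{-1})\leq s/\Lambda_{min}$, use assumption (g) to control $\|\beta^*\|_2^2$ and $\frac{1}{n}\|X\beta^*\|_2^2$, and then read off the rates from Theorems~\ref{theorem: asymptotic bias and variance} and \ref{theorem: asymptotic bias and variance ridge}. The paper's own proof is terser---it only writes out the mLS case and dismisses Ridge as ``similar''---whereas you explicitly carry out both, including the eigenvalue argument for $\mathrm{tr}\{(C_{11}+\tfrac{\mu_n}{n}I)^{-2}C_{11}\}\leq \mathrm{tr}(C_{11}^{-1})$, which the paper leaves implicit.
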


\begin{proof} We only consider the Lasso+mLS since the proof for Lasso+Ridge is similar. By Lemma~\ref{lemma: model selection} in Appendix~\ref{A}, we have
\[ P(\hat S \neq S)\leq P(sign(\hat \beta(\lambda_n)) \neq sign( \beta^*)) \leq o(e^{-n^{c_2}}). \]
From assumption (c) \eqref{eqn:cond2}, we know that $\Lambda_{min} (C_{11})  \geq \Lambda_{min} >0$. And since $C_{11}$ is an $s$ by $s$ matrix, we have
\[ \frac{\sigma^2}{n} tr(C_{11}^{-1}) \leq \frac{\sigma^2s}{n}\Lambda_{min}^{-1}. \]
Under condition (g) or equation \eqref{asum:f}, we have
\[ ||\beta^*||_2^2 \leq \Lambda_{min}^{-1} \frac{1}{n}||X\beta^*||_2^2 = O(n). \]
The corollary is obtained directly from Theorem~\ref{theorem: asymptotic bias and variance}.
\end{proof}

Corollary~\ref{corollary: asymptotic bias and variance} indicates that Lasso+mLS and Lasso+Ridge are asymptotically unbiased. In particular, their biases decay at an exponential rate and their MSEs achieve the oracle convergence rate of $\frac{s}{n}$ which is much faster than that of the Lasso. (Under the restricted eigenvalue condition, the Lasso achieves convergence rate of $\frac{s \log p}{n}$ for MSE, see for example \cite{Raskutti2011,ZhangHuang2008}).

Secondly, combining Theorem~\ref{theorem: asymptotic normality} and Lemma~\ref{lemma: model selection}, we can easily derive the asymptotic normality of Lasso+mLS and Lasso+Ridge.

\begin{corollary}
\label{corollary: asymptotic normality}
Suppose conditions (a)-(e), (i), (j) and the Irrepresentable Condition $\eqref{eqn:IC}$ are satisfied, then Lasso+mLS and Lasso+Ridge are asymptotically normal\footnote{For Lasso+Ridge, we need assumption (g) to hold. Due to the restricted space, we don't state it separately.}. That is,
\begin{equation}
 \mathop {sup}\limits_{t \in R^s} | \hat \Psi(t) - \Psi(t)|  \rightarrow 0, \ as\ n\rightarrow \infty.
 \label{eqn:normality}
\end{equation}
\end{corollary}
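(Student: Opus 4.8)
The plan is to obtain Corollary~\ref{corollary: asymptotic normality} as an immediate consequence of Theorem~\ref{theorem: asymptotic normality}, whose only non-structural hypothesis is model selection consistency $P(\hat S \neq S) = o(1)$ of the first-stage procedure. So the single substantive step is to check that, under the Irrepresentable condition together with assumptions (a)--(e), (i) and (j), the Lasso recovers the true support with probability tending to one. This is exactly the content of Lemma~\ref{lemma: model selection} in Appendix~\ref{A} (the sign-consistency result of \cite{ZhaoYu2006} specialized to the present scalings), which in fact gives the stronger bound
\[
 P(\hat S \neq S) \le P\big(\mathrm{sign}(\hat\beta(\lambda_n)) \neq \mathrm{sign}(\beta^*)\big) = o\big(e^{-n^{c_2}}\big),
\]
where the beta-min gap in assumption (i) prevents the nonzero coordinates from being swamped by noise and the rate $\lambda_n \propto n^{(1+c_4)/2}$ in assumption (j) balances shrinkage against the Irrepresentable margin $\eta$. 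In particular $P(\hat S \neq S) = o(1)$, so the consistency hypothesis of Theorem~\ref{theorem: asymptotic normality} is met.

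Next I would verify that the remaining hypotheses of Theorem~\ref{theorem: asymptotic normality} are in force for the particular selection rule $\hat S = \{j: \hat\beta_j(\lambda_n) \neq 0\}$. Assumptions (a)--(e) are assumed verbatim in the statement of the corollary. The condition $\tau_n^2 \le \Lambda_{min}(C_{11})$ used in Section~2 is automatic for $n$ large, since assumption (e) gives $\tau_n \propto 1/n \to 0$ while assumption (c) bounds $\Lambda_{min}(C_{11}) \ge \Lambda_{min} > 0$ away from zero; hence on the event $\{\hat S = S\}$ the modified Least Squares estimator coincides with the oracle OLS $\beta_S^{OLS} = (X_S^T X_S)^{-1} X_S^T Y$. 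For the Lasso+Ridge case, the footnote of Theorem~\ref{theorem: asymptotic normality} additionally requires the technical assumption (g), $\frac1n\|X\beta^*\|_2^2 = O(n)$, which we adjoin (as recorded in the corollary's own footnote); this is what keeps the Ridge shrinkage bias $\mu_n^2 n^{-2}\Lambda_{min}^{-2}\|\beta^*\|_2^2$ and related remainder terms in Theorem~\ref{theorem: asymptotic bias and variance ridge} under control once $\mu_n \propto 1/n$.

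With all hypotheses of Theorem~\ref{theorem: asymptotic normality} checked, its conclusion $\sup_{t\in\mathbb R^s}|\hat\Psi(t) - \Psi(t)| \to 0$ transfers directly, where $\hat\Psi$ is the law of $\sqrt n(\tilde\beta_S - \beta^*_S)$ for $\tilde\beta$ either Lasso+mLS or Lasso+Ridge and $\Psi$ is the $N(0,\sigma^2 C_{11}^{-1})$ distribution function. There is essentially no obstacle here beyond bookkeeping: the analytic content — decomposing $\tilde\beta_S - \beta^*_S$ on $\{\hat S = S\}$ versus its complement, applying a Lindeberg central limit argument to the oracle OLS term, and showing that the remainder on $\{\hat S \neq S\}$ is negligible — is already carried out inside the proof of Theorem~\ref{theorem: asymptotic normality} in Appendix~\ref{C}. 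The only point deserving a line of care is that for Lasso+Ridge the remainder on $\{\hat S \neq S\}$ is inflated by a factor of order $n/\mu_n \propto n^2$ coming from Theorem~\ref{theorem: asymptotic bias and variance ridge}; it is precisely the exponential rate $o(e^{-n^{c_2}})$ supplied by Lemma~\ref{lemma: model selection}, rather than a mere $o(1)$, that kills this polynomial blow-up and lets the same argument go through.
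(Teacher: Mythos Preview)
Your proposal is correct and follows exactly the paper's approach: the corollary is obtained by combining Theorem~\ref{theorem: asymptotic normality} with Lemma~\ref{lemma: model selection} to supply the model-selection consistency $P(\hat S\neq S)=o(1)$. Two small inaccuracies in your elaboration are worth flagging: the proof of Theorem~\ref{theorem: asymptotic normality} uses \emph{exact} normality of $\sqrt{n}(X_S^TX_S)^{-1}X_S^T\epsilon$ under the Gaussian assumption~(a), not a Lindeberg CLT; and that proof bounds $|\hat\Psi(t)-\Psi(t)|\le 2P(\hat S\neq S)$ directly, so for the corollary only $o(1)$ is needed and the exponential rate from Lemma~\ref{lemma: model selection} plays no special role here (it matters for the MSE and bootstrap results, not for asymptotic normality).
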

\ \\
{\bf Remark 3.1.} For fixed $p$ and fixed $\beta^*$, \cite{KnightFu2000} showed that for $\lambda_n = o(\sqrt{n})$, the Lasso estimator is also asymptotically normal $N(0,\sigma^2C^{-1})$ under conditions $\frac{1}{n} X^TX \rightarrow C$ and $\frac{1}{n} \mathop {max}\limits_{1\leq i \leq n} x_i^Tx_i \rightarrow 0$. Then the asymptotic covariance matrix of the rescaled and centered Lasso estimator $\sqrt{n}( \hat \beta_S - \beta^*_S )$ is $\sigma^2$  multiplied by
$$C_{11}^{-1} + C_{11}^{-1} C_{12}(C_{22}-C_{21}C_{11}^{-1}C_{12})C_{21}C_{11}^{-1}\ \  (\succeq C_{11}^{-1})$$
where matrix $A\succeq B$ means $A-B$ is positive definite. Therefore, Lasso+mLS and Lasso+Ridge have smaller asymptotic covariance matrix (which is $\sigma^2C_{11}^{-1}$) compared with the Lasso and hence reduce estimation uncertainty. Moreover, as pointed out by \cite{FanLi2001}, one cannot find a $\lambda_n$ such that the Lasso estimator is model selection consistent and asymptotically normal ($\sqrt{n}-$consistency) simultaneously. In this sense, Lasso+mLS and Lasso+Ridge improve the performance of the Lasso.

Lastly, we verify the validity of residual bootstrap after Lasso+mLS and Lasso+Ridge. We would like to begin with showing an interesting result that the Lasso estimator $\hat \beta^*$ based on the resample $(X,Y^*)$ also has model selection consistency.
\begin{equation}
 \hat \beta^* = \mathop {argmin}\limits_{\beta} \left\{ || {Y^* - X \tilde \beta } ||^2  + \lambda_n || {\beta } ||_1 \right\}.
 \label{eqn:bootstraplasso}
\end{equation}
Recall that $\hat S= \{ j \in \{1,2,...,p\}:\ \tilde \beta_j \neq 0 \}$ and $\hat S^* = \{ j \in \{1,2,...,p\}:\ \hat \beta_j^* \neq 0 \}$ are the sets of selected predictors by $\tilde \beta$ and $\hat \beta^*$ respectively.

\begin{lemma}
\label{lemma:bootstrap selection consistency}
Suppose conditions (a)-(e), (g)-(j), (dd) and the Irrepresentable Condition $\eqref{eqn:IC}$ are satisfied, then the following holds,
\[ P^*(\hat S^* \neq \hat S) = o_p(e^{-n^{c_2}}). \]
\end{lemma}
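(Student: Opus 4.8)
\emph{Proof idea.} The plan is to condition on the original sample and treat the resample as a new sparse linear regression, $Y^* = X_S\tilde\beta_S + \epsilon^*$, valid on the event $\{\hat S = S\}$, and then rerun the Lasso sign-consistency argument of Lemma~\ref{lemma: model selection} for this model, the only change being that the Gaussian noise is replaced by the bounded i.i.d. bootstrap noise $\epsilon^*$.

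First I would fix a ``good event'' $B_n$, measurable with respect to the original data $(X,\epsilon)$ and with $P(B_n)\to 1$, on which: (i) $\hat S = S$, so that $\tilde\beta$ is supported on $S$ and $Y^* = X_S\tilde\beta_S + \epsilon^*$ with $\epsilon^*$ i.i.d. from the centered residuals; (ii) the bootstrap ``true'' signal obeys the beta-min bound $n^{(1-c_3)/2}\min_{j\in S}|\tilde\beta_j|\ge M/2$; and (iii) the resampling distribution is tame, $\max_{1\le i\le n}|\hat\epsilon_i-\hat\mu|\le M_n$ with $M_n = C\sqrt{\log n}$ and $\frac1n\sum_{i=1}^n(\hat\epsilon_i-\hat\mu)^2\in[\sigma^2/2,\,2\sigma^2]$. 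Item (i) is Lemma~\ref{lemma: model selection}. For (ii)--(iii), on $\{\hat S = S\}$ one has $\hat\epsilon_i-\hat\mu = \epsilon_i-\bar\epsilon - x_{i,S}^T(\tilde\beta_S-\beta^*_S)$ with $\hat\mu = \bar\epsilon$ (the columns of $X$ are centered, assumption (b)); since $\|\tilde\beta_S-\beta^*_S\|_2 = O_p(\sqrt{s/n})$ by Corollary~\ref{corollary: asymptotic bias and variance} and $c_1<c_3$ (from $c_1+c_2<c_3$), we get $\|\tilde\beta_S-\beta^*_S\|_\infty = o_p(n^{-(1-c_3)/2})$, hence (ii) with probability $\to 1$; and with assumption (h) ($\max_i\|x_{i,S}\|_2^2 = o(n^{1/2})$) and $s^2/n\to 0$, the perturbation $x_{i,S}^T(\tilde\beta_S-\beta^*_S)$ is $o_p(1)$ uniformly in $i$ and $\frac1n\|X_S(\tilde\beta_S-\beta^*_S)\|_2^2 = o_p(1)$, so the centered residuals inherit the $O_p(\sqrt{\log n})$ maximum and the $\sigma^2$ empirical variance of the Gaussian errors, giving (iii).

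On $B_n$ the task becomes: the Lasso with $\lambda_n\propto n^{(1+c_4)/2}$ (assumption (j)), applied to $Y^* = X_S\tilde\beta_S + \epsilon^*$ with the same design $X$ (still satisfying the Irrepresentable condition and assumptions (b)--(d)) and i.i.d. noise $\epsilon^*$ that is centered, bounded by $M_n$, and has variance at most $2\sigma^2$, recovers $S$. I would follow the proof of Lemma~\ref{lemma: model selection} (the argument of \cite{ZhaoYu2006}) step by step: Lasso sign consistency reduces to bounding two linear functionals of the noise --- one, of the form $\frac1n C_{11}^{-1}X_S^T\epsilon^*$, which via (ii) must not swamp the signal, and one, of the form $\frac1n(X_{S^c}^T-C_{21}C_{11}^{-1}X_S^T)\epsilon^*$, which must stay below $\tfrac{\lambda_n}{2n}\eta$ once the Irrepresentable condition is used on the deterministic part. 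For Gaussian noise these are handled by Gaussian tail bounds; for the bounded i.i.d. $\epsilon^*$ I would replace each by Hoeffding's inequality, at the cost of at most an extra factor $M_n^2 = O(\log n)$ in the exponent. Since the exponents produced in the Gaussian case are powers of $n$ strictly larger than $n^{c_2}$ --- this is exactly what assumptions (i) and (j) are calibrated to ensure, given $p = O(e^{n^{c_2}})$ --- dividing by $\log n$ still leaves them of larger polynomial order than $n^{c_2}$, so one obtains $P^*(\hat S^*\neq S) = o(e^{-n^{c_2}})$ on $B_n$; and $\hat S = S$ there, so $P^*(\hat S^*\neq\hat S) = o(e^{-n^{c_2}})$ on $B_n$. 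Then for any $\delta>0$, writing $r_n$ for this deterministic $o(e^{-n^{c_2}})$ bound, $P(P^*(\hat S^*\neq\hat S) > \delta e^{-n^{c_2}})\le P(B_n^c) + P(B_n\cap\{r_n>\delta e^{-n^{c_2}}\})$; the first term $\to 0$ and the second is $0$ for large $n$, which gives $P^*(\hat S^*\neq\hat S) = o_p(e^{-n^{c_2}})$.

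The hard part will be the middle step, i.e.\ porting the exponential-rate Lasso selection argument from Gaussian to the empirical bootstrap noise: one must (a) establish $\max_i|\hat\epsilon_i-\hat\mu| = O_p(\sqrt{\log n})$ and the variance stabilization in (iii), so that $\epsilon^*$ is conditionally sub-Gaussian with a controlled parameter, and (b) check that replacing Gaussian tails by Hoeffding's inequality, which inflates exponents by $M_n^2$, does not destroy the rate $e^{-n^{c_2}}$ --- which uses that $M_n$ is only polylogarithmic and that the slack between $n^{c_4}$ (resp.\ $n^{c_3}$) and $n^{c_2}$ in (i)--(j) is a genuine power of $n$. Transferring the beta-min condition from $\beta^*$ to $\tilde\beta_S$ is a secondary, routine point, relying on the MSE rate $O(s/n)$ and $c_1+c_2<c_3$.
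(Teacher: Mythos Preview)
Your proposal is correct and follows essentially the same route as the paper: condition on $\{\hat S=S\}$, check that the ``starred'' model $Y^*=X\tilde\beta+\epsilon^*$ satisfies the hypotheses of the Zhao--Yu sign-consistency lemma (same design, hence (b), (c), (d), (j) and the Irrepresentable condition carry over once $\mathrm{sign}(\tilde\beta_S)=\mathrm{sign}(\beta^*_S)$; the beta-min (i) transfers from $\beta^*$ to $\tilde\beta$ by $\sqrt{n}$-consistency; the noise $\epsilon^*$ has light enough tails), and re-apply the lemma.

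The only organizational difference is in how the tail control on $\epsilon^*$ is handled. The paper isolates this as a separate result (Lemma~\ref{lemma:subgaussian}) showing that, with probability tending to one, the centered residuals form a sub-Gaussian family with \emph{fixed} constants $C^*,c^*$ (in fact $c^*=1/(36\sigma^2)$), and then simply invokes Remark~A.1 that sub-Gaussian noise suffices in Lemma~\ref{lemma: model selection}. You instead propose to bound $\max_i|\hat\epsilon_i-\hat\mu|$ by $O_p(\sqrt{\log n})$ and run Hoeffding inline, absorbing the extra $\log n$ into the polynomial slack between $c_4$ (resp.\ $c_3$) and $c_2$. Both work; the paper's version is slightly cleaner because it yields an $n$-independent sub-Gaussian parameter and lets one quote the selection lemma verbatim, whereas your version is more self-contained but requires tracking the $\log n$ factor through the Zhao--Yu argument. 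Your informal claim that ``the same design $X$ still satisfies the Irrepresentable condition'' tacitly uses $\mathrm{sign}(\tilde\beta_S)=\mathrm{sign}(\beta^*_S)$ on the good event, which your item~(ii) does deliver; the paper makes this explicit.
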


Now, applying Theorem~\ref{theorem:bootstraplassovalid}, Lemma~\ref{lemma:bootstrap selection consistency} and Lemma~\ref{lemma: model selection}, we can state our main result:
\begin{corollary}
\label{corollary:bootstraplassovalid}
Suppose that conditions (a)-(e), (g)-(j), (dd) and the Irrepresentable Condition $\eqref{eqn:IC}$ are satisfied, then residual bootstrap after Lasso+mLS or Lasso+Ridge is consistent in the sense that
\begin{equation}
 d(G_n,G_n^*)\rightarrow_p 0.
 \label{eqn:bootstrap}
\end{equation}
\end{corollary}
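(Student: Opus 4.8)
The plan is to verify that the hypotheses of Theorem~\ref{theorem:bootstraplassovalid} hold when the Lasso is the model selection procedure, since that theorem delivers exactly the conclusion $d(G_n, G_n^*) \to_p 0$. Theorem~\ref{theorem:bootstraplassovalid} requires assumptions (a)--(h) together with (dd). The bulk of these are assumed directly in the statement of the corollary: (a)--(e), (g), (h)\footnote{Note: (h) is implicit in the corollary's list; if it is not stated it should be added, since Theorem~\ref{theorem:bootstraplassovalid} needs it.} and (dd) are given, and (i), (j) plus the Irrepresentable condition are the extra ingredients needed to control the Lasso. So the only work is to establish assumption (f), namely that both $P(\hat S \neq S) = o(e^{-n^{c_2}})$ and $P^*(\hat S^* \neq \hat S) = o_p(e^{-n^{c_2}})$.

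First I would handle the original-data half of (f). By Lemma~\ref{lemma: model selection} in Appendix~\ref{A} (the model selection consistency of the Lasso under the Irrepresentable condition and assumptions (a)--(d), (i), (j), following \cite{ZhaoYu2006}), one has
\[
 P(\hat S \neq S) \leq P\big(\mathrm{sign}(\hat\beta(\lambda_n)) \neq \mathrm{sign}(\beta^*)\big) = o(e^{-n^{c_2}}),
\]
which is precisely the first requirement of (f). Second, I would handle the resample half: this is exactly the content of Lemma~\ref{lemma:bootstrap selection consistency}, which under conditions (a)--(e), (g)--(j), (dd) and the Irrepresentable condition gives $P^*(\hat S^* \neq \hat S) = o_p(e^{-n^{c_2}})$. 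Thus both clauses of assumption (f) are met. With (a)--(h) and (dd) all verified, Theorem~\ref{theorem:bootstraplassovalid} applies verbatim and yields $d(G_n, G_n^*) \to_p 0$.

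In short, the corollary is a matter of assembling three prior results — Theorem~\ref{theorem:bootstraplassovalid}, Lemma~\ref{lemma: model selection}, and Lemma~\ref{lemma:bootstrap selection consistency} — so there is no genuinely new obstacle at this level; the real difficulty has already been quarantined into Lemma~\ref{lemma:bootstrap selection consistency}, whose proof must show that the Lasso, when run on the bootstrapped response $Y^* = X\tilde\beta + \epsilon^*$, still recovers $\hat S$ with exponentially small error probability. The delicate point there is that $\tilde\beta$ is random (it is Lasso+mLS or Lasso+Ridge on the original data) and that the bootstrap noise $\epsilon^*$ is drawn from the centered residuals rather than from a Gaussian; one controls this by conditioning on the high-probability event $\{\hat S = S\}$, on which $X\tilde\beta$ is close to $X\beta^*$ (using the MSE bound of Corollary~\ref{corollary: asymptotic bias and variance}), and by showing the empirical residual distribution is close enough to $N(0,\sigma^2)$ that the Gaussian-noise Lasso selection argument of \cite{ZhaoYu2006} goes through with the tuning parameter choice of assumption (j). But all of that belongs to the proof of the lemma, which I am entitled to invoke; at the level of the corollary the argument is the short verification above.
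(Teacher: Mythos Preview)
Your proposal is correct and matches the paper's approach exactly: the paper states that the corollary follows by ``applying Theorem~\ref{theorem:bootstraplassovalid}, Lemma~\ref{lemma:bootstrap selection consistency} and Lemma~\ref{lemma: model selection}'' and then omits the details as straightforward, which is precisely the verification you have written out. Your parenthetical about assumption (h) is unnecessary, since (h) is already included in the range (g)--(j) listed in the corollary's hypotheses.
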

\ \\
{\bf Remark 3.2.} In practice, if the Irrepresentable Condition does not hold, one needs to try other model selection methods, e.g., Bolasso \cite{Bach2008} or stability selection. As stated before, as long as the probabilities of selecting wrong models $P(\hat S \neq S) $ and $P^*(\hat S^* \neq \hat S)$ decay at an exponential rate $o(e^{-n^{c_2}})$, residual bootstrap after two stage estimator gives valid approximation.

Corollary~\ref{corollary:bootstraplassovalid} indicates that residual bootstrap after Lasso+mLS or Lasso+Ridge gives a valid approximation to the distribution of $T_n$ and then can be used to construct confidence intervals and test for parameter estimation. The proof is straightforward and we omit it.

\section{Asymptotic Properties of Modified Least Squares and Ridge after Stability Selection}

If the Irrepresentable Condition is violated, the Lasso cannot correctly select the true model. In this case, one needs to apply other model selection criteria instead of the Lasso. Stability selection is one popular method among many others. \cite{MeinshausenBuhlmann2010} showed that it can achieve the same convergence rate of the probability of selecting wrong models but under a less restrictive sparse eigenvalue condition. More details are given in the Appendix~\ref{B}.

Let $\tilde \beta$ and $\tilde \beta^*$ denote the modified Least Squares (or Ridge) after stability selection (SS+mLS or SS+Ridge) based on the original data $(X,Y)$ and that based on the resample $(X,Y^*)$ respectively, then we can define $\hat \Psi$, $\Psi$, $T_n$, $T_n^*$, $G_n$ and $G_n^*$ as Section 2 does. Applying the asymptotic properties of Select+mLS and Select+Ridge in Section 2, we can derive without any proofs the following corollaries parallel to Corollary~\ref{corollary: asymptotic bias and variance}, \ref{corollary: asymptotic normality}, \ref{corollary:bootstraplassovalid}.

\begin{corollary}
\label{corollary: asymptotic bias and variance for stability selection}
Assume the conditions in Lemma~\ref{lemma: model selection stability} in the Appendix~\ref{B} and conditions (b)-(c) and (g) in the previous sections are satisfied, if $\tau_n \propto \frac{1}{n}$ and $\mu_n \propto e^{-n^{c_2}/4}$, then the bias and the MSE of the SS+mLS and SS+Ridge $\tilde \beta$ satisfy
\begin{equation}
|| E \tilde \beta - \beta^*||_2^2 = o(e^{-n^{c_2}/2})\rightarrow 0,\ n\rightarrow \infty,
\label{eqn:corollarybias}
\end{equation}
\begin{equation}
E||\tilde \beta - \beta^*||_2^2 = O( \frac{\sigma^2}{\Lambda_{min}} \frac{s}{n}).
\label{eqn:corollarymse}
\end{equation}
\end{corollary}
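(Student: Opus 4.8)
The plan is to derive Corollary~\ref{corollary: asymptotic bias and variance for stability selection} as a direct specialization of Theorems~\ref{theorem: asymptotic bias and variance} and \ref{theorem: asymptotic bias and variance ridge}, exactly as Corollary~\ref{corollary: asymptotic bias and variance} was obtained from them in the Lasso case. The only structural difference is that the model selection guarantee now comes from Lemma~\ref{lemma: model selection stability} (stability selection under the sparse eigenvalue condition) rather than from Lemma~\ref{lemma: model selection} (Lasso under the Irrepresentable condition). So the first step is to invoke Lemma~\ref{lemma: model selection stability} to get $P(\hat S \neq S) = o(e^{-n^{c_2}})$ for the stability selection procedure; this replaces the opening line of the proof of Corollary~\ref{corollary: asymptotic bias and variance}.

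Next I would bound the deterministic quantities appearing on the right-hand sides of \eqref{eqn:theo1}--\eqref{eqn:theo2} and \eqref{eqn:theo21}--\eqref{eqn:theo22}. By assumption (c), $\Lambda_{min}(C_{11}) \geq \Lambda_{min} > 0$, and since $C_{11}$ is an $s \times s$ matrix this gives $\frac{\sigma^2}{n} \operatorname{tr}(C_{11}^{-1}) \leq \frac{\sigma^2 s}{n \Lambda_{min}}$. By assumption (g), $\frac{1}{n}\|X\beta^*\|_2^2 = O(n)$, hence also $\|\beta^*\|_2^2 \leq \Lambda_{min}^{-1} \frac{1}{n}\|X\beta^*\|_2^2 = O(n)$ and $\sigma^2 = O(1)$. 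For SS+mLS with $\tau_n \propto 1/n$, the bracketed term in \eqref{eqn:theo1}--\eqref{eqn:theo2} is $\|\beta^*\|_2^2 + \tau_n^{-2}\frac{1}{n}\|X\beta^*\|_2^2 + \tau_n^{-2}\sigma^2 = O(n) + O(n^2)\cdot O(n) + O(n^2) = O(n^3)$, which is polynomial in $n$. Since $\sqrt{P(\hat S \neq S)} = o(e^{-n^{c_2}/2})$ decays faster than any polynomial, the second term of \eqref{eqn:theo2} is $o(e^{-n^{c_2}/2})$, so the MSE is dominated by $\frac{\sigma^2 s}{n \Lambda_{min}} = O\!\left(\frac{\sigma^2}{\Lambda_{min}}\frac{s}{n}\right)$; and $2 P(\hat S \neq S)\cdot O(n^3) = o(e^{-n^{c_2}/2})$ gives the bias bound. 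For SS+Ridge with $\mu_n \propto e^{-n^{c_2}/4}$, the bias term $\frac{2\mu_n^2}{n^2 \Lambda_{min}^2}\|\beta^*\|_2^2 = O(e^{-n^{c_2}/2})\cdot O(1/n) = o(e^{-n^{c_2}/2})$, and the selection-error term $2P(\hat S \neq S)\frac{n}{\mu_n}\{\frac{1}{n}\|X\beta^*\|_2^2 + \sigma^2\} = o(e^{-n^{c_2}})\cdot O(n e^{n^{c_2}/4}) \cdot O(n) = o(e^{-n^{c_2}/2})$, so \eqref{eqn:corollarybias} holds; for \eqref{eqn:corollarymse}, note $\operatorname{tr}\{(C_{11} + \frac{\mu_n}{n}I)^{-2}C_{11}\} \leq \operatorname{tr}(C_{11}^{-1}) \leq s/\Lambda_{min}$ (since adding $\frac{\mu_n}{n}I$ only increases eigenvalues), the $\frac{\mu_n^2}{n^2\Lambda_{min}^2}\|\beta^*\|_2^2$ term is negligible, and the $\sqrt{P(\hat S \neq S)}$ term is again $o(e^{-n^{c_2}/2})$, beaten by any polynomial factor of $n/\mu_n = O(ne^{n^{c_2}/4})$.

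Assembling these estimates yields both displayed conclusions, so the corollary follows without any genuinely new argument. The one point requiring care — and the place I expect any subtlety to reside — is checking that the hypotheses actually line up: Corollary~\ref{corollary: asymptotic bias and variance for stability selection} assumes ``the conditions in Lemma~\ref{lemma: model selection stability}'' plus conditions (b), (c), (g), so one must confirm that Lemma~\ref{lemma: model selection stability} already supplies the scaling conditions on $(n,p,s)$ (in particular the $c_2$ exponent governing $p = O(e^{n^{c_2}})$) needed to make the exponential bound $o(e^{-n^{c_2}})$ meaningful, together with the Gaussian assumption (a) needed for Theorems~\ref{theorem: asymptotic bias and variance} and \ref{theorem: asymptotic bias and variance ridge}; and that the ``beta-min''-type gap condition on $\min_i |\beta_i^*|$ is subsumed in Lemma~\ref{lemma: model selection stability} rather than needing to be restated. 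Because the authors explicitly say this corollary is stated ``without any proofs,'' the intended justification is precisely this substitution of Lemma~\ref{lemma: model selection stability} for Lemma~\ref{lemma: model selection} inside the proof of Corollary~\ref{corollary: asymptotic bias and variance}, and no separate proof is expected.
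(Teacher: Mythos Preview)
Your proposal is correct and follows exactly the approach intended by the paper: substitute Lemma~\ref{lemma: model selection stability} for Lemma~\ref{lemma: model selection} to obtain $P(\hat S \neq S)=o(e^{-n^{c_2}})$, then plug the same bounds on $\operatorname{tr}(C_{11}^{-1})$ (from (c)) and $\|\beta^*\|_2^2$, $\frac{1}{n}\|X\beta^*\|_2^2$ (from (g)) into Theorems~\ref{theorem: asymptotic bias and variance} and \ref{theorem: asymptotic bias and variance ridge}. The paper indeed provides no separate proof and explicitly says these corollaries follow ``without any proofs'' from the general Select+mLS/Select+Ridge results, so your reconstruction is precisely what was intended.
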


\begin{corollary}
\label{corollary: asymptotic normality for stability selection}
Assume the conditions in Lemma~\ref{lemma: model selection stability} in the Appendix~\ref{B} and conditions (b)-(e), (g)-(h) and (dd) in the previous sections are satisfied, then the SS+mLS and SS+Ridge $\tilde \beta$ are asymptotically normal\footnote{For SS+Ridge, we need assumption (g) to hold. Due to the restricted space, we don't state it separately.}, that is,
\begin{equation}
 \mathop {sup}\limits_{t \in R^s} | \hat \Psi(t) - \Psi(t)|  \rightarrow 0, \ as\ n\rightarrow \infty.
 \label{eqn:normality}
\end{equation}
\end{corollary}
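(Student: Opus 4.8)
The plan is to obtain this corollary as a direct specialization of Theorem~\ref{theorem: asymptotic normality}. That theorem already establishes $\sup_{t \in \mathbb{R}^s}|\hat\Psi(t) - \Psi(t)| \to 0$ for Select+mLS and Select+Ridge under assumptions (a)--(e) together with the single selection-dependent requirement that the first stage be consistent, $P(\hat S \neq S) = o(1)$ (and assumption (g) in the Ridge case). So the only thing to do is to check that replacing the Lasso by stability selection in the first stage still delivers this consistency, and that the remaining structural hypotheses are in force.

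First I would invoke Lemma~\ref{lemma: model selection stability} in Appendix~\ref{B}: under the sparse eigenvalue condition and the accompanying scaling assumptions of that lemma --- which, together with conditions (dd) and (h) assumed in the present corollary, are precisely the hypotheses listed --- stability selection combined with the randomized Lasso recovers the true support with $P(\hat S \neq S) \to 0$, and in fact at an exponential rate, which is much stronger than the $o(1)$ needed here. Second, I would note that the other hypotheses of Theorem~\ref{theorem: asymptotic normality} are exactly conditions (a)--(e): the Gaussian error assumption (a) is carried along with the hypotheses of Lemma~\ref{lemma: model selection stability}, while standardization (b), the lower eigenvalue bound (c) on $C_{11}$, the scaling (d) (which follows from (dd)), and the tuning-parameter rates (e) for $\tau_n$ and $\mu_n$ are all assumed in the corollary. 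With these in hand, Theorem~\ref{theorem: asymptotic normality} applies verbatim to $\tilde\beta = $ SS+mLS, and to $\tilde\beta = $ SS+Ridge once assumption (g) (which is among the assumed conditions) is added, and yields the claimed uniform convergence of $\hat\Psi$ to $\Psi$.

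The one point that deserves care --- although it is already handled inside the proof of Theorem~\ref{theorem: asymptotic normality} and so is not new work here --- is the SS+Ridge case. On the event $\{\hat S = S\}$ the estimator $\tilde\beta_S$ equals the oracle Ridge estimator $(C_{11} + (\mu_n/n)I)^{-1} X_S^T Y/n$, which carries a ridge-induced bias of order $\mu_n$; the rate restriction on $\mu_n$ in assumption (e) together with assumption (g) is what forces this bias to be negligible after rescaling by $\sqrt{n}$, uniformly as $s = s_n \to \infty$. For SS+mLS the corresponding issue is trivial: since $\tau_n \propto 1/n$, eventually $\tau_n^2 \le \Lambda_{min}(C_{11})$, so on $\{\hat S = S\}$ the estimator coincides with the oracle OLS on $S$, whose rescaled, centered version is exactly $N(0, \sigma^2 C_{11}^{-1})$ under Gaussian errors and fixed design, and the only error term comes from the event $\{\hat S \neq S\}$, whose probability tends to $0$. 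Because no ingredient beyond Theorem~\ref{theorem: asymptotic normality} and Lemma~\ref{lemma: model selection stability} enters, I would simply state the corollary without a separate proof, as the text preceding it already anticipates.
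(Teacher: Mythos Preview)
Your proposal is correct and follows exactly the route the paper takes: the paper explicitly states that Corollaries~\ref{corollary: asymptotic bias and variance for stability selection}--\ref{corollary:bootstraplassovalid for stability selection} are obtained ``without any proofs'' by applying the general Select+mLS/Select+Ridge results of Section~2, with Lemma~\ref{lemma: model selection stability} supplying the selection consistency $P(\hat S \neq S)=o(e^{-n^{c_2}})$ in place of Lemma~\ref{lemma: model selection}. Your additional remarks on the Ridge bias and the mLS thresholding are accurate and already internal to the proof of Theorem~\ref{theorem: asymptotic normality}, so they add no new burden.
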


\begin{corollary}
\label{corollary:bootstraplassovalid for stability selection}
Assume the conditions in Lemma~\ref{lemma: model selection stability} in the Appendix~\ref{B} and conditions (b)-(c), (e), (g)-(h) and (dd) in the previous sections are satisfied, then residual bootstrap after SS+mLS or SS+Ridge is consistent in the sense that
\begin{equation}
 d(G_n,G_n^*)\rightarrow_p 0.
 \label{eqn:bootstrap}
\end{equation}
\end{corollary}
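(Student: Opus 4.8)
The plan is to deduce this corollary from Theorem~\ref{theorem:bootstraplassovalid} in exactly the same way Corollary~\ref{corollary:bootstraplassovalid} is deduced from it, the only difference being that the model selection stage is stability selection rather than the Lasso. Recall that Theorem~\ref{theorem:bootstraplassovalid} requires assumptions (a)--(h) and (dd). Assumptions (b), (c), (e), (g), (h) and (dd) are imposed directly in the statement; assumption (a) is the Gaussian noise assumption, which is a standing hypothesis throughout. Assumption (d) on the scalings $(n,p,s)$ is part of the hypotheses of Lemma~\ref{lemma: model selection stability} (or follows from (dd), which is strictly stronger). So the only thing that needs genuine verification is assumption (f): that both $P(\hat S \neq S) = o(e^{-n^{c_2}})$ and $P^*(\hat S^* \neq \hat S) = o_p(e^{-n^{c_2}})$, where now $\hat S$ is the set selected by stability selection applied to $(X,Y)$ and $\hat S^*$ the set selected by stability selection applied to the resample $(X,Y^*)$.

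First I would invoke Lemma~\ref{lemma: model selection stability} from Appendix~\ref{B}, which under the sparse eigenvalue condition (in place of the Irrepresentable condition) gives exactly $P(\hat S \neq S) = o(e^{-n^{c_2}})$ for stability selection on the original data; this is the analogue of Lemma~\ref{lemma: model selection} for the Lasso and handles the first half of assumption (f). For the second half, I would establish a ``bootstrap stability selection consistency'' statement parallel to Lemma~\ref{lemma:bootstrap selection consistency}: namely $P^*(\hat S^* \neq \hat S) = o_p(e^{-n^{c_2}})$. The argument mirrors the one for the Lasso case. On the event $\{\hat S = S\}$ (which has probability $1 - o(e^{-n^{c_2}})$ by the previous step), the resampled response is $Y^* = X\tilde\beta + \epsilon^* = X_S \tilde\beta_S + \epsilon^*$, i.e.\ it follows a sparse linear model with true support $S$, coefficient vector $\tilde\beta_S$, and i.i.d.\ (bootstrap) errors. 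By Corollary~\ref{corollary: asymptotic bias and variance for stability selection}, $\tilde\beta_S$ is close to $\beta^*_S$ with overwhelming probability, so in particular the nonzero coefficients of $\tilde\beta_S$ inherit the ``beta-min'' lower bound of assumption~(i) (up to constants, with overwhelming probability). The bootstrap errors are centered and, being drawn from the empirical distribution of the residuals, have a variance concentrating around $\sigma^2$ and sub-Gaussian-type tails with overwhelming probability. Hence the hypotheses of Lemma~\ref{lemma: model selection stability}, now applied conditionally to the model $(X,Y^*)$, hold on an event of probability $1-o_p(1)$, and its conclusion yields $P^*(\hat S^* \neq \hat S \mid \hat S = S) = o_p(e^{-n^{c_2}})$; combining with $P(\hat S \neq S) = o(e^{-n^{c_2}})$ gives assumption (f) in full.

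With assumption (f) verified, all hypotheses of Theorem~\ref{theorem:bootstraplassovalid} are in force, and the conclusion $d(G_n, G_n^*) \rightarrow_p 0$ follows immediately. The main obstacle is the bootstrap selection-consistency step: one must check that the ``beta-min'' gap, the standardization of columns, and the noise-level control survive the passage from the true model to the resampled model $(X,Y^*)$ with probability tending to one, so that Lemma~\ref{lemma: model selection stability} can legitimately be re-applied conditionally. This is where assumptions (g)--(h), (dd) and the MSE bound from Corollary~\ref{corollary: asymptotic bias and variance for stability selection} are used, exactly as in the proof of Lemma~\ref{lemma:bootstrap selection consistency}; since stability selection achieves the same exponential rate as the Lasso under the weaker sparse eigenvalue condition, the argument goes through essentially verbatim, which is why the paper states the corollary ``without any proofs.''
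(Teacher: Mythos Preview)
Your proposal is correct and matches the paper's approach: the paper explicitly states that Corollaries~\ref{corollary: asymptotic bias and variance for stability selection}--\ref{corollary:bootstraplassovalid for stability selection} are obtained ``without any proofs'' as direct applications of the general Theorems~\ref{theorem: asymptotic bias and variance}--\ref{theorem:bootstraplassovalid}, replacing Lemma~\ref{lemma: model selection} by Lemma~\ref{lemma: model selection stability}. Your write-up is in fact more detailed than the paper's, and your identification of assumption~(f) as the only nontrivial ingredient---handled by re-running the argument of Lemma~\ref{lemma:bootstrap selection consistency} with stability selection in place of the Lasso---is exactly the intended route.
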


\section{Simulation}

In this section we carry out simulation studies to evaluate the finite sample performance of Lasso+mLS and Lasso+Ridge. We have also constructed simulations for SS+mLS and SS+Ridge (modified Least Squares or Ridge after stability selection). Though in theory stability selection achieves the same convergence rate of the probability of selecting wrong models under weaker conditions compared with the Lasso, their finite sample performance is similar to the Lasso unless the signal to noise ratio is very high. In our simulation, Lasso+mLS and Lasso+Ridge work well and perform similarly with SS+mLS and SS+Ridge regardless of whether the Irrepresentable condition holds or not. Therefore we only present here the results for Lasso+mLS and Lasso+Ridge.

In the following simulations, we also compared the performance of the Lasso+mLS (with $\tau_n=1/n$) with that of the Lasso+OLS and found that their finite sample results are almost the same. This is true because the smallest singular value of the matrix $X_{\hat S}$ containing the predictors selected by Lasso is bounded well from $0$ in all examples which makes the hard thresholding step not necessary. We will omit the results of Lasso+OLS for the sake of brevity.

\subsection{Comparison of Bias$^2$, MSE and PMSE}

This subsection compares the bias$^2$ ($||E \tilde \beta - \beta^*||_2^2$), MSE and prediction mean squared error (PMSE) of the Lasso+mLS and Lasso+Ridge with those of the Lasso. We use R package ``glmnet" to compute the Lasso solution. As part of the simulation, we fix $\tau_n=1/n$ and $\mu_n=1/n$, so the only tuning parameter for all the three methods is $\lambda_n$ which can be chosen by a 5-fold cross-validation.

Our simulated data are drawn from the following model
\[ y=x^T \beta^* + \epsilon,\ \epsilon \sim N(0,\sigma^2). \]
We set $p=500$, $s=10$ and $\sigma=1$. The predictor vector x is generated from a multivariate normal distribution $N(0,\Sigma)$. The value of x is generated once and then kept fixed. We consider two different Toeplitz covariance matrices $\Sigma$ which control the correlation among the predictors: (1) $\Sigma=I$ and (2) $\Sigma_{ij}=\rho^{|i-j|}$ where $\rho=0.5$. For the true parameter $\beta^*$, the first $s=10$ elements are nonzero with two different patterns of sign:
\begin{flushleft}
$case\ (1):\ \  \beta^*_{1-10}=\{1.5,1.5,1.5,1.5,1.5,0.75,0.75,0.75,0.75,0.75\},$

$case\ (2):\ \ \beta^*_{1-10}=\{1.5,1.5,-1.5,-1.5,1.5,0.75,-0.75,0.75,-0.75,-0.75\}. $
\end{flushleft}
The remaining $p-s=490$ elements of $\beta^*$ are zero. Table~\ref{tab:examplesettings} summaries eight different example settings.

\begin{table}[ht]
\begin{center}
 \begin{threeparttable}
 \caption{\label{tab:examplesettings} Example settings.}
\begin{tabular}{cccccc}
  \hline
example & n & p & s & $\Sigma_{ij},i\neq j$ & $\beta^*$ \\
  \hline
1 & 200 & 500 & 10 & 0 & case (1) \\   \hline
 2 & 400 & 500 & 10 &  0  & case (1) \\   \hline
 3 & 200 & 500 & 10 & $0.5^{|i-j|}$ & case (1) \\   \hline
4 & 400 & 500 & 10 &  $0.5^{|i-j|}$ & case (1) \\   \hline
5 & 200 & 500 & 10 & 0 & case (2) \\   \hline
 6 & 400 & 500 & 10 &  0  & case (2) \\   \hline
 7 & 200 & 500 & 10 & $0.5^{|i-j|}$ & case (2) \\   \hline
8 & 400 & 500 & 10 &  $0.5^{|i-j|}$ & case (2) \\   \hline
\end{tabular}
 \end{threeparttable}
\end{center}
\end{table}

After $X$ was generated, we examined the Irrepresentable condition and found that it holds in examples $1-6$ and is violated in examples $7-8$. In order to evaluate the prediction performance, we generate an independent testing data set of size $500$ and compute the PMSE. Summary statistics are calculated based on $100$ replications (keeping $X$ fixed) and showed in Table~\ref{tab:example1} and Figure~\ref{fig:biasmsepmse}.

We see that Lasso+mLS and Lasso+Ridge perform almost the same. They not only dramatically decrease the bias$^2$ of the Lasso by more than $90\%$ but also can reduce the variance (in fact, their variances are $20\%-55\%$ smaller than that of the Lasso in all examples except in example 7 where their variances are $45\%$ larger), therefore they improve the MSE and PMSE by $40\%-80\%$ and $5\%-25\%$ respectively. These benefits occur regardless of whether the Irrepresentable condition holds or not, which indicates that Lasso+mLS and Lasso+Ridge dominate the performance of the Lasso in terms of estimation.

\begin{table}[ht]
 \centering
 \begin{threeparttable}
 \caption{\label{tab:example1}Comparison of Lasso, Lasso+mLS and Lasso+Ridge in terms of bias$^2$, MSE and PMSE.}
\begin{tabular}{rllll}
  \hline
Example &  & Lasso & Lasso+mLS & Lasso+Ridge \\
  \hline
1 &bias$^2$ & 0.266 & \bf{0.003} & 0.005 \\
   & MSE & 0.42(0.11) & 0.08(0.05) & \bf{0.07(0.04)} \\
   & PMSE & 1.45(0.15) & \bf{1.08(0.08)} & 1.08(0.08) \\  \hline
  2 &bias$^2$ & 0.081 & \bf{0.001} & 0.001 \\
   & MSE & 0.13(0.04) & 0.03(0.03) & \bf{0.03(0.01)} \\
   & PMSE & 1.14(0.08) & \bf{1.04(0.07)} & 1.04(0.07) \\  \hline
  3 &bias$^2$ & 0.062 & \bf{0.001} & 0.001 \\
   & MSE & 0.18(0.06) & \bf{0.09(0.05)} & 0.09(0.05) \\
   & PMSE & 1.24(0.11) & 1.07(0.08) & \bf{1.07(0.07)} \\  \hline
 4  &bias$^2$ & 0.02 & \bf{0.001} & 0.001 \\
   & MSE & 0.08(0.03) & \bf{0.04(0.02)} & 0.04(0.02) \\
   & PMSE & 1.09(0.07) & \bf{1.04(0.07)} & 1.04(0.07) \\  \hline
  5 &bias$^2$ & 0.212 & \bf{0.001} & 0.002 \\
   & MSE & 0.35(0.09) & 0.06(0.04) & \bf{0.06(0.03)} \\
   & PMSE & 1.36(0.14) & 1.08(0.08) & \bf{1.08(0.07)} \\  \hline
  6 &bias$^2$ & 0.097 & 0.0002 & \bf{0.0001} \\
   & MSE & 0.15(0.04) & 0.03(0.02) & \bf{0.03(0.01)} \\
   & PMSE & 1.16(0.08) & \bf{1.03(0.06)} & 1.03(0.06) \\  \hline
 7  &bias$^2$ & 0.494 & \bf{0.053} & 0.079 \\
   & MSE & 0.72(0.19) & \bf{0.39(0.31)} & 0.44(0.38) \\
   & PMSE & 1.48(0.16) & \bf{1.3(0.18)} & 1.32(0.2) \\  \hline
 8  &bias$^2$ & 0.343 & \bf{0.004} & 0.007 \\
   & MSE & 0.45(0.11) & \bf{0.07(0.04)} & 0.08(0.08) \\
   & PMSE & 1.26(0.09) & \bf{1.05(0.08)} & 1.05(0.08) \\
   \hline
  \end{tabular}
  \small
  $*$ The numbers in parentheses are the corresponding standard deviations.
 \end{threeparttable}
\end{table}

\begin{figure}[htbp]
\centering\includegraphics[width=\textwidth]{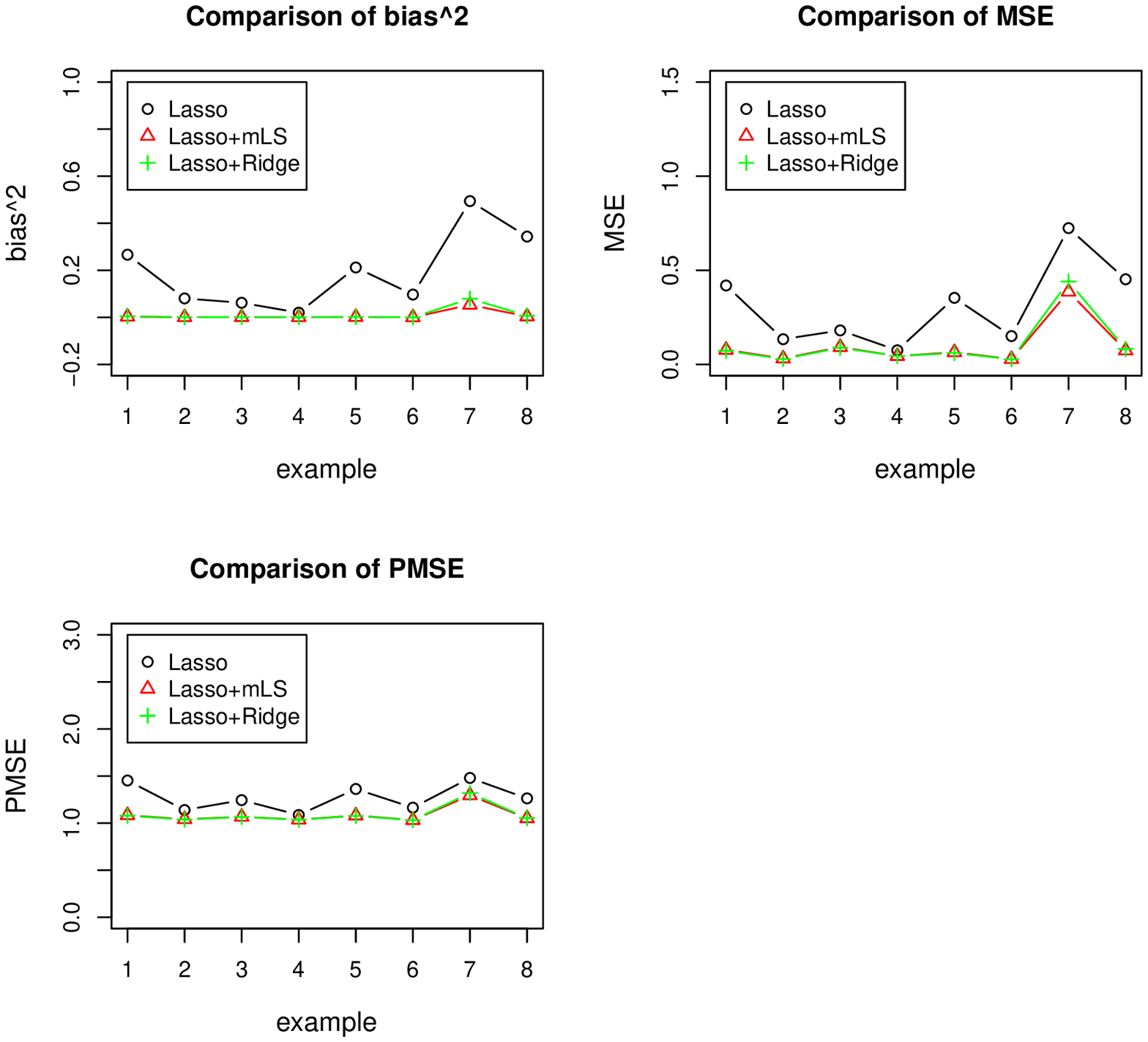}
\caption{Comparisons of bias$^2$, MSE and PMSE. Three methods are considered: Lasso (black circle), Lasso+mLS (red triangle) and Lasso+Ridge (green ``+"). Lasso+mLS and Lasso+Ridge behave similarly, both dominate the performance of the Lasso.}
\label{fig:biasmsepmse}
\end{figure}

\begin{figure}[htbp]
\centering\includegraphics[width=5in]{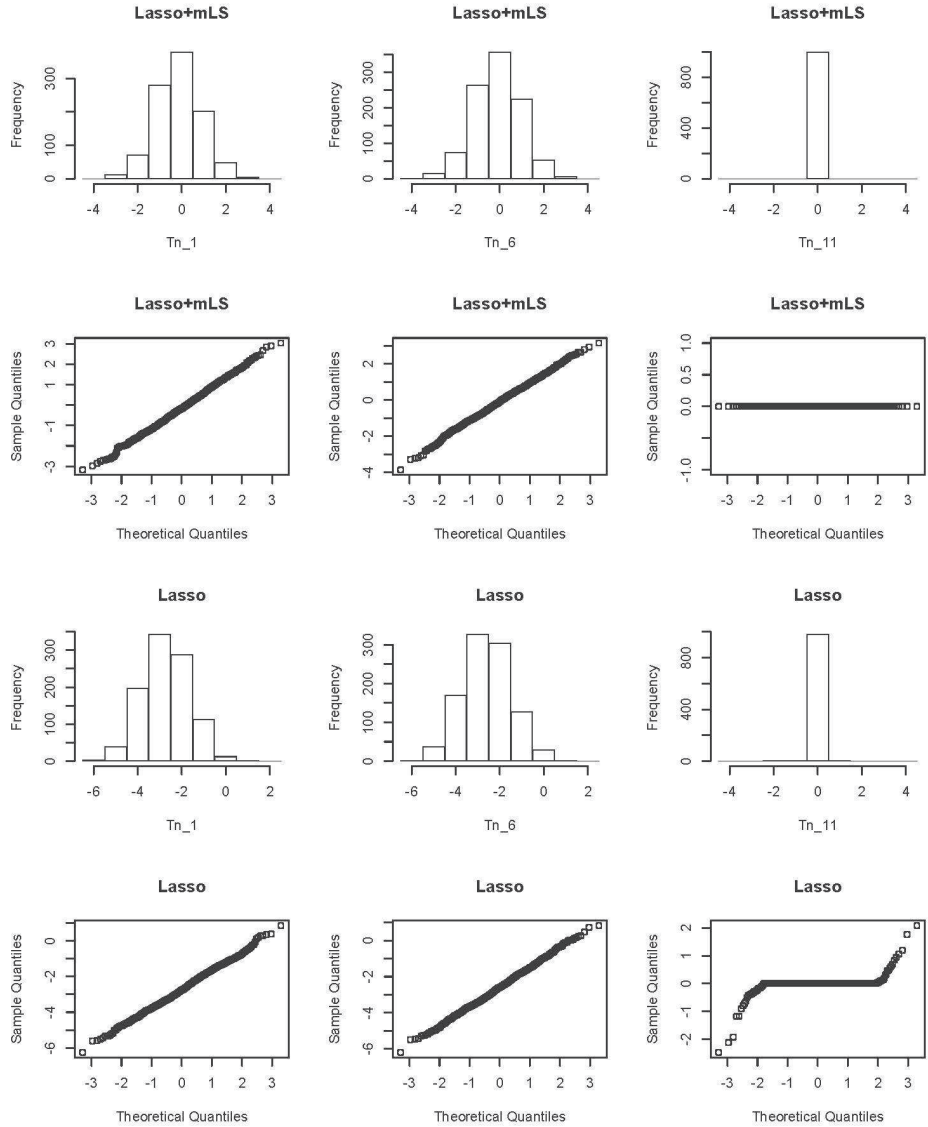}
\caption{Histograms and Normal Q-Q Plots of the scaled and centered Lasso and Lasso+mLS estimators in example 1 for $T_{n,j}=\sqrt{n}(\hat \beta_j -\beta_j^*), j=1,6,11$.}
\label{fig:hist_qqplot}
\end{figure}


\subsection{Finite Sample Distribution}
This section evaluates the finite sample distribution of the scaled and centered Lasso+mLS estimator $T_n=\sqrt{n}(\hat \beta-\beta^*)$. Lasso+Ridge behaves similarly, so we omit it.

We show in Figure~\ref{fig:hist_qqplot} the histograms and Normal Q-Q Plots of the scaled and centered Lasso and Lasso+mLS estimators based on $1000$ replications. We only present the results for individual coefficients $T_{n,j}$ in example $1$ with $j=1,6,11$ corresponding to the largest, the medium sized and the zero-valued coefficients respectively. Other coefficients in example $1$ and the coefficients in examples $2-8$ behave similarly (except example $7$ where the Irrepresentable condition does not hold). We can see that the finite sample distribution of the Lasso+mLS highly coincides with the asymptotically normal distribution which verifies the claims in Theorem $3$ and Corollary $2$. Although the finite sample distributions of the Lasso estimator for the largest and the medium sized coefficients also seem to somewhat resemble normality, the centers shift away from $0$.

We should mention that Lasso+mLS suffers the same issue proposed in \cite{Potscher} which studied the distribution of the adaptive Lasso estimator, that is, the finite sample distribution can be highly non-normal when there is not a gap between the decay rate of the nonzero $\beta^*$ and the order $n^{-1/2}$.

\subsection{Confidence Intervals and Coverage Probabilities}

In this subsection, we study the finite sample performance of residual bootstrap Lasso+mLS and Lasso+Ridge using the examples from Table~\ref{tab:examplesettings}. Since Lasso+mLS and Lasso+Ridge behave similarly, we only show the results of Lasso+mLS for the sake of brevity.

For each data set $(X,Y)$, we generated $500$ bootstrap samples $(X,Y^*)$ by residual bootstrap and then computed the Lasso and Lasso+mLS based on each bootstrap sample $(X,Y^*)$, both are denoted by $\hat \beta^*_{(b)},b=1,...,500$. Let $\hat t_{\alpha/2}$ and $\hat t_{1-\alpha/2}$ be the $\alpha/2$ and $1-\alpha/2$ quantiles of the empirical distribution of $\hat \beta^*$. Two approaches are considered to construct $1-\alpha$ confidence intervals for each individual parameter $\beta^*_j,j=1,...p$: (1) percentile confidence intervals defined by $[\hat t_{\alpha/2},\hat t_{1-\alpha/2}]$; and (2) basic confidence intervals defined by $[2\hat \beta- \hat t_{1-\alpha/2}, 2\hat \beta- \hat
t_{\alpha/2}]$ where $\hat \beta$ is the Lasso estimator for residual bootstrap Lasso or Lasso+mLS for residual bootstrap Lasso+mLS, see \cite{EfronTibshirani1993,DavisonHinkley1997}. This procedure is repeated $100$ times and then an estimate of the coverage probability is obtained.

We found that basic confidence intervals based on residual bootstrap Lasso provide more accurate coverage probabilities while having the same length as percentile confidence intervals (the basic $90\%$ confidence intervals can achieve coverage probabilities larger than $80\%$ while the percentile confidence intervals are too biased that their coverage probabilities can be lower than $20\%$ for the nonzero-valued parameters, see Figure~\ref{fig:basicpercentile}). The distribution of residual bootstrap Lasso is far away from being centered at the true value (Figure~\ref{fig:histogram}) which makes the percentile confidence intervals fail. Similar phenomenon happens for paired bootstrap Lasso method. Therefore, we suggest using the basic confidence intervals in practice with high-dimensional data. In what follows, our confidence intervals are all basic.

\begin{figure}[htbp]
\centering\includegraphics[width=\textwidth]{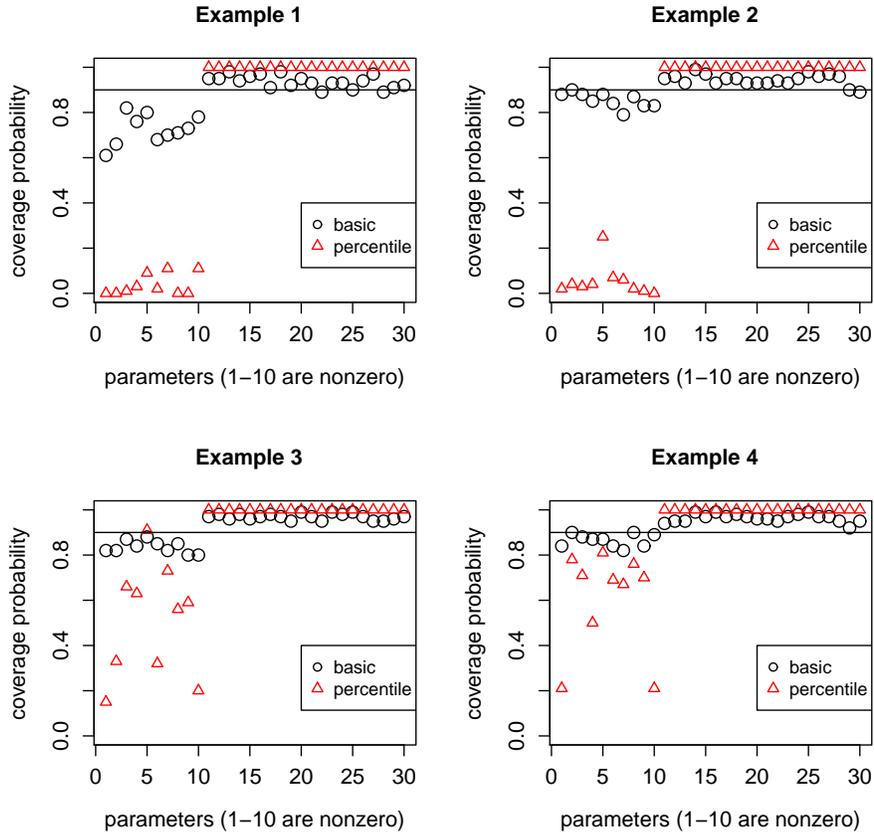}
\caption{Comparison of two confidence intervals construction approaches: basic (black circle) and percentile (red triangle). Coverage probabilities of $90\%$ confidence intervals for each $\beta^*_j,j=1,..,s,s+1,...,s+20$ based on residual bootstrap Lasso are shown in this figure. We only show the results for examples 1-4 since the results for examples 5-8 are similar. For a better view, the coverage probabilities for only $20$ zero-valued parameters are present and those for the remaining $p-s-20$ zero-valued parameters are similar to the $20$ presented and therefore are omitted. Basic confidence intervals provide much more accurate coverage probabilities than percentile confidence intervals.}
\label{fig:basicpercentile}
\end{figure}

\begin{figure}[htbp]
\centering\includegraphics[width=\textwidth]{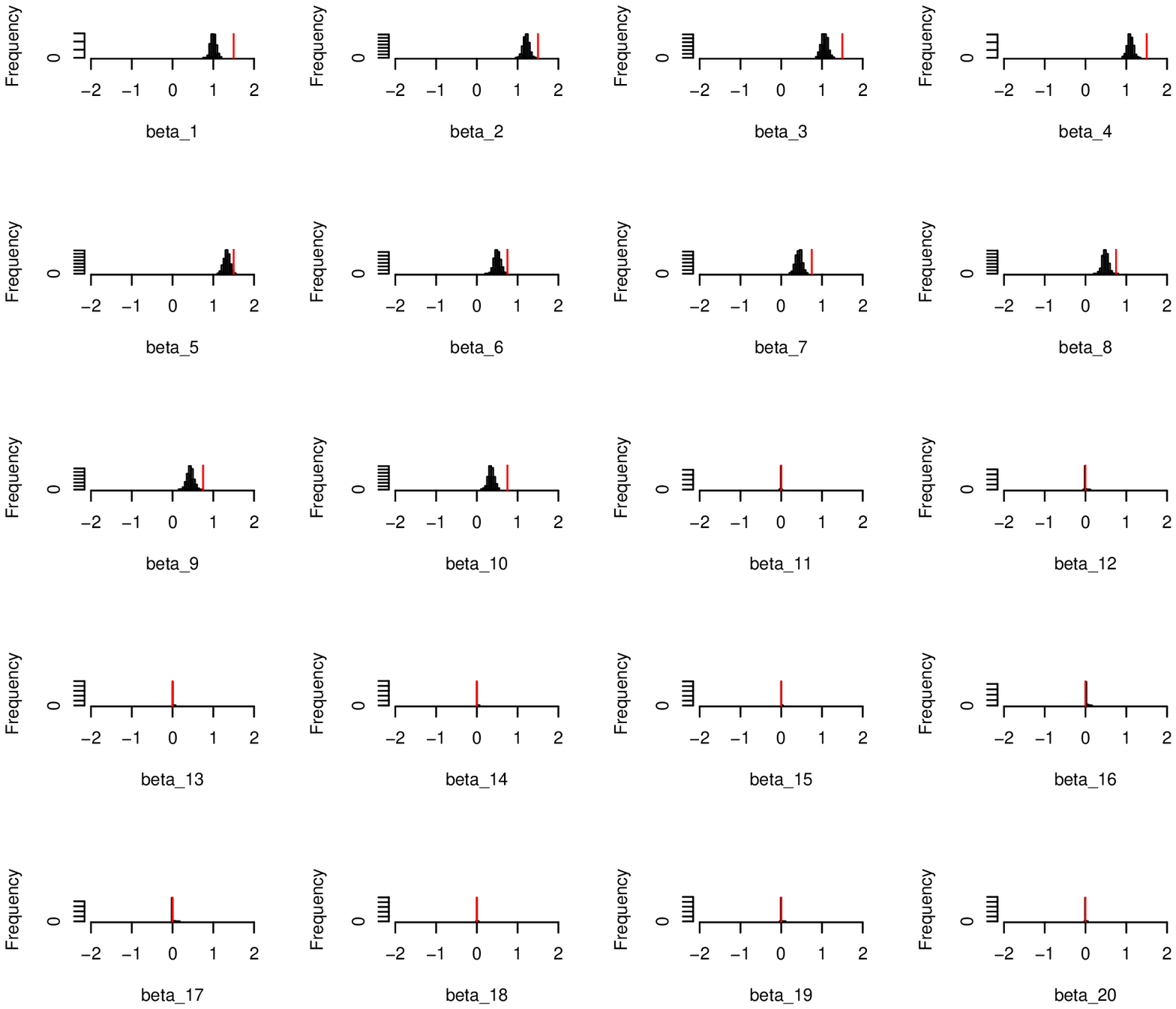}
\caption{Histograms of the distribution of residual bootstrap Lasso with the true value plotted as a red vertical line. This figure only show the results for the first $20$ parameters in example 1. Other parameters and other examples behave similarly. The large bias of the residual bootstrap Lasso makes percentile confidence intervals fail.}
\label{fig:histogram}
\end{figure}

Figure~\ref{fig:coverprobres} shows the coverage probabilities of $90\%$ confidence intervals based on residual bootstrap Lasso+mLS and residual bootstrap Lasso. In this figure, only $20$ zero-valued parameters are presented for the sake of brevity. The coverage probabilities for the remaining $p-s-20$ zero-valued parameters are similar to the $20$ presented and therefore are omitted. In addition, we average the coverage probabilities and interval lengths over nonzero-valued parameters part and zero-valued parameters part respectively (see Tables~\ref{tab:meancp} and \ref{tab:meanil}). From Figure~\ref{fig:coverprobres} and Tables~\ref{tab:meancp} and \ref{tab:meanil}, we can see that residual bootstrap Lasso+mLS gives accurate coverage probabilities (approximately $88\%$ for nonzero $\beta_j^*$ and $1$ for zero $\beta_j^*$) when the Irrepresentable condition holds (see examples 1-6), which verifies Corollary~\ref{corollary:bootstraplassovalid}. Note that, for zero-valued parameters, residual bootstrap Lasso+mLS produces confidence intervals with coverage probabilities close to $1$ and very short lengths (approximately $0$, see Figure~\ref{fig:length} and Table~\ref{tab:meanil}), reflecting the oracle properties in Corollary~\ref{corollary: asymptotic normality}. By contrast, residual bootstrap Lasso cannot provide accurate coverage probabilities unless $n$ is large enough (the coverage probability for nonzero $\beta^*_j$ is around $75\%$ for $n=200$ and is around $85\%$ for $n=400$). Even when the Irrepresentable condition doesn't hold (see examples 7-8), residual bootstrap Lasso+mLS can also provide reasonable coverage probabilities ($83.5\%$, $90.3\%$ for nonzero $\beta_j^*$, and $99.1\%$, $99.7\%$ for zero $\beta_j^*$) while residual bootstrap Lasso does not ($71.7\%$, $82.1\%$ for nonzero $\beta_j^*$ and $93.1\%$, $92.6\%$ for zero $\beta_j^*$). Compared with residual bootstrap Lasso, the coverage probability of residual bootstrap Lasso+mLS is about $7\%$ in average closer to the preassigned level $(90\%)$ for nonzero $\beta_j^*$ and $5\%$ closer to $1$ for zero $\beta_j^*$. Even though residual bootstrap Lasso has shorter ($17\%$ in average) interval lengths for the nonzero $\beta^*_j$, it loses accuracy in coverage. Overall, residual bootstrap Lasso+mLS is better than residual bootstrap Lasso. Moreover, when $n$ increases, the performance of both methods become better.

\begin{table}[ht]
\begin{center}
  \begin{threeparttable}
 \caption{\label{tab:meancp} Mean coverage probability of residual bootstrap Lasso (RBL), residual bootstrap Lasso+mLS (RBLmLS) and paired bootstrap Lasso (PBL). }
\begin{tabular}{rrrrrrrrrr}
  \hline
 Example & & 1 & 2 & 3 & 4 & 5 & 6 & 7 & 8 \\
  \hline
nonzero $\beta^*_j$ & RBL     & 0.725 & 0.855 & 0.835 & 0.865 & 0.792 & 0.875 & 0.717 & 0.821 \\
                                 & RBLmLS  & 0.880 & 0.882 & 0.880 & 0.884 & 0.901 & 0.917 & 0.835 & 0.903 \\
                                 & PBL      & 0.772 & 0.634 & 0.833 & 0.816 & 0.801 & 0.609 & 0.866 & 0.512 \\ \hline
  zero $\beta^*_j$     & RBL     & 0.937 & 0.947 & 0.965 & 0.968 & 0.940 & 0.947 & 0.931 & 0.926 \\
                                 & RBLmLS  & 0.999 & 1.000 & 1.000 & 1.000 & 1.000 & 1.000 & 0.991 & 0.997 \\
                                & PBL       & 0.990 & 0.996 & 0.997 & 0.999 & 0.991 & 0.997 & 0.987 & 0.992 \\
   \hline
\end{tabular}
 \end{threeparttable}
\end{center}
\end{table}

\begin{table}[ht]
\begin{center}
 \begin{threeparttable}
 \caption{\label{tab:meanil} Mean interval length of residual bootstrap Lasso (RBL), residual bootstrap Lasso+mLS (RBLmLS) and paired bootstrap Lasso (PBL). }
\begin{tabular}{rrrrrrrrrr}
  \hline
 Example & & 1 & 2 & 3 & 4 & 5 & 6 & 7 & 8 \\
  \hline
nonzero $\beta^*_j$ & RBL     & 0.209 & 0.158 & 0.278 & 0.202 & 0.222 & 0.165 & 0.266 & 0.203 \\
                                 & RBLmLS  & 0.248 & 0.178 & 0.322 & 0.241 & 0.254 & 0.204 & 0.350 & 0.290 \\
                                 & PBL     & 0.288 & 0.170 & 0.307 & 0.207 & 0.293 & 0.176 & 0.412 & 0.238 \\  \hline
  zero $\beta^*_j$     & RBL     & 0.011 & 0.004 & 0.002 & 0.001 & 0.009 & 0.004 & 0.017 & 0.014 \\
                                 & RBLmLS  & 0.001 & 0.000 & 0.000 & 0.000 & 0.000 & 0.000 & 0.009 & 0.002 \\
                                 & PBL      & 0.024 & 0.016 & 0.012 & 0.010 & 0.022 & 0.016 & 0.031 & 0.025 \\
   \hline
\end{tabular}
 \end{threeparttable}
\end{center}
\end{table}

\begin{figure}[htbp]
\centering\includegraphics[width=5in]{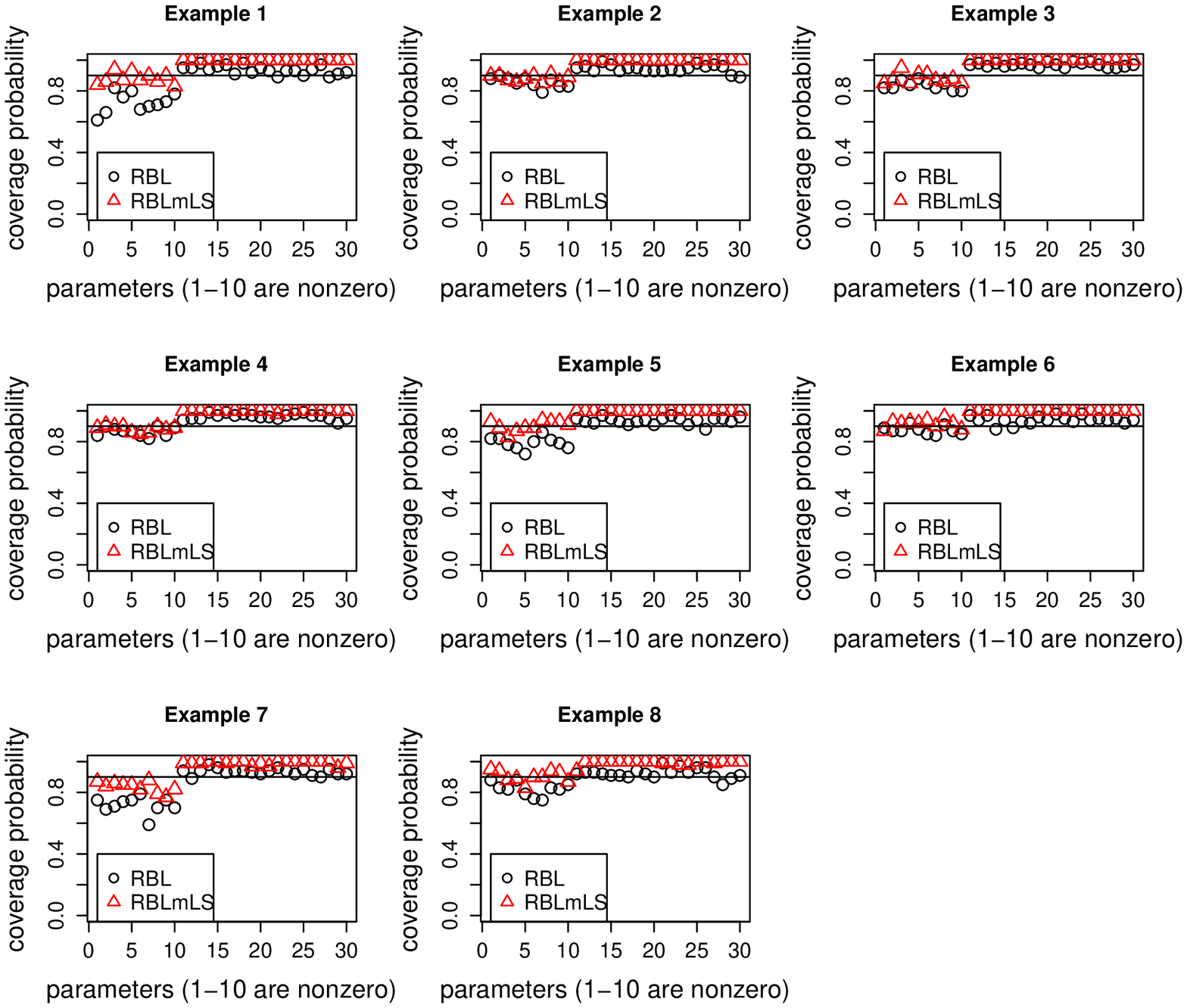}
\caption{Coverage probabilities of $90\%$ confidence intervals for each $\beta^*_j,j=1,..,s,s+1,...,s+20$ based on two methods: residual bootstrap Lasso+mLS (RBLmLS, red triangle) and residual bootstrap Lasso (RBL, black circle). For a better view, only $20$ zero-valued parameters are present. The results for the remaining $p-s-20$ zero-valued parameters are similar to the $20$ presented and therefore are omitted. Residual bootstrap Lasso+mLS provides more accurate coverage probabilities ($7\%$ in average closer to the preassigned level $(90\%)$ for nonzero $\beta_j^*$ and $5\%$ closer to $1$ for zero $\beta_j^*$).}
\label{fig:coverprobres}
\end{figure}

\begin{figure}[htbp]
\centering\includegraphics[width=5in]{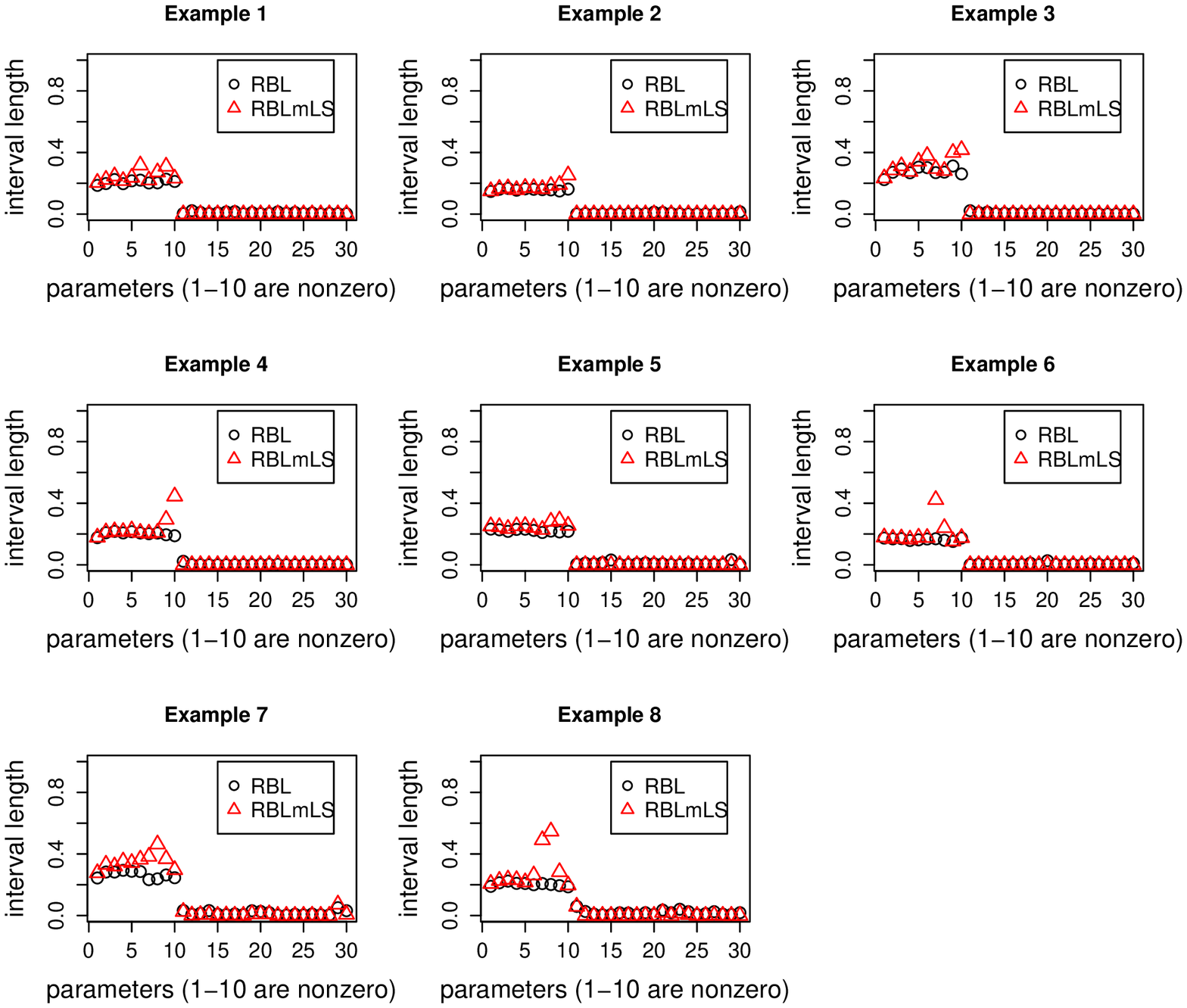}
\caption{Lengths of $90\%$ confidence intervals for each $\beta^*_j,j=1,..,s,s+1,...,s+20$ based on two methods: residual bootstrap Lasso+mLS (RBLmLS, red triangle) and residual bootstrap Lasso (RBL, black circle). For a better view, only $20$ zero-valued parameters are present. The interval lengths for the remaining $p-s-20$ zero-valued parameters are similar to the $20$ presented and therefore are omitted.}
\label{fig:length}
\end{figure}

In practice, many people prefer to perform paired bootstrap Lasso (resampling from the pairs $(x_i,y_i),i=1,...,n$ instead of from the residual) even when it makes sense to think of the design matrix as fixed. Therefore, we give some comparisons of residual bootstrap Lasso+mLS and paired bootstrap Lasso. Figure~\ref{fig:problength} shows the coverage probabilities v.s. average interval lengths based on residual bootstrap Lasso+mLS and paired bootstrap Lasso for different examples. Again, we average the coverage probabilities and interval lengths over nonzero-valued parameters part and zero-valued parameters part respectively and show them in Table~\ref{tab:meancp} and Table~\ref{tab:meanil}. We can see that residual bootstrap Lasso+mLS provides more accurate coverage probabilities ($0.5\%$ closer to $1$ for zero $\beta_j^*$ and $14\%$ in average closer to the preassigned level for nonzero $\beta_j^*$) with more than $90\%$ shorter (for zero $\beta_j^*$) or at least comparable (for nonzero $\beta_j^*$) interval lengths compared with paired bootstrap Lasso. Based on our simulations, we conclude that residual bootstrap Lasso+mLS and residual bootstrap Lasso+Ridge are better choices for constructing confidence intervals.

\begin{figure}[htbp]
\centering\includegraphics[width=5in]{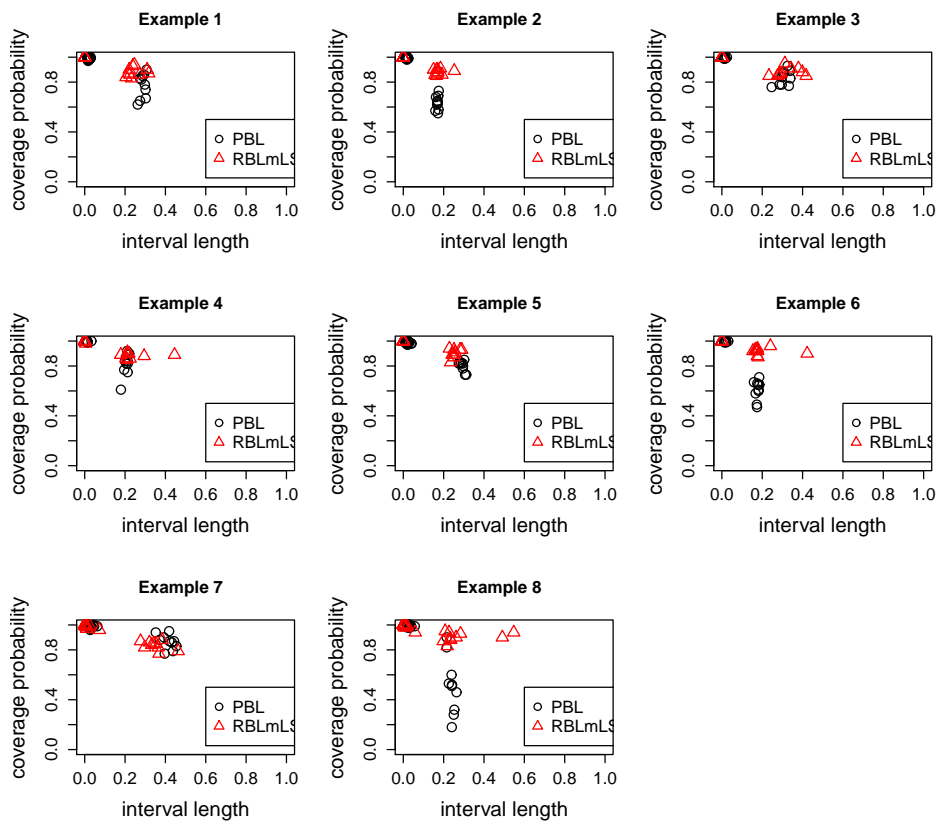}
\caption{Coverage probabilities v.s. average interval lengths for $90\%$ confidence intervals based on two methods: paired bootstrap Lasso (PBL, black circle) and residual bootstrap Lasso+mLS (RBLmLS, red triangle). The latter is better since it provides more accurate coverage probabilities ($0.5\%$ closer to $1$ for zero $\beta_j^*$ and $14\%$ in average closer to the preassigned level ($90\%$) for nonzero $\beta_j^*$) but with $90\%$ shorter (for zero $\beta_j^*$) or at least comparable (for nonzero $\beta_j^*$) interval lengths.}
\label{fig:problength}
\end{figure}

\section{Conclusion}

We have derived for the first time the asymptotic properties of Lasso+mLS and Lasso+Ridge in sparse high-dimensional linear regression models where $p \gg n$. Under the Irrepresentable condition and other common conditions on scaling $(n,s,p)$, we showed that both Lasso+mLS and Lasso+Ridge are asymptotically unbiased and they achieve oracle convergence rate of $\frac{s}{n}$ for MSE which improves the performance of the Lasso. In addition, Lasso+mLS and Lasso+Ridge estimators have an oracle property in the sense that they can select the true predictors with probability converging to 1 and the estimates of nonzero parameters have the same asymptotic normal distribution that they would have if the zero parameters were known in advance.

We then proposed residual bootstrap after Lasso+mLS and Lasso+Ridge methods and showed that they give valid approximations to the distributions of Lasso+mLS and Lasso+Ridge, respectively, provided that the probability of the Lasso selecting wrong models decays at an exponential rate and the number of true predictors $s$ goes to infinity slower than $\sqrt{n}$. In fact, our analysis is not limited to adopting Lasso in the selection stage, but is applicable to any other model selection criteria with exponentially decay rates of the probability of selecting wrong models, for example, stability selection, SCAD and Dantzig selector.

Lastly, we presented simulation results assessing the finite sample performance of Lasso+mLS and Lasso+Ridge and observe that they not only dramatically decrease the bias$^2$ of the Lasso by more than $90\%$ but also reduce the MSE and PMSE by $40\%-80\%$ and $5\%-25\%$ respectively. Further, we constructed $90\%$ confidence interval based on our residual bootstrap Lasso+mLS (or Lasso+Ridge) and examined the coverage accuracy. We found that our method resulted in coverage probability approximately $88\%$ for nonzero $\beta^*_j$ and $1$ for zero $\beta^*_j$, which is much more accurate (approximately $7\%$ closer to the desired level) than bootstrap Lasso method.

\section*{Acknowledgements}
Hanzhong Liu would like to thank the China Scholarship Council (CSC) for financial support and the Statistics Department at UC Berkeley for hosting his visit during which this work was finished. This research is also supported in part by NSF grants SES-0835531 (CDI), DMS-1107000, DMS-1228246, ARO grant W911NF-11-1-0114, and the Center for Science of Information (CSoI), an US NSF Science and Technology Center, under grant agreement CCF-0939370. The authors are very grateful to Taesup Moon, Siqi Wu, Jyothsna Sainath, Adam Bloniarz and the referees for many insightful comments and helpful suggestions that lead to a substantially improved manuscript.

\appendix

\section{Model Selection Consistency of the Lasso}\label{A}

\begin{lemma}[Zhao and Yu (2006)]
\label{lemma: model selection}
Under conditions (a)-(d), (i)-(j) and the Irrepresentable Condition $\eqref{eqn:IC}$, the Lasso has strong sign consistency. That is,
\[ P(sign(\hat \beta(\lambda_n)) = sign( \beta^*)) \geq 1- o(e^{-n^{c_2}})\rightarrow 1 \ as\ n\rightarrow \infty. \]
\end{lemma}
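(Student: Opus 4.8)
The plan is to reproduce the primal--dual witness (equivalently, KKT-condition) argument underlying \cite{ZhaoYu2006}. Since the objective in $\eqref{eqn:lasso}$ is convex, $\hat\beta$ is a minimizer iff there is a subgradient $z\in\partial\|\hat\beta\|_1$ with $\frac2n X^T(X\hat\beta - Y) + \frac{\lambda_n}{n}z = 0$, where $z_j = sign(\hat\beta_j)$ when $\hat\beta_j\neq 0$ and $|z_j|\le 1$ otherwise. First I would build a candidate solution $\hat\beta$ with $\hat\beta_{S^c}=0$ and $\hat\beta_S$ solving $\frac2n X_S^T(X_S\hat\beta_S - Y) + \frac{\lambda_n}{n}sign(\beta^*_S)=0$; since $Y = X_S\beta^*_S + \epsilon$ and $C_{11}=\frac1n X_S^TX_S$ is invertible by assumption (c), this gives the explicit identity
\begin{equation}
\hat\beta_S - \beta^*_S = C_{11}^{-1}\Bigl(\tfrac1n X_S^T\epsilon - \tfrac{\lambda_n}{2n}sign(\beta^*_S)\Bigr)=:u .
\label{eqn:pdwu}
\end{equation}
Substituting $\eqref{eqn:pdwu}$ into the stationarity equation restricted to $S^c$ and using $\frac1n X_{S^c}^TX_S = C_{21}$, the implied subgradient on $S^c$ is
\begin{equation}
z_{S^c} = C_{21}C_{11}^{-1}sign(\beta^*_S) + \tfrac{2}{\lambda_n}X_{S^c}^T(I - P_S)\epsilon, \qquad P_S := X_S(X_S^TX_S)^{-1}X_S^T .
\label{eqn:pdwz}
\end{equation}
If (A) $\|u\|_\infty < \min_{1\le i\le s}|\beta^*_i|$ and (B) $\|z_{S^c}\|_\infty < 1$, then $sign(\hat\beta_S)=sign(\beta^*_S)$ so the candidate is consistent with its own construction, and strict dual feasibility forces every Lasso solution to be supported on $S$, so together with invertibility of $X_S^TX_S$ the vector $\hat\beta$ is the unique minimizer and is sign-consistent; hence $P(sign(\hat\beta(\lambda_n))\neq sign(\beta^*)) \le P(A^c)+P(B^c)$.

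It remains to bound the two probabilities via Gaussian tail bounds (assumption (a)). For $A^c$: the deterministic part of $u$ in $\eqref{eqn:pdwu}$ has $\ell_\infty$-norm at most $\frac{\lambda_n}{2n}\|C_{11}^{-1}sign(\beta^*_S)\|_\infty \le \frac{\lambda_n\sqrt s}{2n\Lambda_{min}} = O(n^{(c_1+c_4-1)/2})$ by assumptions (d), (j), which is $o(n^{-(1-c_3)/2})$ since $c_1+c_4<c_3$ (assumption (j)), hence eventually $\le \tfrac12\min_i|\beta^*_i|$ by assumption (i); the random part $C_{11}^{-1}\frac1n X_S^T\epsilon$ is Gaussian with covariance $\frac{\sigma^2}{n}C_{11}^{-1}$, so coordinatewise variance at most $\frac{\sigma^2}{n\Lambda_{min}}$, and a union bound over $s$ coordinates gives $P(A^c)\le 2s\exp\!\bigl(-\tfrac{M^2\Lambda_{min}}{8\sigma^2}n^{c_3}\bigr)=o(e^{-n^{c_2}})$ because $c_3>c_2$ and $s$ is polynomial in $n$. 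For $B^c$: by the Irrepresentable Condition $\eqref{eqn:IC}$ the first term in $\eqref{eqn:pdwz}$ is bounded elementwise by $1-\min_j\eta_j$, so it suffices that $\|\tfrac2{\lambda_n}X_{S^c}^T(I-P_S)\epsilon\|_\infty < \min_j\eta_j$; each coordinate is Gaussian with variance at most $\frac{4\sigma^2}{\lambda_n^2}\|X_j\|_2^2 = \frac{4n\sigma^2}{\lambda_n^2}$ by the standardization in assumption (b), and a union bound over the $p-s$ coordinates gives $P(B^c)\le 2(p-s)\exp(-c\,\lambda_n^2/n) = 2(p-s)\exp(-c'n^{c_4}) = o(e^{-n^{c_2}})$ since $p=O(e^{n^{c_2}})$ and $c_4>c_2$ (assumption (j)). Adding the two bounds yields the claim.

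The main obstacle is the exponent bookkeeping that must hold simultaneously: the bias term $\lambda_n\sqrt s/n$ in $\eqref{eqn:pdwu}$ has to be negligible against the signal floor $\min_i|\beta^*_i|\asymp n^{-(1-c_3)/2}$, which needs $\lambda_n$ (hence $c_4$) not too large, precisely $c_1+c_4<c_3$; yet $\lambda_n$ must be large enough that $\lambda_n^2/n\gg n^{c_2}$ to survive the union bound over all $p$ coordinates in $\eqref{eqn:pdwz}$, which needs $c_4>c_2$. The admissible interval $(c_2,\,c_3-c_1)$ for $c_4$ in assumption (j), together with $c_3>c_1+c_2$ in assumption (i), is exactly what makes both requirements compatible. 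A secondary, routine point is the convex-duality fact that strict dual feasibility (B) pins the support of every Lasso solution to $S$, which I would only sketch.
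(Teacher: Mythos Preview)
Your proposal is correct and follows the primal--dual witness (KKT) argument of \cite{ZhaoYu2006}. Note, however, that the paper itself does not supply a proof of this lemma: it is stated in Appendix~\ref{A} as a quotation of the result from \cite{ZhaoYu2006}, accompanied only by Remark~A.1 observing that the Gaussian assumption~(a) may be relaxed to subgaussian tails. Your write-up therefore goes beyond the paper by reconstructing the underlying argument, and the exponent bookkeeping you carry out---showing that the window $c_2<c_4<c_3-c_1$ in assumption~(j) is precisely what makes the bias control on $u$ and the union bound on $z_{S^c}$ simultaneously compatible---is exactly the content of the cited reference.
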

\ \\
{\bf Remark A.1.} In fact, looking carefully through the proof of Lemma~\ref{lemma: model selection} in \cite{ZhaoYu2006}, the Gaussian assumption (a) can be relaxed by a subgaussian assumption. That is, assume that there exists constants $C$, $c>0$ so
\[ P(|\epsilon_i|\geq t)\leq C e^{-ct^2}, \ \forall t\geq 0. \]

This result tells us that using the Lasso we can allow $p$ to grow faster than $n$ (up to exponentially fast) while the probability of correct model selection still converges to 1 fast.

\section{Stability Selection}\label{B}

Here we will give a brief introduction of stability selection combined with randomized Lasso in sparse linear regression.

The randomized Lasso is a generalization of the Lasso, which penalizes the absolute value $|\beta_k|$ of every component with a penalty randomly chosen in the range $[\lambda, \lambda/\alpha]$ for $\alpha \in (0,1]$. Let $W_k$ be i.i.d. random variables in $[\alpha,1]$ for $k=1,...,p$. The randomized Lasso estimator $\hat \beta^{\lambda,W}$ is defined by
\begin{equation}
 \hat \beta^{\lambda,W} = \mathop {argmin}\limits_{\beta} \left\{  || {Y - X\beta } ||^2  + \lambda \sum_{k=1}^{p} \frac{|\beta_k|}{W_k} \right\}
\end{equation}
where $\lambda \in R^+$ is the regularization parameter. \cite{MeinshausenBuhlmann2010} proposed an appropriate distribution of the weights $W_k$: $W_k=\alpha$ with probability $p_w \in (0,1)$ and $W_k=1$ otherwise.

For any given regularization parameter $\lambda \in \Lambda \subseteq R^+$, denote the selected predictor set based on the samples $I\subset\{1,...,n\}$ as $\hat S^{\lambda,W}(I) = \{k: \hat \beta^{\lambda,W}_k \neq 0 \}$.

\begin{definition}[Meinshausen and Buhlmann (2010)]
Let $I$ be a random subsample of $\{1,...,n\}$ of size $\lfloor n/2 \rfloor$, drawn with replacement. For every set $K \subseteq \{1,...,p\}$, the probability of being in the selected set $\hat S^{\lambda,W}(I)$ is
\begin{equation}
 \hat \Pi^{\lambda}_K = P^*\{ K \subseteq \hat S^{\lambda,W}(I) \}
\end{equation}
where the probability $P^*$ is with respect to both the random subsampling and the randomness of the weights $W_k$.
\end{definition}
With stability selection, we subsample the data many times and then choose the predictors with a high selection probability.
\begin{definition}[Meinshausen and Buhlmann (2010)]
For a cut-off $\pi_{thr}$ with $0< \pi_{thr} <1$ and a set of regularization parameters $\Lambda$, the set of stable variables is defined as
\begin{equation}
 \hat S^{stable} = \{k: \mathop {max}\limits_{\lambda \in \Lambda}\hat \Pi^{\lambda}_k \geq \pi_{thr} \}.
\end{equation}
\end{definition}
For stability selection with randomized Lasso, one can obtain selection consistency by just assuming sparse eigenvalues, a condition that is much weaker than that of the Irrepresentability. Sparse eigenvalues condition essentially requires that the minimum and maximum eigenvalues, for a selection of order $s$ predictors, are bounded away from 0 and $\infty$ respectively.

\begin{definition}[Meinshausen and Buhlmann (2010)]
For any $K\subseteq \{1,...,p\}$, let $X_K$ be the restriction of $X$ to columns in $K$. The minimal sparse eigenvalue $\phi_{min}$ is defined for $k\leq p$ as
\begin{equation}
 \phi_{min}(k) = \mathop {inf}\limits_{a \in R^{\lceil k \rceil},K\subseteq \{1,...,p\}:|K|\leq \lceil k \rceil} \left\{  \frac{||X_Ka||}{||a||} \right\}
\end{equation}
and analogously for the maximal sparse eigenvalue $\phi_{max}$.
\end{definition}
\ \\
{\em Sparse eigenvalues assumption: there are some $C>1$ and some $\kappa \geq 9$ such that
\[ \frac{\phi_{max}(Cs^2)}{\phi_{min}^{3/2}(Cs^2)} < \sqrt{C}/\kappa. \] }

Under this assumption, we can state the selection consistency result obtained directly from Theorem 2 in \cite{MeinshausenBuhlmann2010}.
\begin{lemma}[Meinshausen and Buhlmann (2010)]
\label{lemma: model selection stability}
For the randomized Lasso, let $\alpha$ be given by $\alpha^2= \nu \phi_{min}(m)/m$, for any $\nu \in ((7/\kappa)^2,1/\sqrt{2})$, and $m=Cs^2$. Let $c_2<c<1$ and $\lambda_{min} = 2 \sigma \{\sqrt{2C}s+1\}n^{(c-1)/2}$. Assume that $p=O(e^{n^{c_2}})>10$ and $s\geq 7$ and that the gaussian assumption (a) and the sparse eigenvalues assumption are satisfied. For any $\lambda \geq \lambda_{min}$, if $\mathop {min}\limits_{1\leq i \leq s}|\beta_i^*| \geq (Cs)^{3/2}(0.3\lambda)$, then there is some $\delta \in (0,1)$ such that, for all $\pi_{thr}\geq 1-\delta$, stability selection with randomized Lasso satisfies
$$P(\hat S^{stable}_{\lambda} = S) \geq 1- o(e^{-n^{c_2}})$$
where $\hat S^{stable}_{\lambda}=\{ k: \Pi^{\lambda}_k \geq \pi_{thr} \}$.
\end{lemma}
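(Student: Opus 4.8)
The plan is to obtain this lemma as a direct specialization of Theorem~2 of \cite{MeinshausenBuhlmann2010}, so that the only real work is (i) checking that our choices of the weight parameter $\alpha$, the regularization $\lambda$, and the cut-off $\pi_{thr}$ meet the hypotheses of that theorem, and (ii) reading off the rate at which $P(\hat S^{stable}_{\lambda}\neq S)$ vanishes. For (i) I would first note that the parameter ranges are consistent: since $\kappa\ge 9$ we have $(7/\kappa)^2\le(7/9)^2<1/\sqrt{2}$, so the interval $((7/\kappa)^2,1/\sqrt{2})$ from which $\nu$ is drawn is nonempty; the sparse eigenvalues assumption $\phi_{max}(Cs^2)/\phi_{min}^{3/2}(Cs^2)<\sqrt{C}/\kappa$ is exactly the design condition required by their Theorem~2 with $m=Cs^2$; and the choice $\alpha^2=\nu\phi_{min}(m)/m$, the signal condition $\min_{1\le i\le s}|\beta_i^*|\ge(Cs)^{3/2}(0.3\lambda)$, together with the side conditions $p>10$, $s\ge 7$ and $p=O(e^{n^{c_2}})$, are the exact specializations used there. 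Granting this, their Theorem~2 produces a $\delta\in(0,1)$ such that for every $\pi_{thr}\ge 1-\delta$ the stable set $\hat S^{stable}_{\lambda}$ coincides with $S$ with probability tending to one.

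The substantive step is (ii): upgrading ``probability tending to one'' to ``probability at least $1-o(e^{-n^{c_2}})$''. For this I would revisit the proof of their Theorem~2 and keep the error term explicit. That argument bounds $P(\hat S^{stable}_{\lambda}\neq S)$ by combining a false-negative estimate (every $k\in S$ has $\max_{\lambda}\hat\Pi^{\lambda}_k$ close to one once $\min_{i}|\beta_i^*|$ exceeds $(Cs)^{3/2}(0.3\lambda)$) with a false-positive estimate controlling $\max_{k\notin S}\hat\Pi^{\lambda}_k$. Both reduce, via a union bound over the at most $p$ coordinates and an average over the subsampling and the bounded weights $W_k$, to Gaussian tail probabilities for linear functionals of $\epsilon$ at scale $\lambda$; since the columns satisfy $\tfrac1n\|X_j\|_2^2=1$, such a tail is of order $\exp(-c_0 n\lambda^2)$ for a constant $c_0>0$ depending only on $\sigma$ and the sparse eigenvalue bounds, up to a factor polynomial in $p$ and $s$. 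Taking $\lambda=\lambda_{min}=2\sigma(\sqrt{2C}s+1)n^{(c-1)/2}$ (the bound only improves for larger $\lambda$) gives $n\lambda_{min}^2\asymp s^2 n^{c}\ge n^{c}$, so $P(\hat S^{stable}_{\lambda}\neq S)\le P_0\, e^{-c_0 n^{c}}$ with $P_0$ polynomial in $p$ and $s$.

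Finally, since $c_2<c\le 1$, $s=O(n^{c_1})$ and $p=O(e^{n^{c_2}})$, the exponential factor dominates: $\log P_0=O(n^{c_2})+O(\log n)=o(n^{c})$, so for all large $n$ we have $P_0\, e^{-c_0 n^{c}}\le e^{-n^{c_2}}$, i.e.\ $P(\hat S^{stable}_{\lambda}\neq S)=o(e^{-n^{c_2}})$, which is the asserted bound. I expect the main obstacle to be precisely this rate extraction: Theorem~2 of \cite{MeinshausenBuhlmann2010} is phrased asymptotically, so one must re-derive the non-asymptotic error bound inside its proof and confirm that every $n$-dependent factor other than $e^{-c_0 n^{c}}$ --- namely the $p$-fold union bound and the polynomial-in-$s$ prefactors --- is absorbed by $e^{-c_0 n^{c}}$ because $c_2<c$. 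A secondary check is that the threshold level $\delta$ (equivalently, the admissible range of $\pi_{thr}$) coming out of their proof can be taken independent of $n$, so that a single $\pi_{thr}$ works uniformly along the sequence.
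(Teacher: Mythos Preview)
Your proposal is correct and matches the paper's approach: the paper does not prove this lemma at all but simply states it as ``obtained directly from Theorem~2 in \cite{MeinshausenBuhlmann2010}'', so your plan to specialize that theorem and verify its hypotheses is exactly what is intended. Your additional discussion of how to extract the explicit $o(e^{-n^{c_2}})$ rate from the Gaussian tail bounds inside their proof goes beyond what the paper provides (the paper gives no such details), but it is the natural way to justify the stated rate and is consistent with how the analogous Lasso result (Lemma~\ref{lemma: model selection}) is handled.
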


\section{Technical details}\label{C}

\begin{proof}[Proof of Theorem~\ref{theorem: asymptotic bias and variance}] Denote $\tilde \beta$ the Select+mLS. Conditioned on $\{\hat S=S\}$, for $\tau_n^2 \leq \Lambda_{min}(C_{11})$, we have
$$\tilde \beta_S = (X_{S}^TX_{S})^{-1}X_{S}^TY = \beta^*_S + (X_{S}^TX_{S})^{-1}X_{S}^T \epsilon. $$
Combine with triangle inequality,
\begin{eqnarray}
  ||E \tilde \beta - \beta^*||_2 & \leq & || E \tilde \beta \mathbb{I}_{\hat S =S}  - \beta^*||_2 + || E \tilde \beta \mathbb{I}_{\hat S \neq S}||_2 \nonumber \\
 & = & || E \{(X_{S}^TX_{S})^{-1}X_{S}^TY \mathbb{I}_{\hat S =S} \} - \beta^*||_2 + || E \tilde \beta \mathbb{I}_{\hat S \neq S} ||_2   \nonumber \\
 & \leq & ||E \{(X_{S}^TX_{S})^{-1}X_{S}^TY  - \beta^*_S \}||_2  + || E \{(X_{S}^TX_{S})^{-1}X_{S}^TY \mathbb{I}_{\hat S \neq S} \} ||_2  \nonumber \\
 & & + || E \tilde \beta \mathbb{I}_{\hat S \neq S} ||_2 \nonumber \\
 & = & || E \{(X_{S}^TX_{S})^{-1}X_{S}^TY \mathbb{I}_{\hat S \neq S} \} ||_2 + || E \tilde \beta \mathbb{I}_{\hat S \neq S} ||_2 \nonumber
 \label{eqn:1}
\end{eqnarray}
where $\mathbb{I}_A$ is the indicater function. The last equality holds since
$$E \{(X_{S}^TX_{S})^{-1}X_{S}^TY\} =  \beta^*_S + (X_{S}^TX_{S})^{-1}X_{S}^T E\epsilon = \beta^*_S. $$
By Cauchy-Schwarz inequality,
\[
 || E \{(X_{S}^TX_{S})^{-1}X_{S}^TY \mathbb{I}_{\hat S \neq S} \} ||_2^2 \leq  E ||\{(X_{S}^TX_{S})^{-1}X_{S}^TY\}||_2^2 P (\hat S \neq S),
\]
\[
 || E \tilde \beta \mathbb{I}_{\hat S \neq S} ||_2^2 \leq  E ||\tilde \beta||_2^2 P (\hat S \neq S).
\]
So we need to control $E ||\{(X_{S}^TX_{S})^{-1}X_{S}^TY\}||_2^2$ and $E ||\tilde \beta||_2^2$ respectively.
\begin{eqnarray}
 E ||\{(X_{S}^TX_{S})^{-1}X_{S}^TY\}||_2^2 & = & E|| \beta^*_S + (X_{S}^TX_{S})^{-1}X_{S}^T \epsilon ||_2^2 \nonumber \\
 & = & ||\beta^*_S||_2^2 +  E || (X_{S}^TX_{S})^{-1}X_{S}^T \epsilon ||_2^2    \nonumber \\
 & = & ||\beta^*||_2^2 +  \sigma^2 tr(X_{S}^TX_{S})^{-1} \nonumber \\
 & = & ||\beta^*||_2^2 + \frac{\sigma^2}{n} tr(C_{11}^{-1}). \nonumber
\end{eqnarray}
By definition $\eqref{eqn:modifiedols}$, we have
\begin{eqnarray}
 ||\tilde \beta||_2^2  = || \frac{1}{\sqrt{n}} V \tilde D U^T Y ||_2^2  =  \frac{1}{n} Y^T U \tilde D^T \tilde D U^T Y   \leq   \frac{1}{n} \tau_n^{-2} ||Y||_2^2,
 \label{eqn:tiltebetabound}
\end{eqnarray}
where $\tilde{D}$ is a $d\times n$ diagonal matrix with diagonal entries $\lambda_1^{-1}$, $\lambda_2^{-1}$,...,$\lambda_d^{-1}$ (Note that we take $\lambda_k^{-1}=0$ for all $\lambda_k< \tau_n$). The last equality holds since the largest singular value of $\tilde D$ is no more than $\tau_n^{-1}$ and $U$ is an orthogonal matrix. Moreover,
\begin{eqnarray}
 E||Y||_2^2  = E||  X \beta^* +\epsilon ||_2^2 =   ||X\beta^*||_2^2 + n\sigma^2.
\end{eqnarray}
Combining the above results, we obtain $\eqref{eqn:theo1}$.

Next, we prove the second part of Theorem~\ref{theorem: asymptotic bias and variance}.
\begin{eqnarray}
 E ||\tilde \beta - \beta^*||_2^2 & = & E ||\tilde \beta - \beta^*||_2^2\mathbb{I}_{\hat S =S} + E ||\tilde \beta - \beta^*||_2^2\mathbb{I}_{\hat S\neq S} \nonumber \\
 & = &  E || \{(X_{S}^TX_{S})^{-1}X_{S}^T \epsilon \}||_2^2 \mathbb{I}_{\hat S =S}  + E ||\tilde \beta - \beta^*||_2^2\mathbb{I}_{\hat S\neq S}   \nonumber \\
 & \leq & E || \{(X_{S}^TX_{S})^{-1}X_{S}^T \epsilon \}||_2^2 +2 ( E ||\tilde \beta||_2^2 \mathbb{I}_{\hat S\neq S} + E ||\beta^*||_2^2 \mathbb{I}_{\hat S\neq S} ) \nonumber \\
 & \leq & \frac{\sigma^2}{n} tr(C_{11}^{-1}) + 2 \sqrt{ P(\hat S\neq S) } (\sqrt{ E||\tilde \beta||_2^4} + ||\beta^*||_2^2).
 \label{eqn:1}
\end{eqnarray}
The last inequality holds for Cauchy-Schwarz inequality. Since $Y=X \beta^* + \epsilon$, we have
\[ ||Y||_2^4 = (||Y||_2^2)^2\leq 4(||X \beta^*||_2^2+||\epsilon||_2^2)^2 \leq 8 (||X \beta^*||_2^4+||\epsilon||_2^4). \]
Because $\epsilon_i \sim N(0,\sigma^2),\ i.i.d.$, we get $E(\epsilon_i)^4 = 3 \sigma^4$ and
\begin{equation}
\label{eqn:useinlemma3}
E||\epsilon||_2^4 = E(\sum\limits_{i=1}^{n} \epsilon_i^2)^2 = \sum\limits_{i=1}^{n} E\epsilon_i^4 + 2\sum\limits_{i<j}E\epsilon_i^2\epsilon_j^2 = (n^2+2n) \sigma^4,
\end{equation}
hence when $n\geq 2$
\[ E||Y||_2^4 \leq 8 (||X \beta^*||_2^4+ E||\epsilon||_2^4) \leq 16 (||X \beta^*||_2^4+ n^2 \sigma^4). \]
Connect $\eqref{eqn:tiltebetabound}$, then
\begin{eqnarray}
 E||\tilde \beta||_2^4 \leq  \frac{1}{n^2} \tau_n^{-4} E||Y||_2^4 \leq 16 \tau_n^{-4} (\frac{1}{n^2}||X\beta^*||_2^4 + \sigma^4 ).
 \label{eqn:beta4}
\end{eqnarray}
Taking $\eqref{eqn:beta4}$ back to $\eqref{eqn:1}$ gives the result.
\end{proof}
\ \\

\begin{proof}[Proof of Theorem~\ref{theorem: asymptotic bias and variance ridge}] Denote $\tilde \beta$ the Select+Ridge, we have
\begin{equation}
 \tilde \beta_{\hat S} =(X_{\hat S}^TX_{\hat S}+ \mu_n I)^{-1}X_{\hat S}^TY. \ 
 \label{eqn:lassoridge}
\end{equation}
Applying SVD decomposition of $\frac{1}{\sqrt{n}} X_{\hat S}$ in $\eqref{eqn:SVD}$, it is easy to obtain
\begin{equation}
 \tilde \beta_{\hat S} = VD_1D^TU^TY
 \label{eqn:lassoridge}
\end{equation}
where $D_1$ is a diagonal matrix with diagonal entries $\frac{\sqrt{n}}{n\lambda_1^2+\mu_n},...,\frac{\sqrt{n}}{n\lambda_d^2+\mu_n}$. Since $V$ is an orthogonal matrix,
\[ ||\tilde \beta_{\hat S}||_2^2= Y^T U D D_1^T V^T V D_1 D^T U^T Y = Y^T U D_2 U^T Y \]
where
\[ D_2 = diag \{ \frac{n\lambda_1^2}{(n\lambda_1^2+\mu_n)^2},...,\frac{n\lambda_d^2}{(n\lambda_d^2+\mu_n)^2} \}. \]
Therefore,
\[ ||\tilde \beta_{\hat S}||_2^2 \leq \Lambda_{max}(D_2) Y^T U U^T Y =  \Lambda_{max}(D_2) ||Y||_2^2 \leq \frac{1}{4 \mu_n} ||Y||_2^2, \]
the last inequality is due to $\frac{n\lambda_i^2}{(n\lambda_i^2+\mu_n)^2} \leq \frac{1}{4 \mu_n},\ i=1,...,d$. Then,
\[ E||\tilde \beta_{\hat S}||_2^2 \leq \frac{1}{4 \mu_n} E||Y||_2^2 = \frac{1}{4 \mu_n} ( ||X\beta^*||_2^2 + n\sigma^2), \]
\[ E||\tilde \beta_{\hat S}||_2^4 \leq \frac{1}{16 \mu_n^2} E||Y||_2^4 = \frac{1}{16 \mu_n^2} 16 (||X \beta^*||_2^4+ n^2 \sigma^4). \]
Combine Cauchy-Schwarz inequality, we have
\[ || E \tilde \beta_{\hat S} \mathbb{I}_{\hat S \neq S} ||_2^2 \leq P(\hat S \neq S) \frac{n}{4 \mu_n} ( \frac{1}{n} ||X\beta^*||_2^2 + \sigma^2), \]
\[ || E \tilde \beta_S \mathbb{I}_{\hat S \neq S} ||_2^2 \leq P(\hat S \neq S) \frac{n}{4 \mu_n} ( \frac{1}{n} ||X\beta^*||_2^2 + \sigma^2). \]
Moreover,
\begin{eqnarray}
 \tilde \beta_S - \beta^*_S & = & VD_1D^TU^TY - \beta^*_S = (VD_1D^TU^T  X_S -I ) \beta^*_S +  VD_1D^TU^T \epsilon  \nonumber \\
 & = & ( \sqrt{n} VD_1D^TU^T UDV^T -I )\beta^*_S +  VD_1D^TU^T \epsilon  \nonumber \\
 & = &   V D_3 V^T \beta^*_S + VD_1D^TU^T \epsilon  \nonumber
\end{eqnarray}
where
\[ D_3 =  diag\{ \frac{-\mu_n}{(n\lambda_1^2+\mu_n)},...,\frac{-\mu_n}{(n\lambda_s^2+\mu_n)} \}, \]
and using $(\frac{\mu_n}{n\lambda_i^2+\mu_n})^2 \leq \frac{\mu_n^2}{n^2 \Lambda_{min}^2},\ i=1,...,s $, we have
\[ ||E\tilde \beta_{S}- \beta^*_S||_2^2 = || V D_3 V^T \beta^*_S ||_2^2 \leq \frac{\mu_n^2}{n^2 \Lambda_{min}^2} ||\beta^*||_2^2. \]
Therefore,
\begin{eqnarray}
 ||E \tilde \beta - \beta^*||_2 & \leq & || E \tilde \beta \mathbb{I}_{\hat S =S}  - \beta^*||_2 + || E \tilde \beta \mathbb{I}_{\hat S \neq S}||_2 \nonumber \\
 & \leq &  ||E\tilde \beta_{S}- \beta^*_S||_2 + || E \tilde \beta_S \mathbb{I}_{\hat S \neq S}  ||_2 + || E \tilde \beta_{\hat S} \mathbb{I}_{\hat S \neq S} ||_2  \nonumber \\
 & \leq & \frac{\mu_n}{n \Lambda_{min}} ||\beta^*||_2 + 2 \sqrt{P(\hat S \neq S) \frac{n}{4 \mu_n} ( \frac{1}{n} ||X\beta^*||_2^2 + \sigma^2) }, \nonumber
\end{eqnarray}
which proves the first part of Theorem~\ref{theorem: asymptotic bias and variance ridge}. For the second part, we have
\begin{eqnarray}
 E ||\tilde \beta_S - \beta^*_S||_2^2 & = & ||V D_3 V^T \beta^*_S||_2^2 + E||VD_1D^TU^T \epsilon||_2^2 \nonumber \\
 & \leq & \frac{\mu_n^2}{n^2 \Lambda_{min}^2} ||\beta^*||_2^2 + \sigma^2 tr(VD_1D^TU^TUDD_1V^T)  \nonumber \\
 & = & \frac{\mu_n^2}{n^2 \Lambda_{min}^2} ||\beta^*||_2^2 + \sigma^2 \sum\limits_{i=1}^{s}  \frac{n\lambda_i^2}{(n\lambda_i^2+\mu_n)^2}. \nonumber
\end{eqnarray}
Algebraic operation yields
\[ \sigma^2 \sum\limits_{i=1}^{s}  \frac{n\lambda_i^2}{(n\lambda_i^2+\mu_n)^2}= \frac{\sigma^2}{n} tr\{(C_{11}+\frac{\mu_n}{n}I)^{-2}C_{11}\}, \]
then,
\[ E ||\tilde \beta_S - \beta^*_S||_2^2 \leq \frac{\sigma^2}{n} tr\{(C_{11}+\frac{\mu_n}{n}I)^{-2}C_{11}\} + \frac{\mu_n^2}{n^2 \Lambda_{min}^2} ||\beta^*||_2^2. \]
On the other hand,
\begin{eqnarray}
 E ||\tilde \beta - \beta^*||_2^2\mathbb{I}_{\hat S\neq S} & \leq  & 2 \sqrt{ P(\hat S\neq S) } (\sqrt{ E||\tilde \beta||_2^4} + ||\beta^*||_2^2) \nonumber \\
 & \leq &  2 \sqrt{ P(\hat S \neq S) }  \{ ||\beta^*||_2^2 + \frac{n}{\mu_n} [\frac{1}{n}||X\beta^*||_2^2  + \sigma^2]  \}.  \nonumber
\end{eqnarray}
Applying the same trick as $\eqref{eqn:1}$, we obtain the result.
\end{proof}
\ \\

\begin{proof}[Proof of Theorem~\ref{theorem: asymptotic normality}] We prove the results for Select+mLS and Select+Ridge respectively.

(1) Select+mLS: as stated in the proof of Theorem~\ref{theorem: asymptotic bias and variance}, conditioned on $\{\hat S=S\}$, when $n$ is large enough, $\tau_n^2 \propto \frac{1}{n^2} \leq \Lambda_{min} \leq \Lambda_{min}(C_{11})$, then Select+mLS $\tilde \beta$ satisfies:
$$\tilde \beta_S = (X_{S}^TX_{S})^{-1}X_{S}^TY = \beta^*_S + (X_{S}^TX_{S})^{-1}X_{S}^T \epsilon.$$
Because $\epsilon_i \sim N(0,\sigma^2),\ i.i.d.$, then $\sqrt{n}(X_{S}^TX_{S})^{-1}X_{S}^T \epsilon \sim N(0,\sigma^2 n(X_{S}^TX_{S})^{-1})$, which is $N(0,\sigma^2 C_{11}^{-1})$ since $C_{11} =\frac{1}{n}X_{S}^TX_{S} $. Therefore,
\begin{eqnarray}
  \hat \Psi(t) & = & P(\sqrt{n}( \tilde \beta_S - \beta^*_S ) \leq t) \nonumber \\
 & = & P(\sqrt{n}( \tilde \beta_S - \beta^*_S ) \leq t, \hat S =S) + P(\sqrt{n}( \tilde \beta_S - \beta^*_S )\leq t,\ \hat S\neq S )  \nonumber \\
 & = & P(\sqrt{n}(X_{S}^TX_{S})^{-1}X_{S}^T \epsilon \leq t,\ \hat S =S ) + P(\sqrt{n}( \tilde \beta_S - \beta^*_S )\leq t,\ \hat S\neq S ) \nonumber \\
 & = & \Psi(t) - P(\sqrt{n}(X_{S}^TX_{S})^{-1}X_{S}^T \epsilon \leq t,\ \hat S \neq S ) \nonumber \\
 & & + P(\sqrt{n}( \tilde \beta_S - \beta^*_S )\leq t,\ \hat S\neq S ). \nonumber
 \label{eqn:111}
\end{eqnarray}
Then
\begin{eqnarray}
  & &\mathop {sup}\limits_{t \in R^s} | \hat \Psi(t) - \Psi(t)|  \nonumber \\
 & \leq & \mathop {sup}\limits_{t \in R^s} \{ P(\sqrt{n}(X_{S}^TX_{S})^{-1}X_{S}^T \epsilon \leq t,\ \hat S \neq S ) + P(\sqrt{n}( \tilde \beta_S - \beta^*_S )\leq t,\ \hat S\neq S ) \} \nonumber \\
 & \leq & 2 P(\ \hat S \neq S ) \rightarrow 0, \ as\ n\rightarrow \infty. \nonumber
 \label{eqn:112}
\end{eqnarray}

(2) Select+Ridge: again, conditioned on $\{\hat S=S\}$, the Select+Ridge and Select+mLS are respectively
\[ \tilde \beta_{Select+Ridge,S} = VD_1D^TU^TY, \]
\[ \tilde \beta_{Select+mLS,S} = (X_{S}^TX_{S})^{-1}X_{S}^TY = \frac{1}{\sqrt{n}} V (D^TD)^{-1}D^TU^TY. \]
By simple calculation, the difference between these two estimators is
\begin{eqnarray}
 & & ||\sqrt{n}(\tilde \beta_{Select+Ridge,S} - \tilde \beta_{Select+mLS,S})||_2^2 \nonumber \\
 & = & n\cdot ||V diag\{\frac{-\sqrt{n}\mu_n}{n\lambda_1^2(n\lambda_1^2 +\mu_n)},... \frac{-\sqrt{n}\mu_n}{n\lambda_s^2(n\lambda_s^2 +\mu_n)} \} D^TU^TY||_2^2   \nonumber \\
 & \leq & \mathop {max}\limits_{1\leq i \leq s} \{ \frac{n \mu_n^2}{n\lambda_i^2(n\lambda_i^2 +\mu_n)^2} \} ||Y||_2^2 \nonumber \\
 & \leq & \frac{\mu_n^2}{n^2 \Lambda_{min}^3} ||Y||_2^2 = O_p(\mu_n^2)
 \label{eqn:112}
\end{eqnarray}
where the last equality comes from assumption $(g)$ and $E||Y||_2^2 = ||X\beta^*||_2^2+n\sigma^2=O(n^2)$. Therefore, Select+Ridge has the same asymptotic distribution as Select+mLS, which completes the proof.
\end{proof}
\ \\

In the following, we will prove the validity of residual bootstrap after Select+mLS. The proof for residual bootstrap after Select+Ridge is omitted since the techniques are almost the same.

Firstly, we re-characterize the conditional distribution of bootstrap error terms $\epsilon_i^*,i=1,...n$ as follows:
\begin{lemma}
\label{lemma:subgaussian}
Suppose that assumptions (a)-(e), (h) and (dd) are satisfied and that $P(\hat S \neq S) \rightarrow 0$, then with probability converging to 1, $\epsilon_i^*,i=1,...,n$ are conditionally i.i.d. subgaussian random variables. That is, there exists constant $C^*$, $c^*>0$ such that
\begin{equation}
\label{eqn:subguassionbootstrap}
P^*(|\epsilon_i^*| \geq t) \leq C^* e^{-c^*t^2}, \ \forall t\geq 0
\end{equation}
holds in probability.
\end{lemma}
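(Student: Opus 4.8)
The plan is to condition throughout on the event $\{\hat S = S\}$, which has probability tending to $1$ by hypothesis, and on that event to reduce the centered residuals to an explicit function of the Gaussian noise $\epsilon$. I focus on Select+mLS; the argument for Select+Ridge is analogous (the extra ridge bias term in the residual is of order $\mu_n$ and is absorbed in the same way as the perturbation terms below). For $n$ large enough we have $\tau_n^2 \propto n^{-2} \le \Lambda_{min} \le \Lambda_{min}(C_{11})$, so on $\{\hat S = S\}$ the estimator $\tilde \beta_{Select+mLS}$ coincides with ordinary least squares on $X_S$; hence $X\tilde\beta = H_S Y$ with $H_S = X_S(X_S^TX_S)^{-1}X_S^T$ the orthogonal projection onto the column space of $X_S$, and the residual vector is exactly $\hat\epsilon = (I - H_S)\epsilon$. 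Because the columns of $X_S$ are centered by Assumption (b), $\mathbf{1}$ is orthogonal to the column space of $X_S$, so $H_S\mathbf{1}=0$ and therefore $\hat\mu = \tfrac1n\mathbf{1}^T\hat\epsilon = \bar\epsilon := \tfrac1n\sum_{i=1}^n \epsilon_i$. Thus the centered residuals are $\hat\epsilon_j - \hat\mu = \epsilon_j - (H_S\epsilon)_j - \bar\epsilon$, and since $\epsilon_i^*$ is drawn uniformly from this list,
\begin{equation}
P^*(|\epsilon_i^*|\ge t) = \frac1n\,\#\{\,j:\ |\epsilon_j - (H_S\epsilon)_j - \bar\epsilon|\ge t\,\}.
\end{equation}

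Next I would control the two perturbation terms uniformly. For the mean, Chebyshev gives $|\bar\epsilon| = O_p(n^{-1/2})$. For $\max_{1\le j\le n}|(H_S\epsilon)_j|$, note that $(H_S\epsilon)_j \sim N\!\big(0,\sigma^2(H_S)_{jj}\big)$ and $(H_S)_{jj} = \tfrac1n\, x_{j,S}^T C_{11}^{-1} x_{j,S} \le \tfrac{1}{n\Lambda_{min}}\max_{1\le i\le n}\sum_{k=1}^s x_{ik}^2 = o(n^{-1/2})$ by Assumptions (c) and (h); a union bound over $j$ combined with the Gaussian tail then yields $\max_{1\le j\le n}|(H_S\epsilon)_j|\rightarrow_p 0$, because $1/\max_j (H_S)_{jj}$ grows strictly faster than $n^{1/2}$ and hence dominates $\log n$. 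Writing $\Delta_n := \max_j|(H_S\epsilon)_j| + |\bar\epsilon| = o_p(1)$, on the event $\{\Delta_n \le 1\}$ the inclusion $\{|\epsilon_j - (H_S\epsilon)_j - \bar\epsilon|\ge t\} \subseteq \{|\epsilon_j| \ge t - 1\}$ holds for every $j$.

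It remains to bound the empirical tail $\tfrac1n\#\{j:|\epsilon_j|\ge s\}$ uniformly in $s\ge 0$. Fixing any $a \in (0, 1/(2\sigma^2))$ and using the Chernoff-type bound $\mathbb{I}\{|\epsilon_j|\ge s\} \le e^{a\epsilon_j^2 - a s^2}$, one gets $\tfrac1n\#\{j:|\epsilon_j|\ge s\} \le e^{-as^2}\cdot \tfrac1n\sum_{j=1}^n e^{a\epsilon_j^2}$, and by the law of large numbers $\tfrac1n\sum_j e^{a\epsilon_j^2}\to E\,e^{a\epsilon_1^2} = (1-2a\sigma^2)^{-1/2} < \infty$, so with probability tending to $1$ this average is at most a constant $C_0$. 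Intersecting this event with $\{\hat S = S\}$ and $\{\Delta_n\le 1\}$ — an event of probability tending to $1$ — I would conclude that for all $t\ge 2$, $P^*(|\epsilon_i^*|\ge t)\le C_0 e^{-a(t-1)^2}\le C_0 e^{-at^2/4}$ (using $(t-1)^2\ge t^2/4$ for $t\ge 2$), while for $0\le t<2$ the trivial bound $P^*(|\epsilon_i^*|\ge t)\le 1 \le e^a e^{-at^2/4}$ suffices; taking $C^* = \max(C_0, e^a)$ and $c^* = a/4$ gives the claim. The main obstacles are exactly the two uniformity steps — the uniform-in-$s$ empirical tail bound and the uniform-in-$j$ control of $(H_S\epsilon)_j$, the latter being where Assumption (h) is indispensable; by contrast the conditioning on $\{\hat S = S\}$ causes no difficulty, since we work throughout on a single event of probability tending to $1$ on which $\hat\epsilon = (I-H_S)\epsilon$ holds as a deterministic function of $\epsilon$.
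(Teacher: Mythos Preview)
Your proof is correct and follows the same overall strategy as the paper's: decompose the centered residual into the original Gaussian noise plus a perturbation, show the perturbation is uniformly $o_p(1)$ so that the empirical tail of the centered residuals is controlled by the empirical tail of the $\epsilon_j$'s at a shifted threshold, and then bound that empirical tail uniformly in the threshold. The tactical execution differs in two places, and in both yours is the more streamlined one. First, to show $\max_j |(H_S\epsilon)_j| \to_p 0$ you exploit directly that $(H_S\epsilon)_j$ is Gaussian with variance $\sigma^2(H_S)_{jj} = o(n^{-1/2})$ (from (c) and (h)) and apply a Gaussian tail plus a union bound over $j$; the paper instead uses Markov's inequality with the cruder bound $\max_j |x_{j,S}^T(X_S^TX_S)^{-1}X_S^T\epsilon|^2 \le \max_j\|x_{j,S}\|^2\cdot\|(X_S^TX_S)^{-1}X_S^T\epsilon\|^2$, whose expectation is $\max_j\|x_{j,S}\|^2\cdot\tfrac{\sigma^2 s}{n\Lambda_{min}}$, so that (dd) is needed in addition to (h). Second, for the uniform-in-threshold empirical tail bound you use the one-line Chernoff inequality $\mathbb{I}\{|\epsilon_j|\ge s\}\le e^{a(\epsilon_j^2 - s^2)}$ and the law of large numbers for $\tfrac1n\sum_j e^{a\epsilon_j^2}$; the paper instead establishes $E\sup_{t\ge 0}\{e^{c^*t^2}\mathbb{I}_{|\epsilon_1|\ge t/3}\}<\infty$ through a more involved tail-integral computation before invoking the SLLN. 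Both routes arrive at explicit deterministic constants $(C^*,c^*)$ depending only on $\sigma^2$.
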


\begin{proof} Note that $P^*(|\epsilon_i^*| \geq t) = \frac{1}{n} \sum\limits_{i=1}^{n} \mathbb{I}_{|\hat \epsilon_i - \hat \mu| \geq t}$, hence $\eqref{eqn:subguassionbootstrap}$ is equivalent to
\begin{equation}
 \label{eqn:subguassionbootstrap1}
 \mathop {sup}\limits_{t\geq 0} \{ \frac{1}{n} \sum\limits_{i=1}^{n} e^{c^*t^2} \mathbb{I}_{|\hat \epsilon_i - \hat \mu| \geq t} \} \leq C^*.
\end{equation}
We know that
\begin{eqnarray}
  \hat \epsilon_i - \hat \mu & = & y_i - x_i^T \tilde \beta - (\overline{y} - \overline{x}^T \tilde \beta) \nonumber \\
 & = & x_i^T \beta^* + \epsilon_i - x_i^T \tilde \beta - (\overline{x}^T \beta^* + \overline{\epsilon} - \overline{x}^T \tilde \beta) \nonumber \\
 & = & x_i ^T (\beta^* - \tilde \beta) + \epsilon_i - \overline{\epsilon} \nonumber
 \label{eqn:1121}
\end{eqnarray}
where $x_i^T$ is the $i$-th row of $X$, $\overline{y}=\frac{1}{n} \sum\limits_{i=1}^{n} {y_i}$,$\overline{\epsilon}= \frac{1}{n} \sum\limits_{i=1}^{n} {\epsilon_i}$ and $\overline{x}= \frac{1}{n} \sum\limits_{i=1}^{n} {x_i} = 0$.
It is easy to see that $\mathop {sup}\limits_{t\geq 0} \{ \frac{1}{n} \sum\limits_{i=1}^{n} e^{c^*t^2} \mathbb{I}_{|\hat \epsilon_i - \hat \mu| \geq t} \}$ can be bounded by
\begin{eqnarray}
 \frac{1}{n} \sum\limits_{i=1}^{n} \left \{ \mathop {sup}\limits_{t\geq 0} \{ e^{c^*t^2} \mathbb{I}_{|x_i^T (\beta^* - \tilde \beta)| \geq t/3} \} + \mathop {sup}\limits_{t\geq 0} \{ e^{c^*t^2} \mathbb{I}_{|\overline{\epsilon}| \geq t/3} \} + \mathop {sup}\limits_{t\geq 0} \{ e^{c^*t^2} \mathbb{I}_{|\epsilon_i| \geq t/3} \} \right \}. \nonumber \\
 \label{eqn:1121}
\end{eqnarray}

For the first term in $\eqref{eqn:1121}$, let $x_{i,S} = (x_{i1},...,x_{is})^T$,
\begin{eqnarray}
  & & P(\mathop {max}\limits_{1\leq i \leq n} |x_i^T (\beta^* - \tilde \beta)| \geq 1/3) \nonumber \\
  & = & P(\mathop {max}\limits_{1\leq i \leq n} |x_i^T (\beta^* - \tilde \beta)| \geq 1/3,\hat S=S) \nonumber \\
  & & + P(\mathop {max}\limits_{1\leq i \leq n} |x_i^T (\beta^* - \tilde \beta)| \geq 1/3,\hat S \neq S)  \nonumber \\
 & \leq & P(\mathop {max}\limits_{1\leq i \leq n} |x_{i,S}^T(X_{S}^TX_{S})^{-1}X_{S}^T \epsilon | \geq 1/3) + P(\hat S \neq S).
\end{eqnarray}
By Markov inequality,
\[ P(\mathop {max}\limits_{1\leq i \leq n} |x_{i,S}^T(X_{S}^TX_{S})^{-1}X_{S}^T \epsilon | \geq 1/3) \leq 9 E\mathop {max}\limits_{1\leq i \leq n}|x_{i,S}^T(X_{S}^TX_{S})^{-1}X_{S}^T \epsilon |^2.  \]
Note that
\[ \mathop {max}\limits_{1\leq i \leq n}|x_{i,S}^T(X_{S}^TX_{S})^{-1}X_{S}^T \epsilon |^2 \leq \mathop {max}\limits_{1\leq i \leq n}||x_{i,S}||^2 \cdot ||(X_{S}^TX_{S})^{-1}X_{S}^T \epsilon ||^2, \]
therefore
\[ P(\mathop {max}\limits_{1\leq i \leq n} |x_{i,S}^T(X_{S}^TX_{S})^{-1}X_{S}^T \epsilon | \geq 1/3) \leq 9 \mathop {max}\limits_{1\leq i \leq n}||x_{i,S}||^2  \cdot E ||(X_{S}^TX_{S})^{-1}X_{S}^T \epsilon ||^2.  \]
Because $\epsilon \sim N(0,\sigma^2 I)$, hence $(X_{S}^TX_{S})^{-1}X_{S}^T \epsilon \sim N(0,\sigma^2 (X_{S}^TX_{S})^{-1})$. Then,
\[ E||(X_{S}^TX_{S})^{-1}X_{S}^T \epsilon ||^2= \sigma^2 tr(X_{S}^TX_{S})^{-1} =\sigma^2 \frac{1}{n} tr(C_{11}^{-1}) \leq \frac{\sigma^2}{\Lambda_{min}} \frac{s}{n}, \]
hence
\[ P(\mathop {max}\limits_{1\leq i \leq n} |x_{i,S}^T(X_{S}^TX_{S})^{-1}X_{S}^T \epsilon | \geq 1/3) \leq 9 \frac{\sigma^2}{\Lambda_{min}} \frac{s}{n} \mathop {max}\limits_{1\leq i \leq n}||x_{i,S}||^2 \rightarrow 0 \]
where "$\rightarrow 0$" comes from the assumptions $s^2/n \rightarrow 0$ and $\mathop {max}\limits_{1\leq i \leq n}||x_{i,S}||^2 = \mathop {max}\limits_{1\leq i \leq n}\sum\limits_{j=1}^{s}x_{ij}^2=o(n^{\frac{1}{2}})$.
Connect with $P(\hat S \neq S) \rightarrow 0$, we have
\[ P(\mathop {max}\limits_{1\leq i \leq n} |x_i^T (\beta^* - \tilde \beta)| \geq 1/3) \rightarrow 0, \]
hence
\[ P(\frac{1}{n} \sum\limits_{i=1}^{n} \mathop {sup}\limits_{t\geq 1} \{ e^{c^*t^2} \mathbb{I}_{|x_i^T (\beta^* - \tilde \beta)| \geq t/3} \} \leq  e^{c^*} ) \geq  P(\mathop {max}\limits_{1\leq i \leq n} |x_i^T (\beta^* - \tilde \beta)| < 1/3) \rightarrow 1. \]
The inequality holds since it is easy to show that
\[ \{\frac{1}{n} \sum\limits_{i=1}^{n} \mathop {sup}\limits_{t\geq 1} \{ e^{c^*t^2} \mathbb{I}_{|x_i^T (\beta^* - \tilde \beta)| \geq t/3} \} \leq  e^{c^*} \} \supseteq \{ \mathop {max}\limits_{1\leq i \leq n} |x_i^T (\beta^* - \tilde \beta)| < 1/3\}. \]

It is clear that
\[ \frac{1}{n} \sum\limits_{i=1}^{n} \mathop {sup}\limits_{0 \leq t \leq 1} \{ e^{c^*t^2} \mathbb{I}_{|x_i^T (\beta^* - \tilde \beta)| \geq t/3} \} \leq e^{c^*}, \]
therefore, with probability going to 1, we have
\begin{eqnarray}
\label{eqn:trick}
& &\frac{1}{n} \sum\limits_{i=1}^{n} \mathop {sup}\limits_{t\geq 0} \{ e^{c^*t^2} \mathbb{I}_{|x_i^T (\beta^* - \tilde \beta)| \geq t/3} \} \nonumber \\
&=& \max ( \frac{1}{n} \sum\limits_{i=1}^{n} \mathop {sup}\limits_{0\leq t \leq 1} \{ e^{c^*t^2} \mathbb{I}_{|x_i^T (\beta^* - \tilde \beta)| \geq t/3} \},\  \frac{1}{n} \sum\limits_{i=1}^{n}  \mathop {sup}\limits_{t\geq 1} \{ e^{c^*t^2} \mathbb{I}_{|x_i^T (\beta^* - \tilde \beta)| \geq t/3} \} ) \nonumber \\
& & \leq e^{c^*}.  \nonumber \\
\end{eqnarray}

For the second term in $\eqref{eqn:1121}$, by strong law of large numbers, we have $\overline{\epsilon}\rightarrow 0, \ a.s.$, then
\[ P(|\overline{\epsilon}|\geq 1/3) \rightarrow 0. \]
It is easy to show that
\[ \{\mathop {sup}\limits_{t\geq 1} \{ e^{c^*t^2} \mathbb{I}_{|\overline{\epsilon}| \geq t/3}  \} \supseteq \{ |\overline{\epsilon}| < 1/3 \}.\]
Hence
\begin{equation}
\label{eqn:trick1}
P(\mathop {sup}\limits_{t\geq 1} \{ e^{c^*t^2} \mathbb{I}_{|\overline{\epsilon}| \geq t/3} \} \leq e^{c^*}) \geq P(|\overline{\epsilon}| < 1/3) \rightarrow 1.
\end{equation}
Using the same trick as $\eqref{eqn:trick}$, we have
\begin{equation}
\frac{1}{n} \sum\limits_{i=1}^{n} \mathop {sup}\limits_{t\geq 0} \{e^{c^*t^2} \mathbb{I}_{|\overline{\epsilon}| \geq t/3} \} = \max ( \mathop {sup}\limits_{0\leq t \leq 1} \{e^{c^*t^2} \mathbb{I}_{|\overline{\epsilon}| \geq t/3} \} ,\ \mathop {sup}\limits_{t\geq 1} \{ e^{c^*t^2} \mathbb{I}_{|\overline{\epsilon}| \geq t/3} \} ) \leq e^{c^*}
\end{equation}
holds in probability.

For the third term in $\eqref{eqn:1121}$, we will show that if $c^* = \frac{1}{36\sigma^2}$
\begin{equation}
\label{eqn:finite}
E \mathop {sup}\limits_{t\geq 0} \{e^{c^*t^2} \mathbb{I}_{|\epsilon_1| \geq t/3}\}  =  \int\limits_{0}^{\infty} P(\mathop {sup}\limits_{t\geq 0} \{e^{c^*t^2} \mathbb{I}_{|\epsilon_1| \geq t/3}\} >u) d_u <\infty.
\end{equation}
The above integral can be divided into two parts $[0,e^{9\sigma^2 c^*}]$ and $[e^{9\sigma^2 c^*},\infty]$. And the first part is bounded using $P(\mathop {sup}\limits_{t\geq 0} \{e^{c^*t^2} \mathbb{I}_{|\epsilon_1| \geq t/3}\} >u) \leq 1$ that
\begin{equation}
\label{eqn:part1}
\int\limits_{0}^{e^{9\sigma^2 c^*}} P(\mathop {sup}\limits_{t\geq 0} \{e^{c^*t^2} \mathbb{I}_{|\epsilon_1| \geq t/3}\} >u) d_u \leq e^{9\sigma^2 c^*}.
\end{equation}
For the second part, we have
\begin{eqnarray}
& & P(\mathop {sup}\limits_{t\geq 0} \{e^{c^*t^2} \mathbb{I}_{|\epsilon_1| \geq t/3}\} >u) \nonumber \\
 &= & P(\mathop {sup}\limits_{t\geq 0} \{e^{c^*t^2} \mathbb{I}_{|\epsilon_1| \geq t/3}\} >u, |\epsilon_1| < \sqrt{\frac{\log u}{c^*}}/3 ) \nonumber \\
  & & + P(\mathop {sup}\limits_{t\geq 0} \{e^{c^*t^2} \mathbb{I}_{|\epsilon_1| \geq t/3}\} >u ,|\epsilon_1| \geq \sqrt{\frac{\log u}{c^*}}/3 )  \nonumber \\
 &\leq & P(\mathop {sup}\limits_{t\geq 0} \{e^{c^*t^2} \mathbb{I}_{|\epsilon_1| \geq t/3}\} >u, |\epsilon_1| < \sqrt{\frac{\log u}{c^*}}/3 ) + P(|\epsilon_1| \geq \sqrt{\frac{\log u}{c^*}}/3 ).  \nonumber
\end{eqnarray}
On $\{ |\epsilon_1| < \sqrt{\frac{\log u}{c^*}}/3 \}$, we have
\begin{equation}
e^{c^*t^2} \mathbb{I}_{|\epsilon_1| \geq t/3} \left \{
  \begin{array}{cc}
    \leq u  & if \ t \leq \sqrt{\frac{\log u}{c^*}} \\
    0 & otherwise \\
  \end{array}
\right.
\end{equation}
Hence on $\{ |\epsilon_1| < \sqrt{\frac{\log u}{c^*}}/3 \}$,
\[ \mathop {sup}\limits_{t\geq 0} \{e^{c^*t^2} \mathbb{I}_{|\epsilon_1| \geq t/3}\} \leq u, \]
then
\[ P(\mathop {sup}\limits_{t\geq 0} \{e^{c^*t^2} \mathbb{I}_{|\epsilon_1| \geq t/3}\} >u, |\epsilon_1| < \sqrt{\frac{\log u}{c^*}}/3 ) = 0. \]
Therefore,
\begin{equation}
\label{eqn:tail}
P(\mathop {sup}\limits_{t\geq 0} \{e^{c^*t^2} \mathbb{I}_{|\epsilon_1| \geq t/3}\} >u) \leq P(|\epsilon_1| \geq \sqrt{\frac{\log u}{c^*}}/3 ) \leq 2 e^{-\frac{1}{2\sigma^2} \frac{\log u}{9 c^*}} = \frac{2}{u^2}
\end{equation}
where the second inequality comes from the Gaussian tail bound,
$$P(|\epsilon_1| \geq t ) \leq 2 e^{-\frac{t^2}{2\sigma^2}} \  \forall t \geq \sigma.$$
Therefore
\begin{equation}
\label{eqn:part2}
\int\limits_{e^{9\sigma^2 c^*}}^{\infty} P(\mathop {sup}\limits_{t\geq 0} \{e^{c^*t^2} \mathbb{I}_{|\epsilon_1| \geq t/3}\} >u) d_u \leq \int\limits_{e^{9\sigma^2 c^*}}^{\infty} \frac{2}{u^2} d_u < \infty.
\end{equation}
Combine $\eqref{eqn:part1}$ and $\eqref{eqn:part2}$, we obtain $\eqref{eqn:finite}$.

Again, by strong law of large numbers,
\[ \frac{1}{n} \sum\limits_{i=1}^{n} \mathop {sup}\limits_{t\geq 0} \{ e^{c^*t^2} \mathbb{I}_{|\epsilon_i| \geq t/3} \} \rightarrow E \mathop {sup}\limits_{t\geq 0} \{ e^{c^*t^2} \mathbb{I}_{|\epsilon_i| \geq t/3} \} \ a.s., \]
hence,
\[ \frac{1}{n} \sum\limits_{i=1}^{n} \mathop {sup}\limits_{t\geq 0} \{ e^{c^*t^2} \mathbb{I}_{|\epsilon_i| \geq t/3} \} \leq 2 E \mathop {sup}\limits_{t\geq 0} \{ e^{c^*t^2} \mathbb{I}_{|\epsilon_i| \geq t/3} \}  \ holds \ in\ probability. \]
If we chose $C^*=(2 e^{c^*} + 2 E \mathop {sup}\limits_{t\geq 0} \{ e^{c^*t^2} \mathbb{I}_{|\epsilon_i| \geq t/3} \})$ and $c^* = \frac{1}{36\sigma^2}$, then $\eqref{eqn:subguassionbootstrap}$ holds in probability, which verifies the claim.
\end{proof}

Secondly, we need the following Lemma~\ref{lemma:bootstrap bias}, which provides upper bounds of mean squared error (MSE) of Select+mLS $\tilde \beta^*$ based on the resample $(X,Y^*)$. Let $\sigma_*^2 = C^*/c^*$.
\begin{lemma}
\label{lemma:bootstrap bias}
Under assumptions (a)-(h) and (dd), the following hold
$$E^*||\tilde \beta^* - \tilde \beta||_2^2 \mathbb{I}_{ \hat S^* =  S } = O_p(  \frac{\sigma_*^2}{n} tr(C_{11}^{-1})),$$
$$E^*||\tilde \beta^* - \tilde \beta||_2^2 \mathbb{I}_{ \hat S^* \neq  S } = o_p(e^{-n^{c_2}/4})  $$
where $E^*$ denotes the conditional expectation given the error variables $\{\epsilon_i,i=1,...,n\}$.
\end{lemma}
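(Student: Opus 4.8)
The plan is to reprove, in the ``bootstrap world'', the argument used for Theorem~\ref{theorem: asymptotic bias and variance}: the role of the true parameter is played by the data-dependent but (conditionally on $\{\epsilon_i\}$) fixed vector $\tilde\beta$, and the role of the Gaussian noise is played by the resampled residuals $\epsilon^*$, which by Lemma~\ref{lemma:subgaussian} are, on an event $D_n$ with $P(D_n)\to 1$, conditionally i.i.d.\ subgaussian with parameters $(C^*,c^*)$; moreover $E^*\epsilon^*_1=0$ and $E^*(\epsilon^*_1)^2=\hat\sigma^2:=n^{-1}\sum_i(\hat\epsilon_i-\hat\mu)^2$. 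First I would reduce everything to the event $\{\hat S=S\}$: since $P(\hat S\neq S)=o(e^{-n^{c_2}})\to 0$ by Lemma~\ref{lemma: model selection}, on $\{\hat S\neq S\}$ every indicator appearing here vanishes with probability tending to one, so any nonnegative quantity multiplied by $\mathbb{I}_{\hat S\neq S}$ is automatically $o_p(a_n)$ for every sequence $a_n$; it therefore suffices to bound the two conditional expectations on $\{\hat S=S\}$, where $\tilde\beta$ has support exactly $S$ (hence $\tilde\beta_{S^c}=0$ and $X\tilde\beta=X_S\tilde\beta_S$) and where $\{\hat S^*\neq S\}=\{\hat S^*\neq\hat S\}$.

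For the ``$\hat S^*=S$'' term: for $n$ large $\tau_n^2\le\Lambda_{min}\le\Lambda_{min}(C_{11})$, so on $\{\hat S=S\}\cap\{\hat S^*=S\}$ the mLS estimator reduces to OLS on $X_S$ and $\tilde\beta^*_S-\tilde\beta_S=(X_S^TX_S)^{-1}X_S^T(X_S\tilde\beta_S+\epsilon^*)-\tilde\beta_S=(X_S^TX_S)^{-1}X_S^T\epsilon^*$, while $\tilde\beta^*_{S^c}=\tilde\beta_{S^c}=0$. Bounding $\mathbb{I}_{\hat S^*=S}\le 1$ and taking $E^*$ gives $E^*\|\tilde\beta^*-\tilde\beta\|_2^2\mathbb{I}_{\hat S^*=S}\le\hat\sigma^2\,tr((X_S^TX_S)^{-1})=\tfrac{\hat\sigma^2}{n}tr(C_{11}^{-1})$. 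On $D_n$ one has $\hat\sigma^2=E^*(\epsilon^*_1)^2=\int_0^\infty P^*(|\epsilon^*_1|\ge\sqrt u)\,du\le\int_0^\infty C^*e^{-c^*u}\,du=C^*/c^*=\sigma_*^2$, which yields the claimed $O_p(\tfrac{\sigma_*^2}{n}tr(C_{11}^{-1}))$ bound (combining with the negligible $\mathbb{I}_{\hat S\neq S}$ part via the $O_p$ definition exactly as above).

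For the ``$\hat S^*\neq S=\hat S$'' term I would use $\|\tilde\beta^*-\tilde\beta\|_2^2\mathbb{I}_{\hat S^*\neq\hat S}\le 2\|\tilde\beta^*\|_2^2\mathbb{I}_{\hat S^*\neq\hat S}+2\|\tilde\beta\|_2^2\mathbb{I}_{\hat S^*\neq\hat S}$, take $E^*$, and apply Cauchy--Schwarz to the first term: $E^*\|\tilde\beta^*\|_2^2\mathbb{I}_{\hat S^*\neq\hat S}\le(E^*\|\tilde\beta^*\|_2^4)^{1/2}(P^*(\hat S^*\neq\hat S))^{1/2}$. Then bound $E^*\|\tilde\beta^*\|_2^4$ via the mLS bound $\|\tilde\beta^*\|_2^2\le\tfrac1n\tau_n^{-2}\|Y^*\|_2^2$ (as in \eqref{eqn:tiltebetabound}), the inequality $\|Y^*\|_2^4\le 8(\|X\tilde\beta\|_2^4+\|\epsilon^*\|_2^4)$, the conditional moment bound $E^*\|\epsilon^*\|_2^4=O_p(n^2)$ (since $E^*(\epsilon^*_1)^4=O_p(1)$ on $D_n$), and $\|X\tilde\beta\|_2^4=O_p(n^4)$ (on $\{\hat S=S\}$ one has $X\tilde\beta=X\beta^*+P_S\epsilon$ with $P_S$ the projection onto the column space of $X_S$, so $\|X\beta^*\|_2^2=O(n^2)$ by assumption~(g) and $\|P_S\epsilon\|_2^2=O_p(s)$); this gives $E^*\|\tilde\beta^*\|_2^4=O_p(n^6)$ because $\tau_n\propto 1/n$, and similarly $\|\tilde\beta\|_2^2=O_p(n)$. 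Since $P^*(\hat S^*\neq\hat S)=o_p(e^{-n^{c_2}})$ by Lemma~\ref{lemma:bootstrap selection consistency}, both pieces are $o_p(n^3 e^{-n^{c_2}/2})=o_p(e^{-n^{c_2}/4})$.

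I expect the main obstacle to be the bookkeeping between the two layers of randomness: the ``$O_p/o_p$'' statements refer to the data $\{\epsilon_i\}$ while $E^*$ and $P^*$ integrate out the resampling, so one must work on the intersection of $D_n$, $\{\hat S=S\}$, and the good event from Lemma~\ref{lemma:bootstrap selection consistency}, and verify that the conditional moment bounds on $\epsilon^*$ extracted from Lemma~\ref{lemma:subgaussian} ($E^*(\epsilon^*_1)^2\le\sigma_*^2$ and $E^*(\epsilon^*_1)^4=O_p(1)$, uniformly enough) suffice to carry the estimates of Theorem~\ref{theorem: asymptotic bias and variance} through. The remaining algebra is routine and mirrors the proofs of Theorems~\ref{theorem: asymptotic bias and variance} and~\ref{theorem: asymptotic bias and variance ridge}.
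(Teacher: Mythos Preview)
Your proposal is correct and follows essentially the same route as the paper's proof: condition on $\{\hat S=S\}$, reduce the $\hat S^*=S$ piece to OLS on $X_S$ and bound by $\tfrac{\sigma_*^2}{n}\,tr(C_{11}^{-1})$ using the subgaussian moments from Lemma~\ref{lemma:subgaussian}; for the $\hat S^*\neq S$ piece, mirror the Cauchy--Schwarz argument of Theorem~\ref{theorem: asymptotic bias and variance} with $\tilde\beta$ in place of $\beta^*$ and $\epsilon^*$ in place of $\epsilon$, then plug in $P^*(\hat S^*\neq\hat S)=o_p(e^{-n^{c_2}})$ and the polynomial bounds on $\|\tilde\beta\|_2^2$ and $\|X\tilde\beta\|_2^2$. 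One small point: the lemma is stated under assumptions (a)--(h) for a generic Select+mLS procedure, so the exponential decay of $P(\hat S\neq S)$ and $P^*(\hat S^*\neq\hat S)$ should be invoked directly from assumption~(f) rather than from the Lasso-specific Lemmas~\ref{lemma: model selection} and~\ref{lemma:bootstrap selection consistency}, which require the additional conditions (i), (j) and the Irrepresentable Condition not listed in the hypotheses here.
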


\begin{proof} By Lemma~\ref{lemma:subgaussian}, we have shown that with probability going to 1, $\epsilon^*_i$ are i.i.d. subgaussian variables, i.e.,
\[ P^*(|\epsilon_i^*| \geq t) \leq C^* e^{-c^*t^2}, \ \forall t\geq 0. \]
Simple calculation yields $E^*(\epsilon_1^*)^2 \leq \sigma_*^2$ and $E^*(\epsilon_1^*)^4 \leq \sigma_*^4$. Conditioned on $\{\hat S =S \}$, we have
\[ ||\tilde \beta^* - \tilde \beta||_2^2 \mathbb{I}_{ \hat S^* =  S } = ||(X_{S}^TX_{S})^{-1}X_{S}^TY^* - \tilde \beta||_2^2 \mathbb{I}_{ \hat S^* =  S } = ||(X_{S}^TX_{S})^{-1}X_{S}^T\epsilon^*||_2^2 \mathbb{I}_{ \hat S^* =  S }. \]
Take expectation, then
\begin{eqnarray}
  E^*||\tilde \beta^* - \tilde \beta||_2^2 \mathbb{I}_{ \hat S^* =  S }  \leq  E^*||(X_{S}^TX_{S})^{-1}X_{S}^T\epsilon^*||_2^2 \leq
  \frac{\sigma_*^2}{n} tr(C_{11}^{-1}). \nonumber
\end{eqnarray}
Since $P(\hat S =S)\rightarrow 1 $, we obtain
\[ E^*||\tilde \beta^* - \tilde \beta||_2^2 \mathbb{I}_{ \hat S^* =  S } = O_p(  \frac{\sigma_*^2}{n} tr(C_{11}^{-1})). \]
For the second part, we also conditioned on $\{\hat S =S \}$. Using the same procedure as proving Theorem~\ref{theorem: asymptotic bias and variance}, we can get
\begin{eqnarray}
 E^* ||\tilde \beta^* - \tilde \beta||_2^2\mathbb{I}_{ \hat S^* \neq  S} & \leq & 8 \sqrt{ P^*(\hat S^* \neq \hat S) }  \{  ||\tilde \beta||_2^2 + \frac{1}{\tau_n^2} \frac{1}{n}||X\tilde \beta||_2^2  +\frac{1}{\tau_n^2} \sigma_*^2 \}.  \nonumber \\
 \label{eqn:a}
\end{eqnarray}

By Theorem~\ref{theorem: asymptotic bias and variance},
\begin{equation}
 E||\tilde \beta - \beta^*||_2^2 \leq \frac{\sigma^2}{n} tr(C_{11}^{-1}) + 8 \sqrt{ P(\hat S \neq S) }  \left\{ ||\beta^*||_2^2 + \frac{1}{\tau_n^2}\frac{1}{n}||X\beta^*||_2^2  + \frac{1}{\tau_n^2}\sigma^2 \right\}.
\end{equation}
As shown in Corollary~\ref{corollary: asymptotic bias and variance}, we have
\[ E||\tilde \beta - \beta^*||_2^2 =  O( \frac{\sigma^2}{\Lambda_{min}} \frac{s}{n}), \]
then $||\tilde \beta- \beta^* ||_2^2 \rightarrow_p 0$ and therefore $||\tilde \beta||_2^2 = ||\beta^* ||_2^2 + o_p(1)$. Moreover,
\begin{equation}
\label{eqn:1usedintheorem3}
\frac{1}{n}||X\tilde \beta-X\beta^*||_2^2 = \frac{1}{n} ||X_S(X_{S}^TX_{S})^{-1}X_{S}^T\epsilon||_2^2 \rightarrow_p 0
\end{equation}
where "$\rightarrow_p 0$" comes from
\[ E \frac{1}{n} ||X_S(X_{S}^TX_{S})^{-1}X_{S}^T\epsilon||_2^2 = \frac{s}{n} \sigma^2 \rightarrow 0, \]
therefore $\frac{1}{n}||X\tilde \beta||_2^2 = \frac{1}{n}||X\beta^*||_2^2 + o_p(1)$. Taking these results back to $\eqref{eqn:a}$ and combining assumption (f), we have
\begin{eqnarray}
E^* ||\tilde \beta^* - \tilde \beta||_2^2\mathbb{I}_{ \hat S^* \neq \hat S} & \leq & 8 o_p(e^{-n^{c_2}/2}) \{  || \beta^*||_2^2 + \frac{1}{\tau_n^2} \frac{1}{n}||X \beta^*||_2^2  + \frac{1}{\tau_n^2} \sigma_*^2 + o_p(1) \} \nonumber \\
&=& o_p(e^{-n^{c_2}/4}) \nonumber
\end{eqnarray}
where the last equality holds since we suppose that $\tau_n \propto \frac{1}{n}$  and that
\[ \frac{1}{n}||X\beta^*||_2^2 =O(n). \]
\end{proof}

Finally, we can prove Theorem~\ref{theorem:bootstraplassovalid} now.

\begin{proof}[Proof of Theorem~\ref{theorem:bootstraplassovalid}] Using the same notations as \cite{BickelFreedman1981a}, let $F$ be the true distribution of $\varepsilon_i$; let $F_n$ be the empirical distribution of $\epsilon_1,...,\epsilon_n$; let $\tilde F_n$ be the empirical distribution of the residuals $\hat \epsilon_1,...,\hat \epsilon_n$; and let $\hat F_n$ be $\tilde F_n$ centered at its mean $\hat \mu$. We first show that the Mallows metric of $G_n$ and $G_n^*$ can be bounded by $\sqrt{s}\cdot d(F,\hat F_n)$.

\begin{lemma}
\label{lemma:theorem31}
Suppose that conditions (a)-(h) and (dd) are satisfied, then
\[ d^2(G_n,G_n^*) \leq 4tr\left\{(\frac{1}{n}X_S^TX_S)^{-1}\right\} d^2(F,\hat F_n)+o_p(1) \leq \frac{4s}{\Lambda_{min}} d^2(F,\hat F_n) +o_p(1). \]
\end{lemma}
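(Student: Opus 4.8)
The plan is to bound $d(G_n,G_n^*)$ by producing a single explicit coupling of a $G_n$-distributed vector with a $G_n^*$-distributed vector: by the definition of the Mallows metric the second moment of any such coupling dominates $d^2(G_n,G_n^*)$. Work conditionally on the errors $\{\epsilon_i,i=1,\dots,n\}$, so that $X$, $Y$, $\tilde\beta$, $\hat S$ and the centered residual law $\hat F_n$ are all fixed. On an auxiliary probability space draw, independently over $i$, a pair $(\xi_i,\epsilon_i^*)$ attaining the Mallows distance between $F$ and $\hat F_n$ --- such pairs exist by Lemma~8.1 of \cite{BickelFreedman1981a} --- so $\xi_i\sim F$, $\epsilon_i^*\sim\hat F_n$, $E^*(\xi_i-\epsilon_i^*)^2=d^2(F,\hat F_n)$, and $E^*\xi_i=E^*\epsilon_i^*=0$ because $\hat F_n$ is centered. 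Running the full procedure on $(X,\,X\beta^*+\xi)$ produces $\tilde\beta^{\mathrm{a}}$, $\hat S^{\mathrm{a}}$ and $T_n^{\mathrm{a}}=\sqrt n(\tilde\beta^{\mathrm{a}}-\beta^*)$, whose conditional law is exactly $G_n$ since $\xi\stackrel{d}{=}\epsilon$ and $G_n$ depends only on $(X,\beta^*,\sigma^2)$; running the bootstrap procedure on $(X,\,X\tilde\beta+\epsilon^*)$ produces $\hat S^*$ and $T_n^*$ with conditional law $G_n^*$. Hence $d^2(G_n,G_n^*)\le E^*\|T_n^{\mathrm{a}}-T_n^*\|_2^2$, and it remains to control the right-hand side.

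First dispose of $\{\hat S\neq S\}$: $d^2(G_n,G_n^*)\,\mathbb{I}_{\hat S\neq S}=o_p(1)$, because $P(\hat S\neq S)\to0$ by Lemma~\ref{lemma: model selection} while $d(G_n,G_n^*)\le(E\|T_n\|_2^2)^{1/2}+(E^*\|T_n^*\|_2^2)^{1/2}=O_p(\sqrt s)$ by Corollary~\ref{corollary: asymptotic bias and variance} and Lemma~\ref{lemma:bootstrap bias}. So assume $\hat S=S$ and split $E^*\|T_n^{\mathrm{a}}-T_n^*\|_2^2$ over the ``oracle'' event $E=\{\hat S^{\mathrm{a}}=S\}\cap\{\hat S^*=\hat S\}$ and its complement. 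On $E$, since $\tau_n\propto1/n$ gives $\tau_n^2\le\Lambda_{min}\le\Lambda_{min}(C_{11})$ for $n$ large (assumptions (c),(e)), both post-selection estimators coincide with least squares on $X_S$: $\tilde\beta^{\mathrm{a}}_S-\beta^*_S=(X_S^TX_S)^{-1}X_S^T\xi$, and since $X\tilde\beta=X_S\tilde\beta_S$ on $\{\hat S=S\}$ also $\tilde\beta^*_S-\tilde\beta_S=(X_S^TX_S)^{-1}X_S^T\epsilon^*$, while both vectors vanish off $S$. Thus on $E$ we have $T_n^{\mathrm{a}}-T_n^*=\sqrt n(X_S^TX_S)^{-1}X_S^T(\xi-\epsilon^*)$; since the coupled differences are independent across $i$ with common conditional mean $0$ and second moment $d^2(F,\hat F_n)$,
\[
E^*\big[\|T_n^{\mathrm{a}}-T_n^*\|_2^2\,\mathbb{I}_E\big]\ \le\ d^2(F,\hat F_n)\,tr\left\{\big(\tfrac{1}{n}X_S^TX_S\big)^{-1}\right\}\ \le\ \frac{s}{\Lambda_{min}}\,d^2(F,\hat F_n),
\]
the last step because $(\tfrac{1}{n}X_S^TX_S)^{-1}$ is $s\times s$ with every eigenvalue at most $1/\Lambda_{min}$ by assumption (c).

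On $E^c$ I would use $\|T_n^{\mathrm{a}}-T_n^*\|_2^2\le2\|T_n^{\mathrm{a}}\|_2^2+2\|T_n^*\|_2^2$ and $\mathbb{I}_{E^c}\le\mathbb{I}_{\hat S^{\mathrm{a}}\neq S}+\mathbb{I}_{\hat S^*\neq\hat S}$, then apply Cauchy--Schwarz to each resulting term. The fourth moments $E\|T_n^{\mathrm{a}}\|_2^4$ and $E^*\|T_n^*\|_2^4$ are bounded by fixed polynomials in $n$ --- this is the computation behind $\eqref{eqn:beta4}$ in the proof of Theorem~\ref{theorem: asymptotic bias and variance}, with $\tau_n\propto1/n$ and $\tfrac1n\|X\beta^*\|_2^2=O(n)$ from assumption (g), and with Lemma~\ref{lemma:subgaussian} keeping $\sigma_*$ a constant for the bootstrap version --- while $P(\hat S^{\mathrm{a}}\neq S)=P(\hat S\neq S)=o(e^{-n^{c_2}})$ (Lemma~\ref{lemma: model selection}) and $P^*(\hat S^*\neq\hat S)=o_p(e^{-n^{c_2}})$ (assumption (f), Lemma~\ref{lemma:bootstrap selection consistency}); since $e^{-n^{c_2}/2}$ beats any polynomial, each of these contributions is $o_p(1)$, and Lemma~\ref{lemma:bootstrap bias} even gives the sharper $o_p(e^{-n^{c_2}/4})$ bound on the $\mathbb{I}_{\hat S^*\neq\hat S}$ piece. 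Collecting the pieces yields $d^2(G_n,G_n^*)\le tr\{(\tfrac1nX_S^TX_S)^{-1}\}\,d^2(F,\hat F_n)+o_p(1)$, which gives the asserted inequality (the argument in fact produces leading constant $1$, so the stated constant $4$ is not tight).

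The work here is bookkeeping rather than a single sharp inequality. The delicate points are (i) to use a \emph{fresh} i.i.d.\ $F$ sample $\xi$ for the $G_n$-leg, not the data residuals, so that $T_n^{\mathrm{a}}$ genuinely has law $G_n$; and (ii) to control the three exceptional events $\{\hat S\neq S\}$, $\{\hat S^{\mathrm{a}}\neq S\}$ and $\{\hat S^*\neq\hat S\}$ simultaneously. All the ``beta-min'' and scaling hypotheses (assumptions (dd),(h)--(j) and the Irrepresentable condition) enter only indirectly, through the already-proved facts that these probabilities are exponentially small (Lemmas~\ref{lemma: model selection},~\ref{lemma:bootstrap selection consistency}) and that the bootstrap residuals are uniformly sub-Gaussian (Lemma~\ref{lemma:subgaussian}); the rest is the $L^2$/fourth-moment accounting above, together with the computation of $tr\{(\tfrac1nX_S^TX_S)^{-1}\}$ and its bound $s/\Lambda_{min}$.
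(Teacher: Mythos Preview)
Your proposal is correct and follows essentially the same approach as the paper: construct an explicit coupling by drawing an optimal $(\xi_i,\epsilon_i^*)$ pair attaining $d^2(F,\hat F_n)$ (the paper does this too, but overloads the symbol $\epsilon$ for the fresh sample and the original errors), run the two procedures in parallel, and split over the ``oracle'' selection events for both legs. Your bookkeeping is in fact cleaner than the paper's --- by splitting directly over $E=\{\hat S^{\mathrm a}=S\}\cap\{\hat S^*=\hat S\}$ rather than using two successive triangle inequalities on indicator decompositions, you get leading constant~$1$ instead of~$4$; and your explicit handling of $\{\hat S\neq S\}$ via $P(\hat S\neq S)\to 0$ makes the conditioning transparent where the paper leaves it implicit in ``Fix $\omega\in A_n\cap\{\hat S=S\}$.''
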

\begin{proof} By assumption (f), there exists a set $A_n$ be such that $P(A_n)\rightarrow 1$ and for every $\omega \in A_n$,
\[ P^*(\hat S^* \neq \hat S) = o(e^{-n^{c_2}}). \]
Fix $\omega \in A_n \bigcap \{\hat S = S\}$. By definition of Mallows metric, we have
\begin{eqnarray}
 d^2(G_n,G_n^*) &=&   \mathop {inf}\limits_{\epsilon \sim F, \epsilon^* \sim  \hat F_n} E|| T_n -T_n^* ||_2^2 \nonumber \\
  &=& \mathop {inf}\limits_{\epsilon \sim F, \epsilon^* \sim  \hat F_n} E|| \sqrt{n}(\tilde \beta-\beta^*) - \sqrt{n}(\tilde \beta^*(\omega)-\tilde \beta(\omega)) ||_2^2. \nonumber
\end{eqnarray}
By Lemma 8.1 in \cite{BickelFreedman1981a}, the infimum in Mallows metric can be obtained. Then we can choose pairs $\{\epsilon_i\sim F, \epsilon_i^*\sim \hat F_n,i=1,...,n\}$ which are independent and $E(\epsilon_i - \epsilon_i^*)^2 = d^2(F,\hat F_n)$. Now, Let $A = \{\hat S = S\}$ and $A^* = \{\hat S^* = S\}$, straightforward computation and triangle inequality yield
\begin{eqnarray}
  &  & E|| \sqrt{n}(\tilde \beta-\beta^*) - \sqrt{n}(\tilde \beta^*(\omega)-\tilde \beta(\omega)) ||_2^2 \nonumber \\
 & = & E|| \sqrt{n}(\tilde \beta-\beta^*)\mathbb{I}_{A} +  \sqrt{n}(\tilde \beta-\beta^*)\mathbb{I}_{A^c}  - \sqrt{n}(\tilde \beta^*(\omega)-\tilde \beta(\omega)) \mathbb{I}_{A^*} \nonumber \\
 & & - \sqrt{n} (\beta^*(\omega)-\tilde \beta(\omega)) \mathbb{I}_{(A^*)^c} ||_2^2 \nonumber \\
 &\leq & 2E|| \sqrt{n}(\tilde \beta-\beta^*)\mathbb{I}_{A}- \sqrt{n}(\tilde \beta^*(\omega)-\tilde \beta(\omega)) \mathbb{I}_{A^*}||_2^2 + 4 E|| \sqrt{n}(\tilde \beta-\beta^*)\mathbb{I}_{A^c}||_2^2 \nonumber \\
 & &  +4 E^* || \sqrt{n} (\tilde \beta^*(\omega)-\tilde \beta(\omega)) \mathbb{I}_{(A^*)^c} ||_2^2 \nonumber \\
 &=& 2E|| \sqrt{n}(X_{S}^TX_{S})^{-1}X_{S}^T\epsilon \mathbb{I}_{A}- \sqrt{n}(X_{S}^TX_{S})^{-1}X_{S}^T\epsilon^* \mathbb{I}_{A^*}||_2^2  \nonumber \\
 & &  + 4 E|| \sqrt{n}(\tilde \beta-\beta^*)\mathbb{I}_{A^c}||_2^2 +4 E^* || \sqrt{n} (\tilde \beta^*(\omega)-\tilde \beta(\omega)) \mathbb{I}_{(A^*)^c} ||_2^2 \nonumber
\end{eqnarray}
where $E^*$ is the conditional expectation over $\epsilon^*$ given $\omega$. 

From the proof of Theorem~\ref{theorem: asymptotic bias and variance}, we have
\[ E|| \sqrt{n}(\tilde \beta-\beta^*)\mathbb{I}_{A^c}||_2^2 \leq 8n \sqrt{ P(\hat S \neq S) }  \{ ||\beta^*||_2^2 + \frac{1}{\tau_n^2} \frac{1}{n}||X\beta^*||_2^2  + \frac{1}{\tau_n^2} \sigma^2 \} \rightarrow 0. \]
By Lemma~\ref{lemma:bootstrap bias}, we have
\[ E^* || \sqrt{n} \beta^*(\omega)-\tilde \beta(\omega)) \mathbb{I}_{(A^*)^c} ||_2^2 = o_p(1), \]
therefore
\begin{eqnarray}
\label{eqn:e}
 &  & E|| \sqrt{n}(\tilde \beta-\beta^*) - \sqrt{n}(\tilde \beta^*(\omega)-\tilde \beta(\omega)) ||_2^2 \nonumber \\
 &\leq& 2E|| \sqrt{n}(X_{S}^TX_{S})^{-1}X_{S}^T\epsilon \mathbb{I}_{A}- \sqrt{n}(X_{S}^TX_{S})^{-1}X_{S}^T\epsilon^* \mathbb{I}_{A^*}||_2^2 + o_p(1)  \nonumber \\
 &\leq & 4 E|| \sqrt{n}(X_{S}^TX_{S})^{-1}X_{S}^T\epsilon - \sqrt{n}(X_{S}^TX_{S})^{-1}X_{S}^T\epsilon^* ||_2^2  \nonumber \\
 & & + 4 E|| \sqrt{n}(X_{S}^TX_{S})^{-1}X_{S}^T\epsilon \mathbb{I}_{A^c}- \sqrt{n}(X_{S}^TX_{S})^{-1}X_{S}^T\epsilon^* \mathbb{I}_{(A^*)^c}||_2^2 + o_p(1) \nonumber \\
 &\leq & 4 E|| \sqrt{n}(X_{S}^TX_{S})^{-1}X_{S}^T\epsilon - \sqrt{n}(X_{S}^TX_{S})^{-1}X_{S}^T\epsilon^* ||_2^2  \nonumber \\
 & & +  8 E|| \sqrt{n}(X_{S}^TX_{S})^{-1}X_{S}^T\epsilon \mathbb{I}_{A^c}||_2^2 + 8E^*||\sqrt{n}(X_{S}^TX_{S})^{-1}X_{S}^T\epsilon^* \mathbb{I}_{(A^*)^c}||_2^2 \nonumber \\
 & & + o_p(1).
\end{eqnarray}
Next, we will bound the first three parts in the last inequality respectively. By straightforward computation, we have
\begin{eqnarray}
\label{eqn:d}
 &  & E|| \sqrt{n}(X_{S}^TX_{S})^{-1}X_{S}^T\epsilon - \sqrt{n}(X_{S}^TX_{S})^{-1}X_{S}^T\epsilon^* ||_2^2 \nonumber \\
 &=&  Etr\left\{(\epsilon-\epsilon^*)^T(\sqrt{n}(X_{S}^TX_{S})^{-1}X_{S}^T)^T(\sqrt{n}(X_{S}^TX_{S})^{-1}X_{S}^T)(\epsilon-\epsilon^*)\right\} \nonumber \\
 &=&  tr\left\{(nX_{S}(X_{S}^TX_{S})^{-2}X_{S}^TE(\epsilon-\epsilon^*)(\epsilon-\epsilon^*)^T)\right\} \nonumber \\
 &=&  d^2(F,\hat F_n) tr\left\{nX_{S}(X_{S}^TX_{S})^{-2}X_{S}^T\right\} \nonumber \\
 &=&  d^2(F,\hat F_n) tr\left\{(\frac{1}{n}X_{S}^TX_{S})^{-1}\right\}.
\end{eqnarray}
The penultimate equality is because $E(\epsilon-\epsilon^*)(\epsilon-\epsilon^*)^T = d^2(F,\hat F_n)I$. Then we only need to show that
\begin{equation}
\label{eqn:b}
 E|| \sqrt{n}(X_{S}^TX_{S})^{-1}X_{S}^T\epsilon \mathbb{I}_{A^c}||_2^2 =o(1),
\end{equation}
\begin{equation}
\label{eqn:c}
E^*||\sqrt{n}(X_{S}^TX_{S})^{-1}X_{S}^T\epsilon^* \mathbb{I}_{(A^*)^c}||_2^2=o_p(1).
\end{equation}
It is easy to see that
\begin{eqnarray}
 || \sqrt{n}(X_{S}^TX_{S})^{-1}X_{S}^T\epsilon||_2^2 &=& n\epsilon^TX_{S}(X_{S}^TX_{S})^{-2}X_{S}^T\epsilon   \nonumber \\
 &\leq&  \Lambda_{max}(nX_{S}(X_{S}^TX_{S})^{-2}X_{S}^T) ||\epsilon||_2^2 \nonumber \\
 &\leq&  \Lambda_{max}((\frac{1}{n}X_{S}^TX_{S})^{-1}) ||\epsilon||_2^2 \nonumber \\
 &\leq&  \Lambda_{min}^{-1} ||\epsilon||_2^2. \nonumber
\end{eqnarray}
By Cauchy-Schwarz inequality,
\begin{eqnarray}
 && E|| \sqrt{n}(X_{S}^TX_{S})^{-1}X_{S}^T\epsilon \mathbb{I}_{A^c}||_2^2   \nonumber \\
 &\leq&  \sqrt{E|| \sqrt{n}(X_{S}^TX_{S})^{-1}X_{S}^T\epsilon||_2^4 E\mathbb{I}_{A^c}} \nonumber \\
 &\leq&  \sqrt{\Lambda_{min}^{-2}E||\epsilon||_2^4 P(A^c)  } \nonumber
\end{eqnarray}
In the proof of Theorem~\ref{theorem: asymptotic bias and variance}, we have shown that $E||\epsilon||_2^4=O(n^2)$ (see $\eqref{eqn:useinlemma3}$) and connect with $P(A^c)=P(\hat S \neq S) =o(e^{-n^{c_2}})$, we obtain $\eqref{eqn:b}$. The proof of $\eqref{eqn:c}$ is the same as $\eqref{eqn:b}$, so we omit it.

Combine $\eqref{eqn:d}$, $\eqref{eqn:b}$, $\eqref{eqn:c}$ and $\eqref{eqn:e}$, we have
\[ E|| \sqrt{n}(\tilde \beta-\beta^*) - \sqrt{n}(\tilde \beta^*(\omega)-\tilde \beta(\omega)) ||_2^2 \leq 4 d^2(F,\hat F_n) tr\left\{(\frac{1}{n}X_{S}^TX_{S})^{-1}\right\} + o_p(1). \]
Therefore, we can obtain Lemma~\ref{lemma:theorem31} by
\begin{eqnarray}
 d^2(G_n,G_n^*) &=& \mathop {inf}\limits_{\epsilon \sim F, \epsilon^* \sim  \hat F_n} E|| \sqrt{n}(\tilde \beta-\beta^*) - \sqrt{n}(\tilde \beta^*(\omega)-\tilde \beta(\omega)) ||_2^2 \nonumber \\
 &\leq& E|| \sqrt{n}(\tilde \beta-\beta^*) - \sqrt{n}(\tilde \beta^*(\omega)-\tilde \beta(\omega)) ||_2^2 \nonumber \\
 &\leq & 4d^2(F,\hat F_n) tr\left\{(\frac{1}{n}X_{S}^TX_{S})^{-1}\right\} + o_p(1) \nonumber \\
 &\leq & \frac{4s}{\Lambda_{min}} d^2(F,\hat F_n) +o_p(1) \nonumber
\end{eqnarray}
where the last inequality holds because $(\frac{1}{n}X_{S}^TX_{S})^{-1}$ is a $s$ by $s$ matrix and $\Lambda_{max}((\frac{1}{n}X_{S}^TX_{S})^{-1}) \leq \Lambda_{min}^{-1}$.
\end{proof}

In order to prove Theorem~\ref{theorem:bootstraplassovalid}, we only have to show that
\[ \frac{s}{\Lambda_{min}} d^2(F,\hat F_n)=o_p(1). \]

\begin{lemma}
\label{lemma:theorem32}
Suppose that assumptions (a)-(c), (e) and (dd) are satisfied and that $P(\hat S \neq S) \rightarrow 0$, then
\[ sd^2(\tilde F_n,F_n)=o_p(1). \]
\end{lemma}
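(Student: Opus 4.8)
The plan is to dominate the Mallows distance between the two empirical distributions by the ``same-index'' coupling, rewrite the resulting quantity as the rescaled fitted-value error $\tfrac1n\|X\tilde\beta-X\beta^*\|_2^2$, and then control this error by conditioning on whether the selection step recovered $S$. First I would note that, since $F_n$ and $\tilde F_n$ assign equal mass $1/n$ to the atoms $\{\epsilon_i\}_{i=1}^n$ and $\{\hat\epsilon_i\}_{i=1}^n$ respectively, one may draw a single index $I$ uniformly on $\{1,\dots,n\}$ and set $Z=\epsilon_I\sim F_n$, $W=\hat\epsilon_I\sim\tilde F_n$; this is an admissible coupling, so by the definition of the Mallows metric \eqref{eqn:mallow},
\[ d^2(\tilde F_n,F_n)\ \le\ \frac1n\sum_{i=1}^n(\hat\epsilon_i-\epsilon_i)^2, \]
and this bound holds for every realization of the data (even though $F_n$ is itself a random distribution).

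Next I would identify the right-hand side. Since $\hat\epsilon_i=y_i-x_i^T\tilde\beta=x_i^T\beta^*+\epsilon_i-x_i^T\tilde\beta$, we have $\hat\epsilon_i-\epsilon_i=x_i^T(\beta^*-\tilde\beta)$, hence $\tfrac1n\sum_i(\hat\epsilon_i-\epsilon_i)^2=\tfrac1n\|X\tilde\beta-X\beta^*\|_2^2$ and therefore $s\,d^2(\tilde F_n,F_n)\le\tfrac{s}{n}\|X\tilde\beta-X\beta^*\|_2^2$. I would then split this quantity according to the event $\{\hat S=S\}$. On $\{\hat S=S\}$, for $n$ large enough $\tau_n^2\le\Lambda_{min}\le\Lambda_{min}(C_{11})$ by assumptions (c) and (e), so $\tilde\beta_S=(X_S^TX_S)^{-1}X_S^TY$ and $X\tilde\beta-X\beta^*=X_S(X_S^TX_S)^{-1}X_S^T\epsilon$ is the orthogonal projection of $\epsilon$ onto the $s$-dimensional column space of $X_S$; taking expectations,
\[ E\Big[\tfrac{s}{n}\|X\tilde\beta-X\beta^*\|_2^2\,\mathbb{I}_{\hat S=S}\Big]\ \le\ \tfrac{s}{n}\,\sigma^2\,tr\big(X_S(X_S^TX_S)^{-1}X_S^T\big)\ =\ \tfrac{s^2\sigma^2}{n}\ \longrightarrow\ 0 \]
by assumption (dd), so Markov's inequality makes this piece $o_p(1)$. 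For the complementary piece, nonnegativity and the inclusion $\{\tfrac{s}{n}\|X\tilde\beta-X\beta^*\|_2^2\mathbb{I}_{\hat S\neq S}>\delta\}\subseteq\{\hat S\neq S\}$ give, for every $\delta>0$, a probability at most $P(\hat S\neq S)\to0$, so it is $o_p(1)$ as well; adding the two yields $s\,d^2(\tilde F_n,F_n)=o_p(1)$.

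The hard part is essentially a bookkeeping point rather than a real obstacle: on $\{\hat S=S\}$ the fitted-value error is a projection of $\epsilon$ onto an $s$-dimensional subspace, so $\tfrac1n\|X\tilde\beta-X\beta^*\|_2^2$ is only $O_p(s/n)$ (not $o_p(1/n)$), and multiplying by the extra factor $s$ coming from the dimension of the support of $\tilde F_n$ produces a bound of order $s^2/n$ in expectation; it is exactly assumption (dd), $s^2/n\to0$, that makes this vanish, which is why (dd) rather than the weaker (d) is needed here. On the event $\{\hat S\neq S\}$ one should resist the temptation to bound the (potentially large) fitted-value error directly and instead use only that this event has vanishing probability.
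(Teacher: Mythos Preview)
Your proof is correct and follows essentially the same approach as the paper: bound $d^2(\tilde F_n,F_n)$ by the same-index coupling $\tfrac1n\sum_i(\hat\epsilon_i-\epsilon_i)^2=\tfrac1n\|X(\beta^*-\tilde\beta)\|_2^2$, then on $\{\hat S=S\}$ compute $E\big[\tfrac{s}{n}\|X_S(X_S^TX_S)^{-1}X_S^T\epsilon\|_2^2\big]=\tfrac{s^2\sigma^2}{n}\to0$ by (dd). The paper's version leaves the complementary event $\{\hat S\neq S\}$ implicit (relying on $P(\hat S=S)\to1$), whereas you make the split and the inclusion argument explicit; this is a minor presentational difference, not a substantive one.
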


\begin{proof} By definition,
\[ d^2(\tilde F_n,F_n) \leq \frac{1}{n} \sum\limits_{i=1}^{n} {(\hat \epsilon_i-\epsilon_i)^2} = \frac{1}{n} ||\hat \epsilon- \epsilon||_2^2. \]
Since $\hat \epsilon = Y-X \tilde \beta$ and $\epsilon = Y-X\beta^*$, we have
\[ \hat \epsilon- \epsilon = X(\beta^*-\tilde \beta). \]
Conditioned on $\{\hat S =S\}$,
\[ \frac{s}{n}||\hat \epsilon- \epsilon||_2^2 = \frac{s}{n}||X_S(X_{S}^TX_{S})^{-1}X_{S}^T\epsilon||_2^2 \rightarrow_p 0 \]
because
\[ E\frac{s}{n}||X_S(X_{S}^TX_{S})^{-1}X_{S}^T\epsilon||_2^2 =  \frac{s}{n} tr\left\{X_S(X_{S}^TX_{S})^{-1}X_{S}^T\right\}\sigma^2= \frac{s^2}{n}\sigma^2 \rightarrow 0. \]
Therefore,
\[ sd^2(\tilde F_n,F_n)=o_p(1). \]
\end{proof}

\begin{lemma}
\label{lemma:theorem33}
Suppose that assumptions (a)-(c), (e) and (dd) are satisfied and that $P(\hat S \neq S) \rightarrow 0$, then
\[ s d^2(\hat F_n,F_n) =  o_p(1). \]
\end{lemma}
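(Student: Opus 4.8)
The plan is to pass from $s\,d^2(\hat F_n, F_n)$ to $s\,d^2(\tilde F_n, F_n)$, which is already $o_p(1)$ by Lemma~\ref{lemma:theorem32}, by absorbing the effect of recentering the residuals at their mean $\hat\mu$. Since $\hat F_n$ is simply $\tilde F_n$ translated by the constant $-\hat\mu$, the triangle inequality for the Mallows metric gives $d(\hat F_n, F_n)\le d(\hat F_n, \tilde F_n)+d(\tilde F_n, F_n)$, hence
\[
 s\,d^2(\hat F_n, F_n)\le 2s\,d^2(\hat F_n, \tilde F_n)+2s\,d^2(\tilde F_n, F_n).
\]
The last term is $o_p(1)$ by Lemma~\ref{lemma:theorem32}, so everything reduces to showing $s\,d^2(\hat F_n, \tilde F_n)=o_p(1)$.

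For this I would use the obvious shift coupling: taking $W=\hat\epsilon_I$ with $I$ uniform on $\{1,\dots,n\}$ and $Z=W-\hat\mu$ gives $W\sim\tilde F_n$, $Z\sim\hat F_n$, and $E\|Z-W\|_2^2=\hat\mu^2$, so $d^2(\hat F_n,\tilde F_n)\le\hat\mu^2$. The key observation is then that $\hat\mu$ does not actually depend on $\tilde\beta$ or on the selection event: writing $\hat\mu=\frac1n\sum_{i=1}^n\hat\epsilon_i=\overline{y}-\overline{x}^T\tilde\beta$ and substituting $y_i=x_i^T\beta^*+\epsilon_i$, the standardization assumption (b) forces $\overline{x}=\frac1n\sum_{i=1}^n x_i=0$, so $\overline{y}=\overline{x}^T\beta^*+\overline{\epsilon}=\overline{\epsilon}$ and $\overline{x}^T\tilde\beta=0$; hence $\hat\mu=\overline{\epsilon}$ identically, and $s\,d^2(\hat F_n,\tilde F_n)\le s\,\overline{\epsilon}^{\,2}$.

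It then remains to bound $s\,\overline{\epsilon}^{\,2}$. Under the Gaussian assumption (a), $E\,\overline{\epsilon}^{\,2}=\sigma^2/n$, so $E(s\,\overline{\epsilon}^{\,2})=s\sigma^2/n\to 0$ (this already follows from $s=O(n^{c_1})$ with $c_1<1$, and a fortiori from assumption (dd)), and Markov's inequality yields $s\,\overline{\epsilon}^{\,2}=o_p(1)$. Combining the three displays gives $s\,d^2(\hat F_n, F_n)=o_p(1)$, the assertion of the lemma; together with Lemma~\ref{lemma:theorem31}, Lemma~\ref{lemma:theorem32}, and the (standard) control of $s\,d^2(F_n,F)$ this closes the estimate $\frac{s}{\Lambda_{min}}\,d^2(F,\hat F_n)=o_p(1)$ required in the proof of Theorem~\ref{theorem:bootstraplassovalid}. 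I do not expect a genuine obstacle here: the only point needing care is the decoupling of the recentering constant $\hat\mu$ from the random event $\{\hat S=S\}$, and this is exactly what $\overline{x}=0$ delivers — without it one would have to condition on $\{\hat S=S\}$ and control $\overline{x}_S^T(X_S^TX_S)^{-1}X_S^T\epsilon$ instead, as is done for the analogous term in the proof of Lemma~\ref{lemma:subgaussian}.
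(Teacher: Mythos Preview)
Your argument is correct. It differs from the paper's in one structural respect: the paper does not use the triangle inequality and a shift coupling, but instead invokes Lemma~8.8 of Bickel--Freedman, which gives the exact identity $d^2(U,V)=d^2(U-EU,\,V-EV)+\|EU-EV\|^2$. Applying this once to $(\hat F_n,F_n)$ and once to $(\tilde F_n,F_n)$, and subtracting, they obtain
\[
 d^2(\hat F_n,F_n)=d^2(\tilde F_n,F_n)+\overline{\epsilon}^{\,2}-\Bigl(\tfrac1n\sum_i(\hat\epsilon_i-\epsilon_i)\Bigr)^{2}\;\le\;d^2(\tilde F_n,F_n)+\overline{\epsilon}^{\,2},
\]
after which the conclusion follows from Lemma~\ref{lemma:theorem32} and $sE\overline{\epsilon}^{\,2}=s\sigma^2/n\to 0$, exactly as in your last step. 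Your route is more elementary (no external lemma, only the triangle inequality and the obvious coupling), and your observation that assumption~(b) forces $\hat\mu=\overline{\epsilon}$ identically is a genuine simplification the paper does not exploit: with it, the subtracted term $(\hat\mu-\overline{\epsilon})^2$ in the paper's display is actually zero, so their inequality is in fact an equality, and your shift-coupling bound $d^2(\hat F_n,\tilde F_n)\le\hat\mu^2=\overline{\epsilon}^{\,2}$ is sharp. Either approach closes the lemma without difficulty.
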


\begin{proof} Application of Lemma 8.8 in \cite{BickelFreedman1981a}, shows that for random variables $U$ and $V$ with finite second moment,
\[ d^2(U,V) = d^2(U-EU,V-EV)+||EU-EV||_2^2. \]
Therefore, if let $\overline{F_n}$ be the empirical distribution of $\epsilon_1,...,\epsilon_n$ centered at its mean $\overline{\epsilon} =\frac{1}{n} \sum\limits_{i=1}^{n} \epsilon_i$, we have
\[ d^2(\hat F_n,F_n) = d^2(\hat F_n,\overline{F_n}) +  ( \frac{1}{n}  \sum\limits_{i=1}^{n} \epsilon_i)^2, \]
\[ d^2(\tilde F_n,F_n) =  d^2(\hat F_n,\overline{F_n}) + ( \frac{1}{n} \sum\limits_{i=1}^{n} \epsilon_i- \frac{1}{n} \sum\limits_{i=1}^{n} \hat \epsilon_i)^2. \]
Connecting the above two equalities,
\begin{eqnarray}
d^2(\hat F_n,F_n) &=& d^2(\tilde F_n,F_n)  + (\frac{1}{n} \sum\limits_{i=1}^{n} {\epsilon_i})^2 - (\frac{1}{n} \sum\limits_{i=1}^{n} {\{\hat \epsilon_i-\epsilon_i\}})^2 \nonumber \\
&\leq & d^2(\tilde F_n,F_n)  + (\frac{1}{n} \sum\limits_{i=1}^{n} {\epsilon_i})^2. \nonumber
\end{eqnarray}
Now use the fact $ sE (\frac{1}{n} \sum\limits_{i=1}^{n} {\epsilon_i})^2 = \frac{s\sigma^2}{n} \rightarrow 0$ and the previous Lemma~\ref{lemma:theorem32}, we obtain Lemma~\ref{lemma:theorem33}.
\end{proof}

Since $d$ is a metric,
\begin{equation}
\label{eqn:metric}
\frac{1}{2} d^2(F,\hat F_n) \leq d^2(F,F_n) + d^2(F_n,\hat F_n).
\end{equation}
We still need to bound $d^2(F,F_n)$. Denote $\phi$ and $\Phi$ the density and distribution functions of standard normal distribution $N(0,1)$ respectively. To control $d^2(F,F_n)$, we use the following result obtained by \cite{delBarrio2000}: let $\rightarrow_\omega$ denote weak convergence,
\begin{lemma}[del Barrio et al. (2000)]
\label{lemma:theorem34}
Let $\epsilon_i,i=1,..,n$ be a sequence of i.i.d. normal random variables with mean 0 and variance $\sigma^2$. Then
\[ n(\frac{d^2(F,F_n)}{\sigma^2}-a_n) \rightarrow_\omega -\frac{3}{2} + \sum\limits_{j=3}^{\infty} \frac{Z_j^2-1}{j} \]
where $\{ Z_j,j=3,..,\infty \}$ are a sequence of independent $N(0,1)$ random variables and
\[ a_n = \frac{1}{n} \int_{\frac{1}{n+1}}^{\frac{n}{n+1}} \frac {t(1-t)}{[\phi(\Phi^{-1}(t))]^2} dt. \]
\end{lemma}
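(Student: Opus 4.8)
The plan is to reduce $d^2(F,F_n)$ to a weighted integral of the uniform empirical quantile process and then take the limit via a strong approximation by a Brownian bridge, identifying the limiting law by a Hermite expansion. First I would use the one-dimensional quantile (Wasserstein) representation of the Mallows metric, $d^2(F,F_n)=\int_0^1\bigl(F^{-1}(t)-F_n^{-1}(t)\bigr)^2\,dt$, and rescale so that $\sigma=1$. By the probability integral transform we may take $\epsilon_i=\Phi^{-1}(U_i)$ with $U_i$ i.i.d.\ uniform, so $F_n^{-1}(t)=\Phi^{-1}(G_n^{-1}(t))$, where $G_n^{-1}$ is the uniform empirical quantile function. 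Writing $\rho_n(t)=\sqrt n\,(G_n^{-1}(t)-t)$ and $L(t):=\phi(\Phi^{-1}(t))$, and using $(\Phi^{-1})'(t)=1/L(t)$, a mean-value expansion gives $\sqrt n\bigl(\Phi^{-1}(G_n^{-1}(t))-\Phi^{-1}(t)\bigr)=\rho_n(t)/L(\theta_n(t))$ with $\theta_n(t)$ between $t$ and $G_n^{-1}(t)$, so that $n\,d^2(F,F_n)=\int_0^1 \rho_n(t)^2/L(t)^2\,dt$ up to a linearization remainder. The centering $a_n$ is exactly $n^{-1}$ times the expectation of the truncated main term, since for uniform order statistics $\mathrm{Var}\bigl(\sqrt n(U_{(k)}-t)\bigr)=\tfrac{n}{n+2}\,t(1-t)\to t(1-t)$.

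Next I would center and truncate: write $n\bigl(d^2(F,F_n)/\sigma^2-a_n\bigr)=\int_{1/(n+1)}^{n/(n+1)}\frac{\rho_n(t)^2-t(1-t)}{L(t)^2}\,dt+R_n$, where $R_n$ collects the contribution of $[0,\tfrac1{n+1}]\cup[\tfrac n{n+1},1]$ and the Taylor remainder of $\Phi^{-1}$. To pass the centered integral to the limit I would invoke a \emph{weighted} strong approximation of the uniform quantile process $\rho_n$ by a sequence of Brownian bridges $B_n$ (in the spirit of Cs{\"o}rg{\H o} and Horv{\'a}th), under which $\sup_{1/(n+1)\le t\le n/(n+1)}(t(1-t))^{-1/2+\nu}\,|\rho_n(t)-B_n(t)|$ is small for a small $\nu>0$. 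The delicate point is that $1/L(t)^2$ blows up like $\bigl(t^2\log(1/t)\bigr)^{-1}$ as $t\to0$, so the replacement of $\rho_n^2$ by $B_n^2$, the negligibility of $R_n$, and the matching of $a_n$ to the truncated mean must all be carried out with sharp boundary estimates on $\rho_n$ (Chibisov--O'Reilly / law-of-the-iterated-logarithm type) rather than with crude bounds. Granting this, the leading part of the centered quantity converges in law to $\int_0^1\frac{B(t)^2-t(1-t)}{L(t)^2}\,dt$ for a standard Brownian bridge $B$, which is a well-defined random variable (its variance $2\iint (s\wedge t-st)^2 L(s)^{-2}L(t)^{-2}\,ds\,dt$ is finite although the uncentered integral is a.s.\ infinite).

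Finally I would identify the limit by a Hermite/Karhunen--Loève expansion. Representing $B(t)=\int_0^1(\mathbf 1\{u\le t\}-u)\,dW(u)$ and letting $H_k$ be the probabilists' Hermite polynomials, the two identities $L'(t)=-\Phi^{-1}(t)$ and $\frac{d}{dt}\bigl[L(t)H_k(\Phi^{-1}(t))\bigr]=-H_{k+1}(\Phi^{-1}(t))$, together with the recurrence $xH_{k-1}(x)=H_k(x)+(k-1)H_{k-2}(x)$, let me evaluate the coefficients of $B/L$ in the orthonormal basis $\{H_m(\Phi^{-1}(\cdot))/\sqrt{m!}\}$ of $L^2[0,1]$: two integrations by parts (the boundary terms vanishing because $B(0)=B(1)=0$ dominates the Hermite growth) collapse the inner product to $\int_0^1\frac{B(t)}{L(t)}H_m(\Phi^{-1}(t))\,dt=-\frac1{m+1}\int_0^1 H_{m+1}(\Phi^{-1}(t))\,dW(t)$, whence $\int_0^1(B/L)^2\,dt=\sum_{j\ge1}Z_j^2/j$ with $Z_j$ i.i.d.\ $N(0,1)$, and the centered version equals $\sum_{j\ge1}(Z_j^2-1)/j$. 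The main obstacle is to reconcile this formal leading-order limit with the stated one, $-\tfrac32+\sum_{j\ge3}(Z_j^2-1)/j$: the two differ by $Z_1^2+\tfrac12 Z_2^2$, i.e.\ by the $H_1(\Phi^{-1})$ and $H_2(\Phi^{-1})$ (``mean'' and ``scale'') modes, and one must show that the second-order Taylor term in the expansion of $\Phi^{-1}$ and the endpoint contributions $R_n$ — driven by the extreme uniform order statistics $U_{(1)},U_{(n)}$ and weighted by the singular factors $\Phi^{-1}/L^3$, $(\Phi^{-1})^2/L^4$ — are not asymptotically negligible but precisely replace those two random summands by the deterministic amount $-1-\tfrac12=-\tfrac32$. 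Establishing that exact cancellation, together with the integrability bookkeeping on the truncated window that makes $R_n$ and the bridge-approximation errors vanish, is where the real work lies.
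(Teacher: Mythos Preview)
The paper does not prove this lemma at all: it is quoted verbatim as a result of del~Barrio, Cuesta-Albertos and Matr\'an (2000) and is used only to conclude that $d^2(F,F_n)=O_p(n^{-1})+\sigma^2 n^{-1}\log\log n=o_p(1/s)$, with the asymptotics of $a_n$ taken from Bickel and van~Zwet. So there is nothing to compare against in this paper; your sketch is an attempt to reconstruct the external reference, not an alternative to anything the authors wrote.

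That said, your outline is essentially the del~Barrio et~al.\ program: quantile representation of the $L^2$-Wasserstein distance, reduction to a weighted $L^2$ functional of the uniform quantile process, weighted Hungarian-type approximation by a Brownian bridge, and diagonalization of $\int_0^1 B(t)^2/\phi(\Phi^{-1}(t))^2\,dt$ via the Hermite basis. You have also put your finger on the genuinely hard step, namely that the naive limit $\sum_{j\ge 1}(Z_j^2-1)/j$ must be reconciled with the stated $-\tfrac32+\sum_{j\ge 3}(Z_j^2-1)/j$. Your proposed mechanism (that second-order Taylor terms and extreme-order-statistic contributions ``deterministically replace'' the $j=1,2$ modes) is not quite how del~Barrio et~al.\ obtain the shift; in their work the $-\tfrac32$ and the truncation to $j\ge 3$ arise from \emph{standardizing} by the sample mean and sample variance, which kills the $H_1$ and $H_2$ components of $B/L$ and contributes $-1-\tfrac12$. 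Whether the lemma as stated here (with the raw, unstandardized $F_n$) really yields that limiting constant, or whether the authors have transcribed the standardized version of the theorem, is worth checking against the original 1999/2000 papers before you invest in the boundary analysis you describe. For the purposes of the present paper none of this matters: only the order $d^2(F,F_n)=O_p(n^{-1}\log\log n)$ is used.
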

In fact we have (see \cite{Bickelvan1978})
\[ \int_{\frac{1}{n}}^{1-\frac{1}{n}} \frac {t(1-t)}{[\phi(\Phi^{-1}(t))]^2} d_t = \log \log n + \log 2 + \gamma + o(1) \]
where $\gamma = \lim_{k\rightarrow \infty} ( \sum\limits_{i=1}^{n} j^{-1} -\log k )$ is Euler's constant. Then, we have
\[ d^2(F,F_n) = O_p(\frac{1}{n})+ \sigma^2 \frac{\log\log n}{n} = o_p(\frac{1}{s}), \]
which together with Lemma~\ref{lemma:theorem31}, Lemma~\ref{lemma:theorem33} and inequality $\eqref{eqn:metric}$ complete the proof.

\end{proof}
\ \\

\begin{proof}[Proof of Lemma~\ref{lemma:bootstrap selection consistency}] Let $\hat F_n$ be the empirical distribution of the centered residuals $\hat \epsilon_1-\hat \mu,...,\hat \epsilon_n -\hat \mu$. For the stared data $(X,Y^*)$, we have
\[ Y^*=X\tilde \beta + \epsilon^*, \ \epsilon_i^* \sim \hat F_n  \ i.i.d.. \]

We only need to verify: the stared version (replacing $\beta^*$ and $\epsilon$ by $\tilde \beta$ and $\epsilon^*$ respectively) of conditions (a)-(d), (i), (j) and the Irrepresentable Condition $\eqref{eqn:IC}$ hold in probability. Given $\{\hat S =S\}$, these conditions have the following forms:

$(Irrepresentable\ Condition^*)$: there exists a positive constant vector $\eta$, such that
\begin{equation}
 |C_{21}C_{11}^{-1}sign(\tilde \beta_S)|\leq \mathbf{1}-\eta.
\end{equation}

$(a^*)$\footnote{By Remark 2.1, subgaussian assumption of $\epsilon_i^*$ ensures model selection consistency of Lasso}: $\epsilon_i^*$ are i.i.d. subgaussian random variables. That is, there exists constant $C^*$, $c^*>0$ such that
\[ P^*(|\epsilon_i^*|\geq t)\leq C^* e^{-c^*t^2}, \ \forall t\geq 0. \]

$(b^*)$: Suppose that the predictors are standardized, i.e.
\begin{equation}
 \sum\limits_{i=1}^{n} {x_{ij}} = 0 \ and \ \frac{1}{n} \sum\limits_{i=1}^{n} {x_{ij}^2}=1, \ j=1,...,p.
 \label{eqn:standardized}
\end{equation}

$(c^*)$: there exists an constant $\Lambda_{min}>0$ such that
\begin{equation}
 \Lambda_{min} (C_{11})  \geq \Lambda_{min}.
\end{equation}

$(d^*)$:  let $s^*=|\hat S|$, there exists $0\leq c_1<1$ and $0<c_2<1-c_1$
\begin{equation}
 s^*=s^*_n=O(n^{c_1}) \ , \ p=p_n = O(e^{n^{c_2}}).
\end{equation}

$(i^*)$: there exists constant $c_1+c_2<c_3\leq 1$ and $M>0$ so that
\begin{equation}
 n^{\frac{1-c_3}{2}} \mathop {min}\limits_{1\leq i \leq s} |\tilde \beta_i| \geq M.
\end{equation}

$(j^*)$: $\lambda_n \propto n^{\frac{1+c_4}{2}}$ with $c_2<c_4<c_3-c_1$.
\ \\

Clearly, conditions $(b^*)$, $(c^*)$ and $(j^*)$ hold because they are not relative to $\tilde \beta$ and $\epsilon^*$. By Lemma~\ref{lemma:subgaussian}, condition (a*) holds. $(d^*)$ is satisfied since $s^*=|\hat S| =s$. By Corollary~\ref{corollary: asymptotic normality} (asymptotic normality), $\tilde \beta$ is $\sqrt{n}$-consistent, then condition $(i^*)$ holds in probability. The sign-consistency of $\tilde \beta$ (Lemma~\ref{lemma: model selection}) ensures condition $(Irrepresentable\ Condition^*)$.

\end{proof}


\begin{thebibliography}{30}

\bibitem{Adel2013}
\textsc{Adel, J. and Andrea, M.} (2013).
Model Selection for High-Dimensional Regression under the Generalized Irrepresentability Condition. \textit{http://arxiv.org/abs/1305.0355}.


\bibitem{Bach2008}
\textsc{Bach, F.} (2008).
Bolasso: Model consistent lasso estimation through the bootstrap. \textit{In Proc. 25th Int. Conf. Machine Learning}, 33-40.


\bibitem{Belloni2009}
\textsc{Belloni, A. and Chernozhukov, V.} (2009).
Least Squares After Model Selection in High-dimensional Sparse Models. \textit{Bernoulli}
\textbf{19}, 521-547.

\bibitem{Belloni2011a}
\textsc{Belloni, A., Chernozhukov, V. and Hansen, C.} (2011).
Inference for high-dimensional sparse econometric models. \textit{http://arxiv.org/abs/1201.0220}.

\bibitem{Belloni2011b}
\textsc{Belloni, A., Chernozhukov, V. and Hansen, C.} (2011).
Inference on treatment effects after selection amongst high-dimensional controls. \textit{http://arxiv.org/abs/1201.0224}.


\bibitem{Bickelvan1978}
\textsc{Bickel, P. J. and van Zwet, W.R. } (1978).
Asymptotic expansions for the power of distribution free tests in the two-sample problem. \textit{Annals of Statistics}
\textbf{6}, 937-1004.
\MR{0499567 (80j:62043)}

\bibitem{BickelFreedman1981a}
\textsc{Bickel, P. J. and Freedman, D. A.} (1981).
Some asymptotic theory for the bootstrap. \textit{Annals of Statistics}
\textbf{9}, 1196-1217.
\MR{0630103}

\bibitem{BickelFreedman1983}
\textsc{Bickel, P. J. and Freedman, D. A.} (1983).
Bootstrapping regression models with many parameters. In \textit{Festschrift for Erich L. Lehmann} (P. Bickel, K. Doksum, and J. Hodges, Jr., eds.) 28-48. Wadsworth, Belmont, Calif.



\bibitem{BickelRitov2009}
\textsc{Bickel, P. J., Ritov, Y. and Tsybakov A.} (2009).
Simultaneous analysis of Lasso and Dantzig selector. \textit{Annals of Statistics}
\textbf{37}, 1705-1732.
\MR{2533469 (2010j:62118)}

\bibitem{Bunea2008}
\textsc{Bunea, F.} (2008).
Honest variable selection in linear and logistic regression models via $l_1$ and $l_1+l_2$ penalization. \textit{Electronic Journal of Statistics}
\textbf{2}, 1153-1194.
\MR{2461898 (2010b:62143)}

\bibitem{Bunea2009}
\textsc{Bunea, F., Tsybakov A. and Wegkamp, M.} (2006).
Sparsity oracle inequalities for the Lasso. \textit{Electronic Journal of Statistics}
\textbf{1}, 169-194.
\MR{2312149 (2008h:62101)}


\bibitem{CandesTao2007}
\textsc{Candes, E. and Tao, T.} (2007).
The Dantzig selector: statistical estimation when p is much larger than n. \textit{Annals of Statistics}
\textbf{35}, 2312-2351.
\MR{2382651}

\bibitem{Chatterjee2011}
\textsc{Chatterjee, A. and Lahiri, S. N.} (2011).
Bootstrapping Lasso estimators. \textit{Journal of the American Statistical Association}
\textbf{106}, 608-625.
\MR{2847974 (2012i:62199)}

\bibitem{Chatterjee2012}
\textsc{Chatterjee, A. and Lahiri, S. N.} (2012).
Rates of convergence of the adaptive Lasso estimators to the oracle distribution and higher order refinements by the bootstrap. \textit{Annals of Statistics} (to appear).


\bibitem{DavisonHinkley1997}
\textsc{Davison, A. C. and Hinkley, D. V.} (1997). \textit{Bootstrap methods and their application}, Cambridge Series in Statistical and Probabilistic Mathematics. Cambridge University Press.
\MR{1700749}


\bibitem{delBarrio2000}
\textsc{del Barrio, E., Cuesta-Albertos, J. and Matran, C.} (2000).
Contributions of empirical and quantile processes to the asymptotic theory of goodness-of-fit tests. \textit{Test}
\textbf{9}, 1-96.
\MR{1790430 (2001h:62026)}


\bibitem{DonohoElad2000}
\textsc{Donoho, D., Elad, M. and Temlyakov, V.} (2006).
Stable recovery of sparse overcomplete representations in the presence of noise. \textit{IEEE Transactions on Information Theory}
\textbf{52}, 6-18.
\MR{2237332}



\bibitem{Efron1979}
\textsc{Efron, B.} (1979).
Bootstrap methods: another look at the jackknife. \textit{Annals of Statistics}
\textbf{7}, 1-26.
\MR{0515681 (80b:62021)}


\bibitem{EfronHastieTibshirani2004}
\textsc{Efron, B., Hastie, T. and Tibshirani, R.} (2004).
Least angle regression. \textit{Annals of Statistics}
\textbf{32}, 407-499.
\MR{2060166}


\bibitem{EfronTibshirani1993}
\textsc{Efron, B. and Tibshirani, R.} (1993). \textit{An Introduction to the Bootstrap}, Boca Raton, FL: Chapman \& Hall/CRC.


\bibitem{FanLi2001}
\textsc{Fan, J. and Li, R.} (2001).
Variable selection via nonconcave penalized likelihood and its oracle properties. \textit{Journal of the American Statistical Association}
\textbf{96}, 1348-1360.
\MR{1946581 (2003k:62160)}

\bibitem{FanLv2008}
\textsc{Fan, J. and J. Lv} (2008).
Sure independence screening for ultra-high dimensional feature space. \textit{Journal of the Royal Statistical Society, Series B}
\textbf{70}, 849-911.


\bibitem{Freedman1981}
\textsc{Freedman, D. A.} (1981).
Bootstrapping regression models. \textit{Annals of Statistics}
\textbf{9}, 1218-1228.
\MR{0630104}


\bibitem{FriedmanHastieHolfing2007}
\textsc{Friedman, J., Hastie, T., Holfing, H., and Tibshirani, R.} (2007).
Pathwise coordinate optimization. \textit{The Annals of Applied Statistics}
\textbf{1}, 302-332.
\MR{2415737}


\bibitem{Fuchs2005}
\textsc{Fuchs, J. J.} (2005).
Recovery of exact sparse representations in the presence of noise. \textit{IEEE Transactions on Information Theory}
\textbf{51}, 3601-3608.


\bibitem{Dantiz2013}
\textsc{Gai, Y., Zhu, L. and Lin, L.} (2013).
Model selection consistency of Dantzig selector. \textit{Statistica Sinica}
\textbf{23}, 615-634.


\bibitem{Greenshtein2004}
\textsc{Greenshtein, E., and Ritov, Y.} (2004).
Persistence in high-dimensional linear predictor selection and the virtue of overparametrization. \textit{Bernoulli}
\textbf{10}, 971-988.
\MR{2108039}


\bibitem{Hoerl1970}
\textsc{Hoerl, A. E. and Kennard, R. W.} (1970).
Ridge regression: Biased estimation for nonorthogonal problems. \textit{Technometrics}
\textbf{12}, 55-67.


\bibitem{Huang2008}
\textsc{Huang, J., Horowitz, J. and Ma, S.} (2008).
Asymptotic properties of bridge estimators in sparse high-dimensional regression models. \textit{Annals of Statistics}
\textbf{36}, 587-613.
\MR{2396808 (2009g:62094)}


\bibitem{HuangZhang2008}
\textsc{Huang, J., S. Ma, and C.-H. Zhang} (2008).
Adaptive lasso for sparse high-dimensional regression models. \textit{Statistica Sinica}
\textbf{18}, 1603-1618.
\MR{2469326 (2010a:62214)}

\bibitem{Karim2008}
\textsc{Karim, L.} (2008).
Sup-norm convergence rate and sign concentration property of Lasso and Dantzig estimators. \textit{Electronic Journal of Statistics}
\textbf{2}, 90-102.
\MR{2386087 (2009a:62287)}


\bibitem{KnightFu2000}
\textsc{Knight, K. and Fu, W. J.} (2000).
Asymptotics for lasso-type estimators. \textit{Annals of Statistics}
\textbf{28}, 1356-1378.
\MR{1805787 (2002a:62099)}

\bibitem{Leeb2005}
\textsc{Leeb, H. and P$\ddot{o}$tscher, B. M.} (2005).
Model Selection and Inference: Facts and Fiction. \textit{Econometric Theory}
\textbf{21}, 21¨C59.
\MR{2153856}


\bibitem{SCAD2009}
\textsc{Lv, J. and Fan, Y.} (2009).
A unified approach to model selection and sparse recovery using regularized least squares. \textit{Annals of Statistics}
\textbf{37}, 3498¨C3528.
\MR{2549567 (2010m:62219)}

\bibitem{Massy1965}
\textsc{Massy, W. F.} (1965).
Principal components regression in exploratory statistical research. \textit{Journal of the American Statistical Association}
\textbf{60}, 234-256.


\bibitem{MeinshausenBuhlmann2006}
\textsc{Meinshausen, N. and Buhlmann, P.} (2006).
High dimensional graphs and variable selection with the lasso. \textit{Annals of Statistics}
\textbf{34}, 1436-1462.
\MR{2278363 (2008b:62044)}

\bibitem{Meinshausen2007}
\textsc{Meinshausen, N.} (2007).
Relaxed Lasso. \textit{Computational Statistics and Data Analysis}
\textbf{52}, 374-393.
\MR{2409990}

\bibitem{MeinshausenBuhlmann2010}
\textsc{Meinshausen, N. and Buhlmann, P.} (2010).
Stability selection. \textit{Journal of the Royal Statistical Society: Series B.}
\textbf{72}, 417-473.
\MR{2758523}



\bibitem{MeinshausenandYu2009}
\textsc{Meinshausen, N. and Yu, B.} (2009).
Lasso-type recovery of sparse representations for high-dimensional data. \textit{Annals of Statistics}
\textbf{37}, 246-270.
\MR{2488351 (2010e:62176)}



\bibitem{Minnier2009}
\textsc{Minnier, J., Tian, L. and Cai, T.} (2009).
A perturbation method for inference on regularized regression estimates. \textit{Journal of the American Statistical Association}
\textbf{106}(496), 1371�C1382.
\MR{2896842}



\bibitem{unifiedpaper2009}
\textsc{Negahban, S., Ravikumar, P., Wainwright, M. J. and Yu, B.} (2009).
A unified framework for high-dimensional analysis of $M$-estimators with decomposable regularizers. \textit{In Advances in Neural Information Processing Systems}
\textbf{22}, 1348-1356.

\bibitem{unifiedpaper2009journal}
\textsc{Negahban, S., Ravikumar, P., Wainwright, M. J. and Yu, B.} (2009).
A unified framework for high-dimensional analysis of $M$-estimators with decomposable regularizers. \textit{Statistical Science}
\textbf{28}, 538-557.



\bibitem{OsbornePresnell2000}
\textsc{Osborne, M. R., Presnell, B., and Turlach, B. A.} (2000).
On the lasso and its dual. \textit{Journal of Computational and Graphical Statistics}
\textbf{9}, 319-337.
\MR{1822089}


\bibitem{Potscher}
\textsc{P$\ddot{o}$tscher, B. M. and Schneider, U.} (2009).
On the distribution of the adaptive LASSO estimator. \textit{Journal of Statistical Planning and Inference}
\textbf{139}, 2775-2790.
\MR{2523666 (2010h:62084)}


\bibitem{Raskutti2011}
\textsc{Raskutti, G., Wainwright, M. J. and Yu, B.} (2011).
Minimax rates of estimation for high-dimensional linear regression over $l_q$-balls. \textit{IEEE Transactions on Information Theory}
\textbf{57}, 6976-6994.
\MR{2882274 (2012k:62204)}

\bibitem{Phdthesis}
\textsc{Sartori, S.} (2011).
\textit{Penalized regression: bootstrap confidence intervals and variable selection for high dimensional data sets.} PhD thesis, Universit¨¤ degli Studi di Milano, 2011. Available online: http://air.unimi.it/bitstream/2434/153099/6/phd\_unimi\_R07738.pdf.


\bibitem{Tibshirani1996}
\textsc{Tibshirani, R.} (1996).
Regression shrinkage and selection via the lasso. \textit{Journal of the Royal Statistical Society: Series B}
\textbf{58}, 267-288.
\MR{1379242}


\bibitem{Tropp}
\textsc{Tropp, J.} (2004).
Greed is good: Algorithmic results for sparse approximation. \textit{IEEE Transactions on Information Theory}
\textbf{50}, 2231-2242.
\MR{2097044}


\bibitem{vandeGeer2007}
\textsc{Van de Geer, S.} (2007).
The deterministic lasso. \textit{Proc. of Joint Statistical Meeting}


\bibitem{vandeGeerprediction}
\textsc{Van de Geer, S.} (2007).
High-dimensional generalized linear models and the Lasso. \textit{Annals of Statistics}
\textbf{36}, 614-645.
\MR{2396809 (2009h:62048)}


\bibitem{Wainwright2009}
\textsc{Wainwright, M.} (2009).
Sharp thresholds for noisy and high-dimensional recovery of sparsity using $l_1$-constrained quadratic programming (Lasso). \textit{IEEE Transactions on Information Theory}
\textbf{55}, 2183-2202.



\bibitem{ZhangHuang2008}
\textsc{Zhang, C.-H. and Huang J.} (2008).
The sparsity and bias of the lasso selection in high-dimensional linear regression. \textit{Annals of Statistics}
\textbf{36}, 1567-1594.
\MR{2435448 (2010h:62204)}


\bibitem{cuihuizhang2011}
\textsc{Zhang, C.-H. and Zhang, S. S.} (2011).
Confidence intervals for low-dimensional parameters in high-dimensional linear models.
arxiv.org/abs/1110.2563.



\bibitem{ZhaoYu2006}
\textsc{Zhao, P. and Yu, B.} (2006).
On Model Selection Consistency of Lasso. \textit{The Journal of Machine Learning Research}
\textbf{7}, 2541-2563.
\MR{2274449}

\bibitem{Zou2006}
\textsc{Zou, H.} (2006).
The adaptive lasso and its oracle properties. \textit{Journal of the American Statistical Association}
\textbf{101}, 1418-1429.
\MR{2279469 (2008d:62024)}


\bibitem{ZouHastie2005}
\textsc{Zou, H. and Hastie, T.} (2005).
Regularization and variable selection via the elastic net. \textit{Journal of the Royal Statistical Society: Series B}
\textbf{67}, 301-320.
\MR{2137327}


\end{thebibliography}
\end{document}